\documentclass[reqno,a4paper,11pt]{amsart}
 \usepackage{amssymb, mathtools, enumitem, xcolor, todonotes}
\usepackage[unicode,hidelinks]{hyperref}
 
\allowdisplaybreaks 
\renewcommand{\d}{{\rm d}}

 \newcommand{\eps}{\varepsilon}

 \newcommand{\Var}{{\rm Var}}
\newcommand{\Cov}{{\rm Cov}}

 \newcommand{\lip}{{\rm Lip}}

  \newcommand{\diag}{{\rm diag}}

 \newcommand{\R}{\mathbb{R}}
\newcommand{\N}{\mathbb{N}}
 \newcommand{\Z}{\mathbb{Z}}

 \newcommand{\E}{\mathbb{E}}
  
 \renewcommand{\P}{\mathbb{P}}
 \newcommand{\Cstern}{C^*}

\newcommand{\abs}[1]{\left\lvert #1 \right\rvert}

 \def\1{{\mathchoice {1\mskip-4mu\mathrm l} 
{1\mskip-4mu\mathrm l}
{1\mskip-4.5mu\mathrm l} {1\mskip-5mu\mathrm l}}}

\DeclareMathOperator{\Hess}{Hess}
\DeclareMathOperator{\sech}{sech}
\DeclareMathOperator{\D}{D}
\DeclareMathOperator{\artanh}{artanh}

\newtheorem{theorem}{Theorem}[section]
\newtheorem{lemma}[theorem]{Lemma}
\newtheorem{cor}[theorem]{Corollary}
\newtheorem{prop}[theorem]{Proposition}

\theoremstyle{remark}
\newtheorem{remark}[theorem]{Remark} 

\numberwithin{equation}{section}
 
\title{The infinite Block Spin Ising Model}

\author{Jonas Jalowy}
   \address{\footnotesize Jonas Jalowy: {\itshape Paderborn University, Warburger Str.~100, 33098 Paderborn, Germany}}
\email{jjalowy@math.upb.de}

\author{Isabel Lammers}
\address{\footnotesize Isabel Lammers: {\itshape Universit\"at M\"unster, Einsteinstrasse 62, M\"unster 48149, Germany}}
\email{isabel.lammers@uni-muenster.de}

\author{Matthias L\"owe}
\address{\footnotesize Matthias L\"owe: {\itshape Universit\"at M\"unster, Einsteinstrasse 62, M\"unster 48149, Germany}}
\email{matthias.loewe@uni-muenster.de}

\keywords{Ising model, Curie Weiss model, block spin model, phase transition, central limit theorem, law of large numbers}
	\subjclass[2020]{Primary: 60F05, 82B20}

\begin{document}
 \begin{abstract}
We study a block mean-field Ising model with $N$ spins split into $s_N$ blocks, with Curie–Weiss interaction within blocks and nearest-neighbor coupling between blocks. While previous models deal with the block magnetization for a fixed number of blocks, we study the the simultaneous limit $N\to\infty$ and $s_N\to\infty$. The model interpolates between Curie–Weiss model for $s_N=1$, multi-species mean field for fixed $s_N=s$, and the 1D Ising model for each spin in its own block at $s_N=N$.

Under mild growth conditions on $s_N$, we prove a law of large numbers and a multivariate CLT with covariance given by the lattice Green's function. For instance, the high temperature CLT essentially covers the optimal range up to $s_N=o(N/(\log N)^c)$ and the low temperature regime is new even for fixed number of blocks $s>2$. In addition to the standard competition between entropy and energy, a new obstacle in the proofs is a curse of dimensionality as $s_N \to \infty$.
\end{abstract}
\maketitle
\section{Introduction}
On the one hand, block spin models are fascinating from a statistical mechanics point of view, see e.g., 
\cite{GC08, FC11, FU12, Col14,LSblock1,KLSS20,JLS} as models of interacting mean-field systems. On the other hand, they were
rediscovered as models for social interactions between groups, see e.g.\ \cite{GBC09, KT20a, LSV20, Fleermann+Kirsch+Toth:2022}, thereby following a re-interpretation of mean-field models in a social choice context, see \cite{BrockDurlauf, KnLo07, CoLo10}, and recently have posed interesting statistical problems, see \cite{BRS19, LS20}.

The standard setup of block models is that $N$ spins are divided into $s$ blocks, and eventually we shall be interested in the thermodynamic limit $N \to \infty$. In general, the spins may take values in some set, but here we shall restrict ourselves to Ising spins $\sigma_i\in \{-1,+1\}$. All spins interact in a mean-field fashion with other spins in the same block with a certain strength, but the mean field interaction of spins in different blocks has a different strength. This can be encoded in an 
$s \times s$ matrix $A=(A_{i,j})_{i,j=1}^s$, where $A_{i,j}$
then describes the interaction strength between any two
particles in blocks $i$ and $j$. Many of the above models have two blocks, while the most general setup is chosen in \cite{JLS},  where the number of blocks is arbitrary (but fixed), the spins
may take a finite number of possible values (Potts model), and $A$ is an
arbitrary positive definite interaction matrix. 

In this note, we are interested in a new extension of the model, which allows the number of blocks $s=s_N$ also to increase as $N \to \infty$ and where the block interaction $A_{i,j}$ resembles an 1D nearest neighbor Ising model with periodic boundary, i.e.~$A_{i,j}=0$ if $|i-j|\not\in\{ 0,1,s_N-1\}$. In particular, this spin system serves as an interpolation between the Curie Weiss (mean field) model for $s_N=1$, existing block models from the literature for $s_N=2$ or other fixed $s_N=s\in\N$ and the Ising model for the extreme case $s_N=N$. One goal of this article is to close the gap between the study of finite $s$-dimensional magnetization vectors of earlier block models and the Ising model, where our magnetization vector would consist of the single spins. 

The model is close in spirit to so-called Kac-Ising models, see e.g.~\cite[\S 4.2]{Presutti}, \cite[Appendix C]{Thompson}, where each spin $\sigma_i$ may interact with a set of neighboring spins that crucially depends on the position $i$ of the spin $\sigma_i$. In contrast, this set is a fixed predetermined block in block spin systems. Moreover, in the classical Lebowitz–Penrose limit of Kac models, one first takes the thermodynamic limit $N\to\infty$, and then sends the interaction range to infinity.
In our model, these limits are taken simultaneously instead. This will result in a different behavior of our block models compared to Kac-Ising models. In particular, with the right choice of parameters, we will see phase transitions even in dimension one, while this is not the case for 1d-Kac-Ising models.

We will analyze this phase transition by
studying the vectors of block magnetization.
This block averaging procedure goes by the name of \emph{coarse graining} and is a standard analytical tool for approximations, e.g.~for the Kac-Ising model. However, in the situation studied in our note, the block magnetization vector $m$ forms an order parameter/sufficient statistic of the Hamiltonian, since the underlying microscopic block spin system is a priori structured into block variables. This allows us to derive explicit laws of large numbers (LLN) and central limit theorems (CLT) for the vector of block magnetizations, which are free from approximations and bridge the finite–block mean–field regime and the fully microscopic Ising chain.
For classical mean-field spin models (i.e.\ for models without an additional block structure), e.g.\ mean-field Ising models, such LLNs and CLTs for the 
magnetization have a long tradition. First results go back to Ellis and Newman
\cite{Ellis_Newman_78a, Ellis_Newman_78b, EllisNewman_80}. By now, extensions cover various directions such as explicit error bounds by Steins' method in \cite{Chatterjee_Shao} and \cite{EL10} and $p$-spin versions in \cite{MSB21,MSB25}.

\section{Overview}

\subsection{The Setting}
We consider a spin system of $N\in\N$ many Ising spins $\sigma_i \in \{-1, +1\}$. The index set $\{1, \ldots, N\}$ is partitioned into $s_N$ blocks that we denote by $S_1, \ldots, S_{s_N}$ of equal size $\frac{N}{s_N}$, where $s_N$ is a non-decreasing sequence of integers. We define the Hamiltonian of our model to be
$$
H_N(\sigma) = \frac{\beta}{2} \frac{s_N}{N} \sum_{k=1}^{s_N} \sum_{i,j \in S_k}\sigma_i\sigma_j + \frac{\alpha}{2} \frac{s_N}{N} \sum_{k=1}^{s_N} \sum_{\substack{i \in S_k \\ j \in S_{k-1}\cup S_{k+1}}}\sigma_i\sigma_j, \quad \beta>2\alpha>0,
$$
where here and throughout the entire paper we will identify $1 = s_N+1$ and $s_N = 1-1$ respectively. The associated Gibbs measure is given by
$$
\mu_{N, \beta , \alpha}(\d\sigma) = \frac{1}{Z_{N,\beta, \alpha}}\exp\left\{ H_N(\sigma) \right\}\rho(\d\sigma),
$$
where the reference measure $\rho=(\frac 1 2 \delta_{-1}+\frac 1 2 \delta_{+1})^{\otimes N}$ is the product measure of $N$ symmetric $\{+1,-1\}$-valued random variables and $Z_{N,\beta, \alpha} = \E[\exp\left\{H_N(\sigma)\right\}]$ denotes the partition function.
In other words, every spin interacts with every spin from its own block at strength $\beta >0$ and with all the spins from its two neighboring blocks at another strength $\alpha>0$. As usual, $\alpha,\beta$ can be seen as individual inverse temperatures, whereas the total inverse temperature of the whole system is given by $\beta+2\alpha$.

We are interested in the magnetization $m = (m_1, \ldots, m_{s_N})$ of the model, where we define the magnetization of the $k$'th block is
$$
m_k = \frac{s_N}{N} \sum_{ i \in S_k} \sigma_i,\qquad k=1,\dots,s_N.
$$
This is an order parameter (or, sufficient statistic), i.e.~the Hamiltonian can be rewritten in terms of $m$ via
$$
H_N(m)= \frac{1}{2} \frac{N}{s_N} m^TAm,
$$
where $m^T$ is the transpose of $m \in \R^{s_N}$ and the interaction matrix $A$ is the symmetric circulant matrix
$$
A =  \left[ {\begin{array}{cccccc}
    \beta & \alpha & 0& \cdots &0 & \alpha\\
    \alpha & \beta & \alpha & 0 &\cdots & 0\\
    0 & \alpha & \beta & \alpha & \ddots & \vdots \\
    \vdots & 0 & \ddots & \ddots & \ddots & 0\\
    0 & \vdots & \ddots & \alpha & \beta & \alpha \\
    \alpha & 0 & \cdots & 0 & \alpha & \beta\\
  \end{array} } \right] \in \R^{s_N\times s_N}.
$$
Consistently, $A =  \left[ {\begin{smallmatrix}
    \beta & 2\alpha\\
    2\alpha & \beta\\
  \end{smallmatrix} } \right]$ in the two block case $s_N=2$. The eigenvalues of $A$ are given by
$$
\lambda_j = \beta + 2 \alpha \cos \left( 2\pi \frac{j}{s_N}\right), \quad j = 1, \ldots ,s_N,
$$
implying that $A$ is positive definite if (and only if) $\beta >2\alpha$. 

\subsection{The main results}
We investigate the model for the existence of a phase transition for different choices of numbers of blocks $s_N$. 
When the number of spins per block $\frac{N}{s_N}$ goes to infinity, we prove uniform laws of large numbers in Theorem \ref{WLLN} and finite dimensional Central Limit Theorems in Theorem \ref{CLT} in different temperature regimes, showing the existence of a phase transition in these cases. Define $ m^*=m^*(\beta + 2\alpha)$ to be the largest solution to $$x=\tanh((\beta+2\alpha)x)$$
and $\overrightarrow m^*=(m^*,\dots)$ is the vector that carries the value $m^*$ in every coordinate.

\begin{theorem}[Uniform Law of Large Numbers]\label{WLLN}
Assume that $s_N = o\left(\frac{N}{\log N} \right)$.
\begin{enumerate} 
    \item If $ \beta + 2\alpha \le 1$ (high temperature phase), then for every $\eps >0$,
    $$
    \mu_{N, \beta, \alpha} \left( \sup_{k=1,\dots,s_N} |m_k| >\eps \right) \overset{N\to\infty}{\longrightarrow}0
    $$ 
    \item If $\beta + 2\alpha > 1$ (low temperature phase), then for every $\eps > 0$,
    $$
    \mu_{N, \beta, \alpha}\left(  \min\left\{ \sup_{k=1,\dots,s_N} |m_k - m^*|, \sup_{k=1,\dots,s_N} |m_k + m^*| \right\}>\eps \right) \overset{N\to \infty}{\longrightarrow}0.
    $$
In particular, this means that $\mu_{N,\beta, \alpha} \circ m^{-1} \Rightarrow \frac{1}{2}\delta_{\overrightarrow m^*} + \frac{1}{2}\delta_{-\overrightarrow m^*}$ since the measures $\mu_{N,\beta,\alpha}$ are invariant under a global spin flip.
\end{enumerate}
\end{theorem}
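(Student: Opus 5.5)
Throughout, write $n=N/s_N$ for the block size. The approach is a large deviation / energy–entropy comparison for the vector $m$, done uniformly in the dimension $s_N$. Under $\rho$ the blocks are independent and each $m_k$ takes at most $n+1$ values. Its law has the explicit binomial form $\rho(m_k=x)=\exp\{-n I(x)+O(\log n)\}$, where
$$I(x)=\tfrac{1+x}{2}\log(1+x)+\tfrac{1-x}{2}\log(1-x),$$
and the $O(\log n)$ is uniform in $x$. Hence
$$\mu_{N,\beta,\alpha}(m=x)=\frac{1}{Z_{N,\beta,\alpha}}\exp\Bigl\{-n F(x)+O(s_N\log n)\Bigr\},\qquad F(x)=\sum_{k=1}^{s_N}I(x_k)-\tfrac12 x^TAx .$$
The number of attainable vectors $x$ is $(n+1)^{s_N}=\exp\{O(s_N\log n)\}$. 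The hypothesis $s_N=o(N/\log N)$ means exactly $s_N\log n\le s_N\log N=o(N)$, so all these polynomial factors are $e^{o(N)}$. It therefore suffices to show
$$\inf_{x\in B_\eps}F(x)\;\ge\;\inf F+c(\eps)\,s_N$$
with $c(\eps)>0$ independent of $N$, where $B_\eps$ is the bad set in question. The exponential gain $e^{-c n s_N}=e^{-cN}$ then beats $e^{o(N)}$; the partition function is bounded below by the single term at the minimiser.

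The key step is to decouple $F$ into one-dimensional problems. Using $2x_kx_{k+1}\le x_k^2+x_{k+1}^2$ gives $x^TAx\le(\beta+2\alpha)\sum_k x_k^2$, so
$$F(x)\ge\sum_k f(x_k),\qquad f(y)=I(y)-\tfrac{\beta+2\alpha}{2}y^2 .$$
Equality holds for constant vectors $x=(y,\dots,y)$. Consequently $\inf F=s_N\inf f$, attained at $\pm\overrightarrow m^*$ (or at $0$ when $\beta+2\alpha\le1$). The deficit splits as
$$F(x)-\inf F=\sum_k\bigl(f(x_k)-\inf f\bigr)+\tfrac{\alpha}{2}\sum_k(x_k-x_{k+1})^2,$$
a sum of nonnegative terms. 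This is the Curie–Weiss function, whose minimisers are well known: $f$ is minimised exactly at $0$ in high temperature, and at $\pm m^*$ in low temperature with strict gap away from them.

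Here lies the main obstacle. A single coordinate with $|x_k-(\text{minimiser})|>\eps$ only costs $f(x_k)-\inf f\ge\delta(\eps)>0$, i.e.\ energy $n\delta=\delta N/s_N$. The union over $s_N$ coordinates is harmless: we pay a factor $s_N$ while gaining $e^{-\delta n}$, and $n/\log N\to\infty$. The real issue is the entropy of the other coordinates, which is not $e^{o(n)}$ but $e^{O(s_N\log n)}$. I therefore plan a \emph{relative} comparison rather than a global one. Fix the bad coordinate $k$ and condition on the rest, or more concretely compare $\mu(m_k=x_k, m_{\neq k}=x')$ with $\mu(m_k=y, m_{\neq k}=x')$ for $y$ the good value. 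The Hamiltonian only involves $x_k$ through $I(x_k)-\tfrac\beta2x_k^2-\alpha x_k(x_{k-1}+x_{k+1})$, so the conditional law of $m_k$ given its neighbours is a one-dimensional Curie–Weiss law with external field $h=\alpha(x_{k-1}+x_{k+1})$. It concentrates exponentially in $n$, at rate $e^{-cn}$ up to $\mathrm{poly}(n)$, around the solutions of $y=\tanh(\beta y+h)$.

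In high temperature this gives directly that $m_k$ lies near $\tanh(\beta m_k+\alpha(m_{k-1}+m_{k+1}))$ simultaneously for all $k$, with failure probability at most $s_N\,\mathrm{poly}(n)\,e^{-cn}\to0$. For the final step, note that any approximate fixed point of $x=\tanh(Ax)$ in $\ell^\infty$ must be near $0$. Indeed, take $k$ maximising $|x_k|$: then $|x_k|\le\tanh((\beta+2\alpha)|x_k|)+\eta$, which forces $|x_k|$ small when $\beta+2\alpha\le1$. At the boundary $\beta+2\alpha=1$ the gap is only quantitative, so I would track $\eta\to0$ slowly.

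In low temperature the same max/min argument forces every coordinate near $\{0,\pm m^*\}$ and each coordinate near $\pm m^*$. There is a subtlety here: the approximate fixed point equation allows mixed configurations, e.g.\ a domain wall between $+m^*$ and $-m^*$ blocks. To exclude these I would use the global bound. A configuration with $j$ sign changes has $F-\inf F\ge j\cdot c$, since $\sum_k(x_k-x_{k+1})^2\ge j(m^*)^2$ after smoothing near the walls. Such configurations thus carry weight $e^{-cjn}$. The number of domain wall placements is at most $\binom{s_N}{j}\le s_N^j$, and $s_N e^{-cn}\to0$. The fluctuation entropy around the two patterns is controlled by the local Gaussian (Hessian) bound $\nabla^2F\ge c\,\mathrm{Id}$ near $\pm\overrightarrow m^*$, which is uniform in $s_N$ because $I''(m^*)>\beta+2\alpha$. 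Summing over $j\ge1$ shows that mixed phases are negligible, which yields the claim, and global spin flip symmetry gives the equal weights $\frac12$.
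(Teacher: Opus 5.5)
Your part (i) is correct and takes a different route from the paper. Conditionally on the other blocks, $m_k$ is a one-block Curie--Weiss variable with inverse temperature $\beta<1$ and field $h=\alpha(m_{k-1}+m_{k+1})\in[-2\alpha,2\alpha]$. Its rate function $I(y)-\tfrac\beta2y^2-hy$ is $(1-\beta)$-strongly convex, so each block lies within $\eta$ of the root of $y=\tanh(\beta y+h)$ except with probability $\mathrm{poly}(n)e^{-(1-\beta)n\eta^2/2}$, uniformly in the neighbours. The union bound costs only a factor $s_N$, and your $\ell^\infty$ maximum argument finishes. On the critical line a fixed $\eta$ with $(1+\beta)\eta<\eps-\tanh\eps$ suffices. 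Since a single block carries only $n+1$ values, this avoids the entropy problem without the paper's stopping blocks $R_N,L_N$ and the cancellation of $\widetilde Z_N^{r,l}$.

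Part (ii), however, has a genuine gap. The max/min argument does not put every coordinate near $\{0,\pm m^*\}$. The maximum only gives $\max_k x_k\lesssim m^*$, and the minimum argument needs all coordinates to have one sign. Domain walls satisfy $x_k\approx\tanh(\beta x_k+\alpha(x_{k-1}+x_{k+1}))$ at \emph{every} block; for example, the minimiser of $F$ on a segment with boundary values $+m^*$ and $-m^*$ solves it exactly at all interior blocks. Moreover, at a sign change $x_k>0>x_{k+1}$ this equation together with $\max_j x_j\le m^*$ keeps $x_k$ a fixed distance below $m^*$. So conditional concentration can exclude neither walls nor the blocks far from $\pm m^*$ that sit on them.

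Your fallback to the global bound then reintroduces exactly the obstacle you identified earlier. The estimate $F-\inf F\ge cj$ is pointwise, whereas the event ``sign changes at the positions $W$'' contains $e^{\Theta(s_N\log n)}$ magnetisation vectors, and the Stirling factors alone already give slack $e^{O(s_N\log n)}$. For $j=2$ this swamps $e^{-2cn}$ as soon as $s_N^2\log n\gg N$, e.g.\ $s_N=\sqrt N$, which the hypothesis allows. The Hessian bound at $\pm\overrightarrow m^*$ describes fluctuations of configurations near $\pm\overrightarrow m^*$, not of configurations containing walls. It therefore does not give the ratio $\mu(\text{walls at }W)\le e^{-c|W|n}$ that you need.

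What is missing is a comparison that cancels the bulk entropy exactly and localises the loss. Flipping the segment between two sign changes (as with the paper's map $f_{l_1,l_2}$) leaves $\prod_k\rho(m_k)$ unchanged and gains $2\alpha n\sum_{\text{walls}}|m_km_{k+1}|$. This gain is of order $n$ only if the wall blocks are bounded away from $0$, i.e.\ only once one knows that every block is near $\pm m^*$; that is why the paper's sign lemma comes last. For that earlier step the paper:
\begin{itemize}
\item takes the nearest blocks $\widetilde R_N,\widetilde L_N$ within $\widetilde\delta$ of $\pm m^*$;
\item applies Cauchy--Schwarz only on the blocks between them;
\item freezes the boundary interaction at $m^*$, so that the partition function $\widetilde Z_N^{r,l}$ of the remaining chain cancels;
\item reduces $\mathcal A^{+-}_{r,l}$ to $\mathcal A^{++}_{r,l}$ by flipping.
\end{itemize}
The slack is then only $(N/s_N)^{O(r+l)}$, which is beaten by $e^{-c(r+l)N/s_N}$. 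Nothing in your proposal plays this localising role in the low-temperature regime.
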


On the other hand, if $s_N\asymp N$, then each block contains only finitely many spins. 
In this case, the model is a finite range spin system, behaving rather like an 1D Ising model with no phase transition. 

\begin{prop}\label{Ising}
    If $N/s_N\in\N$ is of fixed equal size, then, for any choice of parameters $\alpha>0$ and $\beta>0$ there is no phase transition. In particular, there is a unique infinite volume measure $\mu_m$ on $\R^\Z$ such that $$\mu_{N,\beta,\alpha}(m\in \cdot)\Rightarrow\mu_m(\cdot)$$
    (in the product topology).
            As a consequence, the total magnetization $\frac{1}{N}\sum_{i=1}^{N} \sigma_i$ converges to $0$.
\end{prop}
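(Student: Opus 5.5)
Let $L:=N/s_N\in\N$ denote the fixed block size. The plan is to view the model as a one-dimensional nearest-neighbour chain whose single-site variable is the block magnetization $m_k\in M_L:=\{-1,-1+\tfrac 2L,\dots,1\}$, a finite set. The first step is to integrate out the spins inside each block. Under $\rho$ the law of $m_k$ is a binomial-type weight $\nu_L$ on $M_L$, and the $m_k$ are i.i.d. Writing $H_N=\frac L2 m^TAm=\sum_k\big(\frac{L\beta}{2}m_k^2+L\alpha\, m_km_{k+1}\big)$ (with periodic indices), the law of $m$ under $\mu_{N,\beta,\alpha}$ is
\[
\frac{1}{Z}\prod_{k=1}^{s_N} T(m_k,m_{k+1}),\qquad T(x,y)=\sqrt{\nu_L(x)\nu_L(y)}\,e^{\frac{L\beta}{4}(x^2+y^2)+L\alpha xy},\quad x,y\in M_L .
\]
This is a periodic Markov chain, or 1D finite-state Gibbs measure, with transfer matrix $T$. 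The matrix $T$ is symmetric, of size $(L+1)\times(L+1)$ independent of $N$, and has strictly positive entries.

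The second step is the transfer matrix analysis. By Perron--Frobenius, $T$ has a simple largest eigenvalue $\lambda_0>0$ with a strictly positive eigenvector $\psi$, and a spectral gap $|\lambda_1|<\lambda_0$. For any fixed window $k_0,\dots,k_0+r$ and any function $f$ on $M_L^{r+1}$, we write
\[
\E_{\mu_N}[f]=\frac{\mathrm{tr}\big(T^{s_N-r}\,F\big)}{\mathrm{tr}(T^{s_N})},
\]
where $F$ encodes $f$ together with the $r$ transfer factors. Expanding $T^{n}=\lambda_0^n\psi\psi^T+O(|\lambda_1|^n)$, both traces converge as $s_N\to\infty$ to the expectation under the stationary Markov chain on $M_L^\Z$. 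That chain has transition kernel $P(x,y)=T(x,y)\psi(y)/(\lambda_0\psi(x))$ and stationary law $\pi=\psi^2$. Since the measure is invariant under rotation of indices, the limit $\mu_m$ does not depend on $k_0$, and we view it as a measure on $\R^\Z$ supported on $M_L^\Z$. Convergence of all cylinder expectations gives weak convergence in the product topology. Uniqueness holds because the limit is the same for every boundary condition: the limit of $T^n(a,b)/\lambda_0^n$ is $\psi(a)\psi(b)$, and the $\psi(a)$ factor cancels. Hence there is no phase transition, and this holds for every $\alpha,\beta>0$.

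The third step is the total magnetization. We have $\frac1N\sum_i\sigma_i=\frac1{s_N}\sum_k m_k$. By the spin-flip symmetry $T(-x,-y)=T(x,y)$, simplicity of $\lambda_0$ forces $\psi$ to be even, so $\E_{\pi}[m_0]=0$. Exponential decay of correlations gives
\[
|\mathrm{Cov}_{\mu_N}(m_k,m_l)|\le C\,(|\lambda_1|/\lambda_0)^{d(k,l)}+o(1)
\]
uniformly in $k,l$, where $d(k,l)$ is the cyclic distance. The $o(1)$ term comes from the finite-trace correction, which is itself exponentially small in $s_N$. Consequently
\[
\Var\Big(\tfrac1{s_N}\sum_k m_k\Big)=O(1/s_N),
\]
and Chebyshev's inequality gives convergence to $0$ in probability. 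The mean is exactly $0$ by symmetry.

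The main obstacle is bookkeeping rather than a conceptual difficulty. One must make the covariance bound uniform on the torus, handling both the cyclic distance and the error from $\mathrm{tr}(T^{s_N})$ versus $\lambda_0^{s_N}$. To do this we will write $T=\sum_j\lambda_j\phi_j\phi_j^T$ and bound the contribution of each non-leading eigenvalue explicitly by $(|\lambda_j|/\lambda_0)^{\min(d,s_N-d)}$.
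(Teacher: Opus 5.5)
Your proof is correct, but it takes a different route from the paper. The paper never analyses the chain directly. It checks the bounded-oscillation condition of Georgii's Theorem~8.39 for the nearest-neighbour block potential and deduces uniqueness of the Gibbs measure $\mu_m$. It then asserts that convergence of the finite-dimensional marginals follows from the DLR equations. Finally, it obtains the magnetization claim from three ingredients: ergodicity of the unique (hence extremal) Gibbs measure, the ergodic theorem, and spin-flip symmetry. This gives $\frac{1}{s_N}\sum_{k=1}^{s_N}m_k\to0$ almost surely under $\mu_m$.

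Your Perron--Frobenius analysis of the strictly positive symmetric matrix $T$ yields more:
\begin{itemize}
\item $\mu_m$ is identified explicitly as the stationary Markov chain with kernel $P$ and invariant law $\psi^2$.
\item Convergence of the periodic finite-volume laws is proved directly. The paper's DLR appeal tacitly needs that every subsequential limit of the periodic measures is Gibbs, plus compactness.
\item Correlations decay exponentially on the torus, uniformly in $N$.
\item The law of large numbers holds under $\mu_{N,\beta,\alpha}$ itself, with $\Var\big(\frac1N\sum_i\sigma_i\big)=O(1/s_N)$.
\end{itemize}
This finite-volume statement does not follow from the paper's almost sure statement under $\mu_m$ plus marginal convergence, because the average involves $s_N\to\infty$ coordinates. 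Conversely, the paper's route is shorter. It rests on a general one-dimensional uniqueness criterion (bounded interaction across each bond) and avoids any spectral computation.

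To make ``no phase transition'' airtight in your version, spell out the DLR step. For any Gibbs measure $\mu$ and local $f$ one has $\mu(f)=\int\gamma_\Lambda(f\mid\omega)\,\mu(\mathrm{d}\omega)$. Your computation gives $\gamma_\Lambda(f\mid\omega)\to\psi^TF\psi/\lambda_0^r$ uniformly in the two boundary values, since $M_L$ is finite and $\psi>0$. If you also want the paper's almost sure version, note that $P$ has strictly positive entries. The stationary chain is therefore mixing, and the ergodic theorem applies with $\E_\pi[m_0]=0$.
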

This result will be proved in the appendix.

\medskip
Our second main result identifies the scale of Gaussian fluctuations of the block magnetization at non-criticality with covariance being the lattice Green's function. 
\begin{theorem}[CLT in finite dimensional distribution]\label{CLT}

    \begin{enumerate}
        \item If $s_N = o\left( {N}{(\log N)^{-{5}/{2}}} \right)$ and $ \beta + 2\alpha < 1$, then the rescaled magnetization satisfies the Central Limit Theorem 
        $$  \mu_{N, \beta, \alpha} \left( \sqrt{\frac{N}{s_N}}m\in  \cdot\right)\Rightarrow \mathcal N(0,\Sigma)$$ 
      	as $N\to\infty$ in the sense of finite dimensional distributions. If $s_N\equiv s$ is constant for sufficiently large $N$, then the limiting Gaussian distribution on $\R^s$ has covariance $\Sigma=(I-A)^{-1}$, otherwise it lives on $\R^\N$ with $\Sigma$ being the limit in the sense of finite projections, that is
        $$
        \Sigma_{i,j} =\lim_{N\to\infty}(I-A)^{-1}_{i,j}
        = \frac{\big(\frac{(1-\beta)-\sqrt{(1-\beta)^2-4\alpha^2}}{2\alpha}\big)^{|i-j|}}{\sqrt{(1-\beta)^2-4\alpha^2}}, \quad i,j\in\N.
        $$
        \item If $s_N = o\left({\sqrt{N}}({\log N})^{-1} \right)$ and $ \beta + 2\alpha > 1$, then for $0 <\delta < 2m^* $, the rescaled magnetization satisfies the Central Limit Theorems
        $$
         \mu_{N, \beta, \alpha} \left( \sqrt{\frac{N}{s_N}}(m- \overrightarrow m^*)\in  \cdot \, \middle\vert \, m \in  B_{\delta \sqrt{s_N}}(\overrightarrow m^*)\right)\Rightarrow \mathcal N(0,\Sigma^*)
        $$
        as $N \to \infty$ and
        $$
        \mu_{N, \beta, \alpha} \left( \sqrt{\frac{N}{s_N}}(m+ \overrightarrow m^*)\in  \cdot \, \middle\vert \, m \in  B_{\delta \sqrt{s_N}}(-\overrightarrow m^*)\right)\Rightarrow \mathcal N(0,\Sigma^*),
        $$
        again in the sense of finite dimensional distributions and where (see \eqref{eq:Sigma*} below)
        $$
        \Sigma^*_{i,j}=\lim_{N\to\infty}(1-(m^*)^2) (I-A(1-(m^*)^2))^{-1}_{i,j}.
        $$ 
    \end{enumerate}
\end{theorem}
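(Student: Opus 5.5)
We follow the classical Ellis--Newman route, made quantitative in the dimension $s_N$. Write $n=N/s_N$ for the block size. Under $\rho$ the block magnetizations $m_1,\dots,m_{s_N}$ are i.i.d. The law of each $\sqrt n\, m_k$ is a normalized sum of $n$ Rademacher variables, with local probabilities
\[
\rho(m_k=x)\approx \sqrt{\tfrac{2}{\pi n}}\,e^{-nI(x)},
\]
where $I$ is the binary entropy rate function; this holds uniformly on $|x|\le 1-\eta$ by a local CLT/Stirling bound. Hence the density of $m$ under $\mu_{N,\beta,\alpha}$ is proportional to
\[
\exp\Big\{-n\Phi(m)\Big\}, \qquad \Phi(m)=\sum_k I(m_k)-\tfrac12 m^TAm .
\]
Everything reduces to a Laplace-type analysis of $\Phi$ on $[-1,1]^{s_N}$, with error terms multiplied $s_N$ times. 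This multiplication is the curse of dimensionality, and it is why we need $n$ to beat a power of $\log N$ (resp.\ $s_N$).

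\textbf{High temperature, $\beta+2\alpha<1$.} First localize. Since $I(x)\ge x^2/2$ and $\lambda_{\max}(A)=\beta+2\alpha<1$, we have $\Phi(m)\ge \frac{1-\beta-2\alpha}{2}|m|^2$, so $\Phi$ has a unique, strongly convex minimum at $0$. Comparing $\exp(-n\Phi)$ with the normalization then gives
\[
\mu\big(|m|^2>C s_N\log N/n\big)\to 0 .
\]
This uses a crude volume count of at most $(n+1)^{s_N}$ lattice points, which costs $e^{s_N\log N}$ against $e^{-n\cdot s_N\cdot \mathrm{gap}}$-type decay. Better, one can use coordinatewise tail bounds together with a union bound: on $\{|m_k|\le C\sqrt{\log N/n}\ \forall k\}$ we have an entropic exponential tilt controlling the rest. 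On this event, Taylor expand
\[
I(x)=\tfrac{x^2}{2}+O(x^4),
\]
so that $n\Phi(m)=\frac n2 m^T(I-A)m+R$ with
\[
|R|\le n\sum_k m_k^4\lesssim s_N(\log N)^2/n .
\]
Together with the local CLT errors, each of relative size $O(n^{-1/2}\log^{3/2} N)$ per coordinate and hence $s_N$-fold, this demands $s_N(\log N)^{5/2}=o(N/s_N\cdot\ldots)$. The balancing is exactly what fixes the hypothesis $s_N=o(N(\log N)^{-5/2})$, and I expect the precise accounting here to be the main technical obstacle. It follows that the law of $\sqrt n\,m$ is within $o(1)$ in total variation of $\mathcal N(0,(I-A)^{-1})$ on $\R^{s_N}$. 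Projecting onto finitely many coordinates gives the covariance $(I-A)^{-1}_{ij}$. For $s_N\to\infty$ we diagonalize the circulant: $(I-A)^{-1}_{ij}$ is the Riemann sum
\[
\frac1{s_N}\sum_j \frac{e^{2\pi i j(i'-j')/s_N}}{1-\beta-2\alpha\cos(2\pi j/s_N)} ,
\]
which converges to
\[
\frac1{2\pi}\int_0^{2\pi}\frac{e^{i\theta d}}{1-\beta-2\alpha\cos\theta}\,d\theta .
\]
A residue computation then gives $r^{|d|}/\sqrt{(1-\beta)^2-4\alpha^2}$, with $r$ the smaller root of $\alpha r^2-(1-\beta)r+\alpha=0$.

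\textbf{Low temperature, $\beta+2\alpha>1$.} Conditioned on $m\in B_{\delta\sqrt{s_N}}(\overrightarrow m^*)$, we use Theorem~\ref{WLLN} (and its proof) to localize further, to $\sup_k|m_k-m^*|\le C\sqrt{\log N/n}$. For this we need that $\overrightarrow m^*$ is the unique minimizer of $\Phi$ in the ball. Indeed, since $A\overrightarrow 1=(\beta+2\alpha)\overrightarrow 1$, the vector $\overrightarrow m^*$ is a critical point, and its Hessian
\[
\mathrm{diag}\big(I''(m^*)\big)-A=\frac{1}{1-(m^*)^2}I-A
\]
is positive definite, because $(\beta+2\alpha)(1-(m^*)^2)<1$. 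Uniqueness within the ball of radius $\delta\sqrt{s_N}$ must be argued beyond local convexity: the ball allows $\Theta(1)$ deviations in many coordinates. Here I would reduce to a one-dimensional comparison using
\[
m^TAm\le \sum_k\big(\beta m_k^2+\alpha(m_k^2+m_{k+1}^2)\big),
\]
so that $\Phi(m)\ge\sum_k \phi(m_k)$, where $\phi(x)=I(x)-\frac{\beta+2\alpha}2x^2$ is the Curie--Weiss function. This bound is tight only at constant vectors, and the ball excludes the $-m^*$ well. Then expand around $m^*$. The cubic term $I'''(m^*)\ne0$ now survives, giving a remainder
\[
n\sum_k|m_k-m^*|^3\lesssim s_N(\log N)^{3/2}/\sqrt n .
\]
This must be $o(1)$, i.e.\ $s_N^{3/2}(\log N)^{3/2}=o(\sqrt N)$, roughly $s_N=o(\sqrt N/\log N)$ up to the log bookkeeping. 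This stronger restriction, with the odd term not cancelling, is the key difficulty in this regime. The resulting Gaussian has covariance
\[
\Big(\tfrac{1}{1-(m^*)^2}I-A\Big)^{-1}=(1-(m^*)^2)\big(I-(1-(m^*)^2)A\big)^{-1},
\]
whose finite-dimensional limit is computed by the same Fourier/residue argument. The case $-\overrightarrow m^*$ follows by spin-flip symmetry.
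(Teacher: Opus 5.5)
\textbf{There is a genuine gap, and it sits at the core step of both parts.} You try to approximate the \emph{entire} $s_N$-dimensional law of $\sqrt n\,m$ by a Gaussian on $\R^{s_N}$, bounding the non-Gaussian remainder uniformly over all coordinates. Your own bookkeeping does not yield the stated hypotheses.

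In high temperature, $|R|\lesssim n\sum_k m_k^4\lesssim s_N(\log N)^2/n=s_N^2(\log N)^2/N$. This is $o(1)$ only for $s_N=o(\sqrt N/\log N)$. The step ``this demands $s_N(\log N)^{5/2}=o(N/s_N\cdot\ldots)$'' does not follow from anything you wrote.

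In low temperature, $n\sum_k|m_k-m^*|^3\lesssim s_N(\log N)^{3/2}/\sqrt n=s_N^{3/2}(\log N)^{3/2}/\sqrt N$. This is $o(1)$ only for $s_N=o(N^{1/3}/\log N)$, not $o(\sqrt N/\log N)$; that step is an arithmetic slip.

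More fundamentally, your intermediate claim (total-variation closeness on $\R^{s_N}$) is false in much of the range of part (i). Each block's law deviates from Gaussian at relative order $1/n$; for instance, its fourth cumulant is of order $1/n$. These deviations accumulate over $s_N$ weakly dependent blocks: the statistic $\sum_k(\sqrt n m_k)^4$ has a mean shift of order $s_N/n=s_N^2/N$ but fluctuations only of order $\sqrt{s_N}$. So already for weak coupling the $s_N$-dimensional laws become asymptotically singular, even after smoothing the lattice, once $s_N\gg N^{2/3}$. No global Laplace approximation on $\R^{s_N}$ can reach $s_N=o(N(\log N)^{-5/2})$. You must exploit that only finite-dimensional marginals are claimed.

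\textbf{This is exactly what the paper's proof is built around.} After the Hubbard--Stratonovich transform and a linear change of variables, the law of $Z+\sqrt n\,m$ becomes strongly log-concave ($\Hess U\succeq I$). The moment generating function at a finitely supported $t$ is then a ratio of integrals in which the non-Gaussian term $\frac N{s_N}\sum_k G(\cdot)$ appears in both numerator and denominator. After shifting by $(A-A^2)^{-1/2}t$, only its \emph{increment} $\sqrt{N/s_N}\sum_k G'(\xi_k)\big((I-A)^{-1}t\big)_k$ matters, never its size.

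Exponential decay of $(I-A)^{-1}$ and of $C=A^{1/2}(I-A)^{-1/2}$ localizes this increment to $O(\log N)$ coordinates. Subgaussian concentration for the log-concave law then lets one restrict only the projection onto these coordinates to radius $O(\sqrt{\log N})$. This gives the error $O\big(\frac{s_N}{N}(\log N)^{5/2}\big)$, which is where the hypothesis of (i) comes from.

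In (ii) the same device controls the local term under the mild condition $s_N=o(N/(\log N)^4)$. The condition $s_N=o(\sqrt N/\log N)$ enters only through the crude bound on the contribution away from $\pm\overrightarrow m^*$: an energy gap $\rho N/s_N$ set against an $e^{\mathcal O(\sqrt N)}$ prefactor.

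\textbf{Your uniqueness argument in (ii) is also incomplete.} The bound $\Phi(m)\ge\sum_k\phi(m_k)$ attains its minimum $s_N\phi(m^*)$ at every point of $\{\pm m^*\}^{s_N}$. So it gives no separation from sign-mixed configurations, many of which lie in $B_{\delta\sqrt{s_N}}(\overrightarrow m^*)$. The separation comes entirely from the discarded wall energy, which is only $O(1)$ per wall in $\Phi$. You therefore need a quantitative domain-wall gap together with an entropy count, as in the flipping argument of Lemmas~\ref{SameSign} and~\ref{lem:minimal}.
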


\begin{remark}
Naturally, the CLT implies a weak LLN, but only for finite dimensional marginals. The uniform LLN in Theorem \ref{WLLN} however holds for the supremum norm of $m$.
\end{remark} 

\subsection{Discussion}
	Let us comment on how one may view the model and results like Theorem \ref{CLT} as an interpolation between related results in the literature.
		\begin{itemize}
			\item $s_N=1$ corresponds to the Curie Weiss Model with a well known CLT of the magnetization $\sqrt N m\Rightarrow\mathcal N(0,(1-\beta)^{-1}) $ on $\R$, see \cite{Ellis_Newman_78a}.
			\item $s_N=2$ corresponds to the bipartite model introduced in \cite{GC08} or the two group Curie-Weiss model of \cite{KT20a}, see \cite{LSblock1} for the CLT for the rescaled magnetization vector $\sqrt N(m_1,m_2)$ on $\R^2$. We refer to \cite{FU12} for the limiting pressure per particle and see \cite{Col14} for the study of the dynamics. In the spirit of \cite{BrockDurlauf, KnLo07} and \cite{CoLo10}, \cite{GBC09} introduces and interprets the two block model as a social choice model. In \cite{ LSV20}, the authors combine a block spin model with a stochastic block model and study it in the context of social choice. 
			\item fixed $s_N\in\N$ corresponds to a finite dimensional Gaussian Limit, see \cite{KLSS20} and see \cite{FC11} for a CLT for high and low temperature (without giving the exact limit points of the magnetization vector). This case also corresponds to the multigroup Curie-Weiss model from \cite{KT22} where the authors prove LLN and CLT and is generalized in \cite{Fleermann+Kirsch+Toth:2022} to local limit theorems. However, note that in the low temperature case, the authors in \cite{KT22} need that $s_N=2$ or that the interaction matrix has identical entries. Extensions to the Block Potts model have been studied in \cite{kim2potts,munpotts,JLS}.
			\item $s_N\to\infty$ yields a system (or, stochastic process) on $\R^\N$. If $N/s_N\to\infty$, then the blocks renormalize the discrete model into a continuous Gaussian spin model, cf. Theorem \ref{CLT}.
			\item For fixed $N/s_N\in\N$ spins per block, the discrete finite range spin model $\mu_{N,\beta,\alpha}$ gives rise to a nearest neighbor model $\mu_{N,\beta,\alpha}(m\in\cdot)$ with a unique limit, see Proposition \ref{Ising}. 
			\item Finally, $s_N=N$  corresponds to the nearest neighbor 1D Ising model at inverse temperature $\alpha$ with spin configuration $m=(m_k)_{k\in\N}=(\sigma_k)_{k\in\N}$, see \cite{Ising25} or \cite{Velenik_book}.
		\end{itemize}
	Note that for $N/s_N \to \infty$, i.e.\ when there is an increasing number of spins per block, our model falls into the category of models analyzed in \cite{basak_muk} and \cite{deb_mukherjee}. There, the authors show universality results for a broad class of spin models on graphs (in \cite{basak_muk} these spin models can be of Ising or Potts type with ferromagnetic and anti-ferromagnetic interaction). They show that if the average degree in such models grows quickly enough, then these models exhibit universal behavior. This concerns, among others, the \textit{global} magnetization. Since our model is much more specific, we may analyze (with the exception of Proposition \ref{Ising}) the vector of block magnetizations, a quantity that would not make sense in the general setup of \cite{basak_muk} and \cite{deb_mukherjee}. Hence, though our model falls into their general framework, our specific choice opens the field for a much more granular analysis, such as the description of correlations between blocks.

    In the extreme case $\alpha=0$ we have independent Curie Weiss Models and the limit distribution on the configuration space becomes a trivial product measure, e.g.~Theorem \ref{CLT}\,(i) remains true for all $s_N=o(N)$, since $\sqrt{N/s_N}m\Rightarrow \mathcal N(0,(1-\beta)^{-1})^{\otimes \N}$. The CLT for the magnetization hold for any choice of $s_N$, which is the setting of classical CLTs for sums $ \sqrt{N}\sum_{k=1}^{s_N}m_k$ of independent random variables $m_k$ whose variance depends on the choice of $\beta,s_N$.

   For Theorem 2.3\,(ii), the assumption $\alpha>0$ is necessary as independent blocks may have any combination of signs of $\pm m^*$ without interaction. On the other hand, the assumption $s_N = o({\sqrt{N}}({\log N})^{-1} )$ is technical only and could be weakened. Yet, our stronger assumption still conveys the same message of the result and in order not to unnecessarily overcomplicate the proof, we confine ourselves with a simpler step in Lemma \ref{lem:minimal} and after.

 \begin{figure}[h]
		\centering
        \includegraphics[width=\linewidth,trim={0 0 0 110},clip]{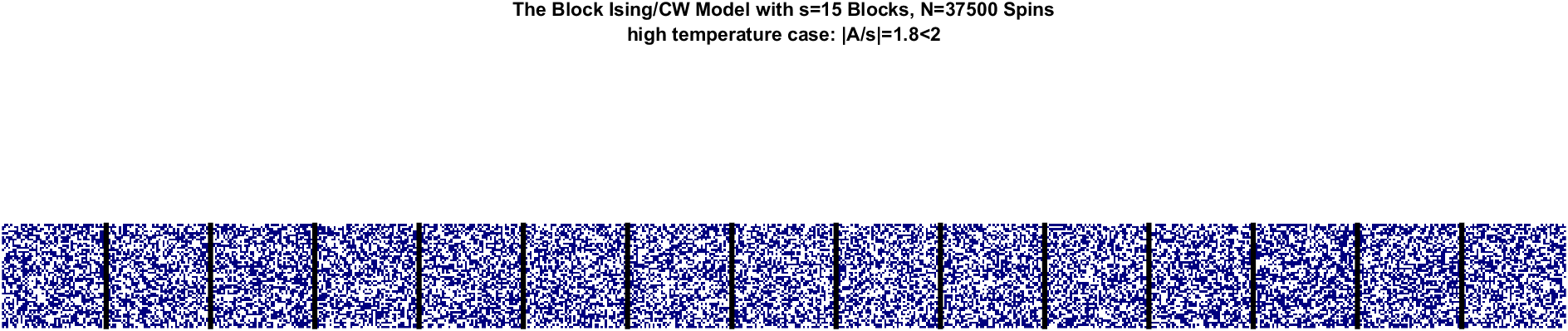}\\
		\includegraphics[width=\linewidth,trim={0 0 0 110},clip]{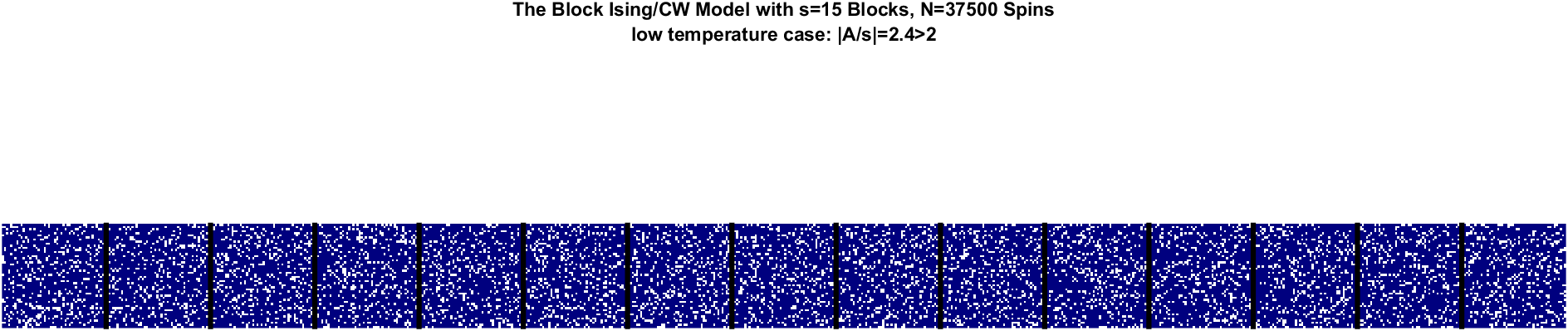}\\
        \includegraphics[width=\linewidth,trim={0 0 0 110},clip]{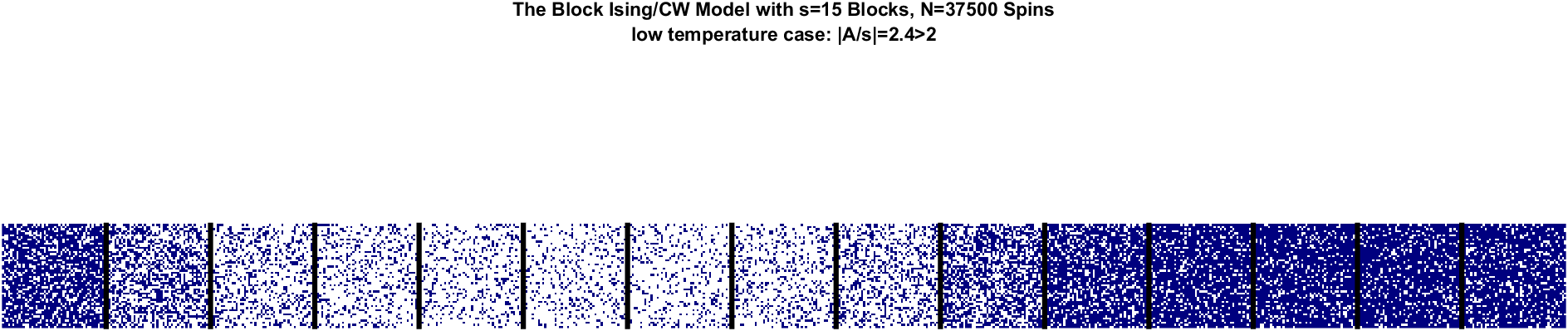}
		\caption{Three different simulations of the block spin Curie Weiss model for $s_N=15$ blocks with $N/15=2500$ spins in each block (each of which itself is a mean field model). The high temperature regime in the top row ($\beta=0.5, \alpha=0.2$) shows zero magnetization, whereas the low temperature regime in the center does magnetize ($\beta=0.8,\alpha=0.2$). The bottom row shows an unstable (improbable) realization in the low temperature regime, which visualizes the correlation of (cyclically) neighboring block interaction.}
		\label{fig:gibt_doch_nur_eine}
	\end{figure}

\subsection{Structure of the paper} In the remainder of the paper, we prove our main results. We begin with the \emph{high temperature regime} ($\beta+2\alpha<1$) both for clarity and because the arguments are cleaner there. In all proofs we will encounter a curse of dimensionality from $s_N\to\infty$. The general difficulty of the proofs is to localize the contribution of the magnetization vector to lower (logarithmic) dimensions, for which we make use of the fast correlation decay between far away blocks of our model.

In Section 3 we prove the law of large numbers Theorem \ref{WLLN}\,(i), via a decoupling of the block interactions. Then, the competition between entropy and energy can be controlled, so we can prove the result with a counting argument of the relevant configurations and the Laplace principle.

In order to apply Laplace method for the multivariate Central Limit Theorem, we introduce the foundations of the Hubbard--Stratonovich transform in Section 4. The proof of the high temperature CLT Theorem \ref{CLT}\,(i) is presented in section 5, where the growing dimension will cause significant problems for Laplace method. Note in particular that a Gaussian approximation in high dimension $s_N\to\infty$ will concentrate on shells at $\approx \sqrt {s_N}$, so volume effects compete with typical bounds. We control such effects with sub-Gaussian concentration inequalities and the decay of the off diagonal entries of $(I-A)^{-1}$ (more on the idea of the proof can be found in Section 5).

The \emph{low temperature regime} ($\beta+2\alpha>1$) of Theorems \ref{WLLN}\,(ii) and \ref{CLT}\,(ii) is treated in Sections 6 and 7. We expand around the two uniform symmetry-broken states and follow similar steps of previous proofs, only being technically more involved. The Appendix contains the standard proof of the finite range model of Proposition \ref{Ising}.

\section{Proof of Theorem \ref{WLLN}\,(i)}
First, we will prove the law of large numbers in the high temperature regime. Assume for the whole section that $\beta + 2\alpha \le 1$. The first step is to show that under the Gibbs measure the magnetization of the first block converges to zero in probability as $N \to \infty$. Then the statement of the theorem follows by a union bound.

\begin{prop}\label{m1}
    Assume that $s_N = o\left( \frac{N}{\log N} \right)$ and let $\eps>0$. If $\beta + 2\alpha <1$, then there exists a constant $c_{\beta,\alpha}>0$ such that
    \begin{equation}
        \mu_{N,\beta,\alpha}\left(|m_1|>\eps\right) =  \mathcal{O}\left( \exp\left\{- \frac{N}{s_N} c_{\beta,\alpha} \eps^2\right\} \right).
    \end{equation}
    If $\beta + 2\alpha = 1$, then
    \begin{equation}
        \mu_{N,\beta,\alpha}\left(|m_1|>\eps\right) =  \mathcal{O}\left( \exp\left\{- \frac{N}{s_N} \frac{1}{48} \eps^4\right\} \right).
    \end{equation}
\end{prop}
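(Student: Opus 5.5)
The plan is to avoid controlling the full $s_N$-dimensional vector $m$. Instead I would isolate block $1$ from the rest of the chain by decoupling its two interaction terms with the elementary inequality $2xy\le x^2+y^2$. Write $n:=N/s_N$ and split the Hamiltonian as
\[
H_N(m)=\tfrac{n}{2}\big(\beta m_1^2+2\alpha m_1(m_2+m_{s_N})\big)+H^{\mathrm{rest}}(m_2,\dots,m_{s_N}),
\]
where $H^{\mathrm{rest}}$ contains only the blocks $2,\dots,s_N$ and the bonds among them. The first piece is a Curie--Weiss model for block $1$ with a random external field $\alpha(m_2+m_{s_N})$.

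For the denominator I would use Jensen's inequality over the spins of block $1$ alone. Since $\E_\rho[m_1]=0$, the cross terms vanish in the mean, and
\[
Z_{N,\beta,\alpha}\ \ge\ \E_\rho\big[e^{H^{\mathrm{rest}}}\big]=:Z^{\mathrm{rest}} .
\]
For the numerator I would bound $\alpha m_1(m_2+m_{s_N})\le \alpha m_1^2+\frac{\alpha}{2}(m_2^2+m_{s_N}^2)$. This factorizes the numerator into
\[
\E_\rho\Big[\1_{\{|m_1|>\eps\}}\,e^{\frac{n}{2}(\beta+2\alpha)m_1^2}\Big]\cdot \E_\rho\Big[e^{H^{\mathrm{rest}}+\frac{n\alpha}{2}(m_2^2+m_{s_N}^2)}\Big].
\]

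The first factor is a one-block Curie--Weiss estimate with $\gamma:=\beta+2\alpha\le 1$. Since $m_1$ takes at most $n+1$ values, Cramér's bound $\rho(m_1=x)\le e^{-nI(x)}$ applies, with
\[
I(x)=\tfrac{1+x}{2}\log(1+x)+\tfrac{1-x}{2}\log(1-x)\ \ge\ \tfrac{x^2}{2}+\tfrac{x^4}{12}.
\]
This gives the bound $(n+1)\exp\{-n\inf_{|x|>\eps}(I(x)-\gamma x^2/2)\}$. The infimum is at least $(1-\gamma)\eps^2/2$ when $\gamma<1$, and at least $\eps^4/12$ when $\gamma=1$. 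The prefactor $n+1$ and the constants are then absorbed into $c_{\beta,\alpha}$, respectively into the weaker constant $\frac1{48}$, for $n$ large. Note that $n\to\infty$ holds since $s_N=o(N/\log N)$.

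The main obstacle is the remaining ratio $\E_\rho[e^{H^{\mathrm{rest}}+\frac{n\alpha}{2}(m_2^2+m_{s_N}^2)}]/Z^{\mathrm{rest}}$. This is an exponential moment of $\frac{n\alpha}{2}(m_2^2+m_{s_N}^2)$ under the Gibbs measure of the open chain $2,\dots,s_N$, and I need it to be at most $e^{o(n\eps^2)}$ (or $e^{o(n\eps^4)}$ at criticality). It is important not to decouple all blocks simultaneously, since that produces a factor like $(1-\gamma)^{-s_N/2}$, which is the curse of dimensionality. So I would treat only the two neighbours $2$ and $s_N$. Applying the same decoupling to them costs one more application of the inequality to their outer bonds, and a Hölder step then separates blocks $2$ and $s_N$. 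What remains is a Curie--Weiss exponential moment at effective temperature $\beta+2\alpha+\alpha/2$ or similar. This could exceed $1$; in that case one uses only the trivial bound $e^{n\cdot O(1)}$, which is too weak. I would therefore first try to run the decoupling with unequal weights, $2xy\le t x^2+t^{-1}y^2$. The idea is to shift weight toward block $1$, which has the slack $\eps^2$, while keeping every block at effective temperature at most $1$. Any polynomial-in-$n$ losses from a bounded number of neighbours are harmless because $\log n=o(n\eps^2)$.

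The union bound over $s_N$ blocks is not needed for this proposition. It is postponed to Theorem~\ref{WLLN}\,(i), where $s_N e^{-cn\eps^2}\to0$ follows from $s_N=o(N/\log N)$.
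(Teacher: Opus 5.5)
Your reduction is correct as far as it goes. The Jensen bound $Z_{N,\beta,\alpha}\ge Z^{\mathrm{rest}}$, the decoupling of block $1$, and the one-block estimate via $I(x)\ge\frac{x^2}{2}+\frac{x^4}{12}$ are all fine. The gap is exactly the step you flag as the main obstacle, and the remedy you sketch cannot close it.

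Decoupling blocks $2$ and $s_N$ from their inner neighbours needs no H\"older step, because they are the two ends of the open chain $2,\dots,s_N$. Block $2$ then sits at effective temperature $\beta+\alpha+\alpha=\beta+2\alpha<1$, so temperature is not the issue. The issue is that the bond $(2,3)$ creates a new residual $\frac{n\alpha}{2}(m_3^2+m_{s_N-1}^2)$ on the chain $3,\dots,s_N-1$, of exactly the same form as before. If $E_k$ denotes the corresponding exponential moment, you only get $E_2\le c_n^2E_3$ with $c_n=\E_\rho[e^{\frac n2(\beta+2\alpha)m^2}]/\E_\rho[e^{\frac n2\beta m^2}]\to\sqrt{(1-\beta)/(1-\beta-2\alpha)}>1$ (even worse with the cruder bound $Z\ge Z^{\mathrm{rest}}$). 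The recursion only ends when the chain is exhausted. That is precisely the factor $C^{s_N}$ you wanted to avoid, and it is affordable only if $s_N=o(n\eps^2)$, i.e.\ roughly $s_N=o(\sqrt N)$.

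Unequal weights do not help either:
\begin{itemize}
\item block $1$ can only take weights with $\beta+2\alpha t<1$, since the event $\{|m_1|>\eps\}$ provides no slack in temperature;
\item every admissible weight satisfies $t_k<(1-\beta)/\alpha$, so the residual passed on never drops below $\frac{n\alpha^2}{2(1-\beta)}m_{k+1}^2$;
\item since $\sum_k(t_{k-1}^{-1}+t_k)$ equals $\sum_k(t_k+t_k^{-1})\ge 2K$ up to boundary terms, convexity shows the accumulated loss over $K$ steps is still exponential in $K$.
\end{itemize}
The bound you need is plausible: the open-chain interaction matrix with the two boosted end blocks has all row sums equal to $\beta+2\alpha$, so at the Gaussian level the ratio is $O(1)$. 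But proving it uniformly in the growing dimension requires a genuinely different tool, and on the critical line that matrix has eigenvalue exactly $1$.

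The missing idea, which is how the paper argues, is to stop the cascade at a block where the residual is deterministically small.
\begin{enumerate}
\item One first proves $\mu_{N,\beta,\alpha}(|m_k|>\delta\ \forall k)\le e^{-(\frac{1-\beta-2\alpha}{2}-o(1))N\delta^2}$. Full decoupling is affordable here, because its cost $(N/s_N)^{s_N}=e^{o(N)}$ competes with a rate of order $N$ rather than $N/s_N$.
\item On $\{R_N=r,L_N=l\}$, let $r$ and $s_N-l$ be the first blocks on either side of block $1$ with $|m_k|\le\delta$. One decouples only the visited blocks. The leftover terms at $m_r$ and $m_{s_N-l}$ cost at most $e^{n\alpha\delta^2}$.
\item The partition function of the chain $r,\dots,s_N-l$ with zero boundary condition cancels exactly against a lower bound for $Z_{N,\beta,\alpha}$, obtained by fixing $m_k=0$ on the visited blocks.
\item Each visited block other than block $1$ then contributes $O(n^{3/2})e^{-n(1-\beta-2\alpha)\delta^2/2}\ll 1$. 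Hence the sum over $(r,l)$ is at most $s_N^2\,\mathrm{poly}(n)\,e^{-n(\frac{1-\beta-2\alpha}{2}\eps^2-\alpha\delta^2)}$.
\end{enumerate}
Choosing $\delta$ small relative to $\eps$, and using $s_N=o(N/\log N)$ to absorb $s_N^2$, gives the claim. On the critical line the same argument runs with $\delta^4,\eps^4$ in place of $\delta^2,\eps^2$.
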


 The idea of the proof is to approximate the spin system by independent Curie-Weiss models on the individual blocks. Because of increasing dimension, the error of decoupling all of the blocks by using a Cauchy-Schwarz bound 
  \begin{equation}\label{CauchySchwarz}
 m_km_{k+1}\le \frac{m_k^2}{2}+ \frac{m_{k+1}^2}{2}
 \end{equation}
  on the interaction terms is too large compared to the speed of convergence. Therefore, we first need to reduce the number of relevant blocks. This can be achieved by only applying the bound until we find some index $k$ corresponding to a block of sufficiently small magnetization. Blocks with a small magnetization also have small interactions allowing us to simply forget about it and in this way decoupling those blocks from everything that comes beyond while making a significantly reduced error. To apply this strategy, we need to ensure that there will actually appear such blocks that carry a small magnetization. Indeed, it is not hard to see that the probability that all block magnetizations are uniformly bounded from below by a constant goes to zero exponentially fast (in the number of spins $N$). More precisely, we will first prove the following lemma.
 
\begin{lemma}
Let $ 0<  \delta <1$. For $\beta + 2\alpha < 1$ and $s_N = o(N)$
\begin{equation}\label{boundHigh}
    \mu_{N,\beta,\alpha}\left( |m_1|, \ldots, |m_{s_N}| >  \delta \right)  \le \exp\left\{ - \left(\frac{1-(\beta + 2\alpha)}{2} - o(1) \right)N \delta^2 \right\} =o(1).
\end{equation}
Similarly, at the critical temperature $\beta + 2\alpha =1$, we obtain the upper bound
\begin{equation}\label{boundCritical}
\mu_{N,\beta,\alpha}\left( |m_1|, \ldots, |m_{s_N}| >  \delta \right) \le \exp\left\{ -\left(\frac{1}{12} - o(1) \right)N \delta^4 \right\}.
\end{equation}
   
\end{lemma}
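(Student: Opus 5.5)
The plan is to decouple all blocks at once using the largest eigenvalue of $A$, then bound each block's contribution by an elementary large-deviation estimate for binomial sums. Here decoupling costs nothing, because the event forces every block to be large.

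\emph{Partition function.} First I bound the denominator from below. By Jensen's inequality, $Z_{N,\beta,\alpha}=\E[\exp\{H_N(\sigma)\}]\ge \exp\{\E[H_N(\sigma)]\}$. Under $\rho$ we have $\E[\sigma_i\sigma_j]=\1_{\{i=j\}}$. Hence $\E[H_N]=\frac{\beta}{2}\frac{s_N}{N}\cdot N\ge 0$, and so $Z_{N,\beta,\alpha}\ge 1$.

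\emph{Decoupling the energy.} Write $n=N/s_N$ and $\gamma=\beta+2\alpha$. Since $A$ is symmetric with largest eigenvalue $\max_j\lambda_j=\beta+2\alpha$, we get $H_N(m)=\frac n2 m^TAm\le \frac{n}{2}\gamma\sum_{k=1}^{s_N}m_k^2$. This is the global version of \eqref{CauchySchwarz}. The blocks are independent under $\rho$, so
$$
\mu_{N,\beta,\alpha}\left(|m_1|,\dots,|m_{s_N}|>\delta\right)\le \prod_{k=1}^{s_N}\E_\rho\Big[e^{\frac n2\gamma m_k^2}\1_{\{|m_k|>\delta\}}\Big].
$$

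\emph{Single-block estimate.} Each $m_k$ is the average of $n$ independent symmetric signs, so it takes at most $n+1$ values $x$. For each such $x$ the standard Chernoff/Stirling bound gives $\rho(m_k=x)\le e^{-nI(x)}$, where
$$
I(x)=\tfrac{1+x}{2}\log(1+x)+\tfrac{1-x}{2}\log(1-x).
$$
Taylor expansion of $I$ with nonnegative coefficients gives $I(x)\ge \frac{x^2}{2}+\frac{x^4}{12}$. Consequently, for $\gamma<1$ the exponent satisfies $I(x)-\frac\gamma2x^2\ge \frac{1-\gamma}{2}x^2\ge\frac{1-\gamma}{2}\delta^2$ on $\{|x|>\delta\}$. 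For $\gamma=1$ it satisfies $I(x)-\frac12x^2\ge \frac{x^4}{12}\ge\frac{\delta^4}{12}$ there. Thus each factor is at most $(n+1)e^{-n(1-\gamma)\delta^2/2}$, respectively $(n+1)e^{-n\delta^4/12}$.

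\emph{Conclusion.} Multiplying over the $s_N$ blocks yields the bound $(N/s_N+1)^{s_N}\exp\{-N(1-\gamma)\delta^2/2\}$, respectively the analogous bound with $N\delta^4/12$. The only point needing care is the polynomial counting factor accumulated over growing dimension. This is where $s_N=o(N)$ enters: it gives $s_N\log(N/s_N+1)=o(N)$, because $x\log(1/x+1)\to 0$ as $x\to0$. This factor is therefore absorbed into the $o(1)$ in the exponent, giving \eqref{boundHigh} and \eqref{boundCritical}.
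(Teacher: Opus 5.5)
Your proposal is correct and follows the paper's proof. Both arguments use Jensen to get $Z_{N,\beta,\alpha}\ge 1$, then decouple all blocks via $m^TAm\le(\beta+2\alpha)\|m\|^2$, which is equivalent to applying the termwise bound \eqref{CauchySchwarz}; this factorizes the bound into $s_N$ independent Curie--Weiss blocks, and the per-block counting factor is absorbed using $s_N=o(N)$. Your justification is in fact slightly cleaner than the paper's on two points: the global inequality $I(x)\ge \frac{x^2}{2}+\frac{x^4}{12}$ (from nonnegative Taylor coefficients) replaces the paper's expansion near $0$, and counting $N/s_N+1$ values per block replaces the paper's factor $(\frac{N}{s_N}\delta)^{s_N}$.
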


\begin{proof} First note that rewriting the partition function in terms of the magnetization yields
$$
    Z_{N,\beta, \alpha} = \sum_{m_1, \ldots, m_{s_N}} \exp \left\{\frac{1}{2}\frac{N}{s_N}\sum_{k=1}^{s_N} \beta m_k^2 + 2 \alpha m_k m_{k+1} \right\} \frac{1}{2^N} \prod_{k=1}^{s_N} { \frac{N}{s_N} \choose \frac{N}{s_N}\frac{1+m_k}{2}},
$$
where the $m_k$ take values in the set $\mathcal{A}_N = \left\{ -1+ 2k\frac{s_N}{N} \, : \, k=0,\ldots, \frac{N}{s_N}\right\}$. Then by Jensens' inequality, we obtain that
$$
Z_{N,\beta,\alpha} = \E[ \, \exp\{ H_N(m) \}\, ] \ge \exp\{ \, \E[H_N(m)] \, \} \ge 1
$$
where the last step is true since $H_N(m) = \frac{N}{s_N}\frac{1}{2}m^TAm \ge 0$ for all values of $m(\sigma)$ by positive definiteness of the matrix $A$. Applying this observation in the first step and the Cauchy-Schwarz bound (\ref{CauchySchwarz}) on all interactions in the second step, we obtain that
\begin{flalign*}
    &\mu_{N,\beta,\alpha} \left( |m_1|, \ldots, |m_{s_N}| > \delta\right) \\
    \le &  \sum_{m_1, \ldots, m_{s_N}} \1_{ \left\{|m_1|, \ldots, |m_{s_N}| > \delta\right\}} \exp\left\{ \frac{1}{2}\frac{N}{s_N} \sum_{k=1}^{s_N} \beta m_k^2 + 2\alpha m_k m_{k+1} \right\} \frac{1}{2^N}\prod_{k=1}^{s_N} {\frac{N}{s_N} \choose \frac{N}{s_N}\frac{1+m_k}{2}} \\
    \le & \sum_{m_1, \ldots, m_{s_N}} \1_{ \left\{|m_1|, \ldots, |m_{s_N}| > \delta\right\}}  \exp\left\{ \frac{1}{2}\frac{N}{s_N} \sum_{k=1}^{s_N} (\beta+ 2 \alpha) m_k^2 \right\} \frac{1}{2^N}\prod_{k=1}^{s_N} {\frac{N}{s_N} \choose \frac{N}{s_N}\frac{1+m_k}{2}} \\
    \le& \left( \sum_m \1_{\left\{m> \delta \right\}} \exp\left\{ \frac{N}{s_N}\left(\frac{\beta+2\alpha}{2}m^2 -\log2 +s(m) \right) \right\} \right)^{s_N},
\end{flalign*}
where $s(m) = -\frac{1+m}{2}\log \frac{1+m}{2} - \frac{1-m}{2}\log \frac{1-m}{2}$ and the above is simply an $s_N$-fold product of Curie-Weiss models. This is well-studied and we know (see e.g.\ \cite{Velenik_book}) that in the high temperature regime the function 
\begin{equation}\label{F}
F_{\beta+2 \alpha}(m) \coloneqq \frac{\beta+2\alpha}{2}m^2 -\log2 +s(m)  
\end{equation}
has its maximum at 0 and is negative everywhere else and and monotonically decreasing in $|m|$. On the event where $|m|> \delta$, we can upper bound the exponential function in the Gibbs measure by plugging $m = \delta$ into $F_{\beta+2 \alpha}$. For $m$ close to zero we can use the Taylor expansion
$$
-\log 2 +s(m) = -\frac{1}{2}m^2 - \frac{1}{12}m^4  +\mathcal{O}(m^6).
$$
Therefore, for $\beta + 2\alpha < 1$, i.e.\ $\frac{1 - (\beta + 2\alpha)}{2}>0$ we obtain that
\begin{flalign*}
    \mu_{N,\beta,\alpha}\left( |m_1|, \ldots, |m_{s_N}| > \delta \right) 
    \le& \left( \exp\left\{-\frac{N}{s_N}\frac{1 - (\beta + 2\alpha)}{2} \delta^2 \right\} \sum_{m}\1_{\left\{m> \delta \right\}} \right)^{s_N} \nonumber\\
    \le & \exp\left\{ -\frac{1 - (\beta + 2\alpha)}{2} N \delta ^2 \right\} \left( \frac{N}{s_N} \delta \right)^{s_N},
\end{flalign*}
which converges to zero since $ \log \frac{N}{s_N} = o\left(\frac{N}{s_N}\right)$. This means that for any fixed $\delta$, with high probability there will be blocks whose magnetization will be at most $\delta$.

\medskip

For the critical line $\beta + 2\alpha = 1$, we do the same steps as above. In this case, the second order terms cancel and instead we obtain
\begin{flalign*}
    \mu_{N,\beta,\alpha}\left( |m_1|, \ldots, |m_{s_N}| > \delta \right) 
    \le&  \left( \exp\left\{-\frac{N}{s_N}\frac{1}{12} \delta^4 \right\} \sum_{m}\1_{\left\{m> \delta \right\}} \right)^{s_N} \nonumber \\
    \le & \exp\left\{ -\frac{1}{12} N\delta^4 \right\} \left( \frac{N}{s_N} \delta \right)^{s_N},
\end{flalign*}
which also converges to zero.
\end{proof}

In the above lemma, we have shown that with high probability not all of the blocks can have a "large" magnetization. The idea of the proof  of Proposition~\ref{m1} is to start in the first block, then move to the left and the right and in both directions stop at the first block that has a sufficiently small magnetization. Apply the Cauchy-Schwarz bound on all the visited blocks and decouple everything that comes beyond.

\begin{proof}[Proof of Proposition \ref{m1}]

For $N\in \N$ and some fixed $\delta >0$ that we will choose later, define the random variables $R_N$ and $L_N$ as

\begin{flalign*}
    &R_N = \inf\left\{i=1,\ldots, s_N \, :\, |m_i|\le  \delta\right\} \\
    &L_N = \inf\left\{ i=1,\ldots, s_N-1 \,:\, |m_{s_N-i}| \le \delta\right\}.
\end{flalign*}

By \eqref{boundHigh}, we know that the event $2 \le R_N < s_N-L_N \le s_N$ occurs with probability $1-\exp\left\{ -\frac{1 - (\beta + 2\alpha)}{2}N  \delta^2 \right\}$. Therefore, instead of $\mu_{N,\beta,\alpha}(|m_1|>\eps)$, we will consider
$$
    \mu_{N,\beta,\alpha}( |m_1|>\eps , \, 2 \le R_N < s_N-L_N \le s_N)
    \le  \sum_{ 2\le r<s_N-l\le s_N} \mu_{N, \beta,\alpha}(|m_1|>\eps ,R_n=r, L_n=l).
$$
Let us compute the probabilities on the right hand side: To that end, for fixed $2\le r < s_N - l \le s_N$, we define the set
$$
\mathcal{A}_{r,l} \coloneqq \left\{ R_N = r, \, L_N = l \right\}.
$$
Let us define the set $\mathcal{I}_{l,r} \coloneqq \{1, \ldots, r, s_N-l, \ldots, s_N\}$ and note that, by definition, on the event that $\mathcal{A}_{r,l}$ occurs we know that 
\begin{equation}\label{setrl}
     |m_r|,|m_{s_N-l}| \le \delta \quad \text{and} \quad |m_k|>\delta \forall k\in \mathcal{I}_{l-1,r-1} \setminus\{1\} 
\end{equation}
and we want to compute
\begin{flalign}\label{sumrl}
   & \mu_{N,\beta,\alpha} \left(\, |m_1|>\eps, \, \mathcal{A}_{r,l}\right)\nonumber \\ 
    =&\frac{1}{Z_{N,\beta,\alpha}}\sum_{m_1, \ldots, m_{s_N}} \1_{\{|m_1|>\eps\}} \1_{\mathcal{A}_{r,l}}\exp\left\{ \frac{1}{2}\frac{N}{s_N} \sum_{k=1}^{s_N}\beta m_k^2 + 2\alpha m_km_{k+1} \right\}\frac{1}{2^N}\prod_{k=1}^{s_N}{\frac{N}{s_N} \choose \frac{N}{s_N}\frac{1+m_k}{2}}. 
\end{flalign}
In the above lemma, we bounded $Z_{N, \beta, \alpha}$ from below by 1. Since here we want to forget about everything that comes beyond $r$ and $s_N-l$, we have to be more delicate in bounding the partition function to keep a normalizing constant for these blocks. Recall that $Z_{N,\beta, \alpha}$ is an $s_N$-fold sum. By keeping only the "$m_k=0$"-summands for $k \in \mathcal{I}_{l-1,r-1}$ and using the fact that by Stirling's formula, for large $n$, it holds that ${n \choose \frac{n}{2}} = \mathcal{O}\left(  \frac{1}{\sqrt{n}} 2^n \right)$, we obtain the lower bound
\begin{equation}
    Z_{N,\beta,\alpha} \ge \left(\frac{1}{2^{\frac{N}{s_N}}} { \frac{N}{s_N}\choose \frac{N}{2s_N}}\right)^{r+l-1}  \widetilde{Z}_{N}^{r,l}  
     \sim \left(C_1\sqrt{\frac{s_N}{N}}\right)^{r+l-1} \widetilde{Z}_{N}^{r,l},
\end{equation}
where $C_1>0$ is a constant and
$$
\widetilde{Z}_{N}^{r,l} = \sum_{m_r, \ldots, m_{s_N-l}}\exp\left\{\frac{1}{2}\frac{N}{s_N}\left(\sum_{k=r}^{s_N-l}\beta m_k^2+ \sum_{k=r}^{s_N-l-1} 2\alpha m_km_{k+1}\right)\right\} \left(\frac{1}{2^{\frac{N}{s_N}}}\right)^{s_N-r-l+1} \prod_{k=r}^{s_N-l}{\frac{N}{s_N}\choose\frac{N}{s_N}\frac{1+m_k}{2}}
$$
is exactly the partition function for all the remaining blocks for a model with zero boundary condition. (Note that for abbreviation reasons, we drop the indication for $\beta$ and $\alpha$ here). In order to also obtain a bound for the sum in (\ref{sumrl}), we will apply the Cauchy-Schwarz upper bound (\ref{CauchySchwarz}) on $m_km_{k+1}$ for all $k \in \mathcal{I}_{l-1,r-1}\cup \{s_N-l\}$. Hence, for the Hamiltonian, we get the following upper bound:
\begin{flalign*}
    \sum_{k=1}^{s_N}\beta m_k^2 + 2\alpha m_k m_{k+1} 
    \le& \  \bigg( \, \sum_{k\in \mathcal{I}_{l-1,r-1}} (\beta + 2\alpha)m_k^2 \, \bigg) \\
    &+ \alpha m_r^2 + \alpha m_{s_N-l}^2 + \sum_{k=r, \ldots s_N-l}\beta m_k^2 + \sum_{k=r, \ldots, s_N-l-1}2\alpha m_km_{k+1}
\end{flalign*}
Recall that on $\mathcal{A}_{r,l}$, we have that $m_r^2, m_{s_N-l}^2 \le \delta^2$. Plugging this into the above observation, for the sum in (\ref{sumrl}) we get the following upper bound
\begin{flalign*}
    &\sum_{m_1, \ldots, m_{s_N}} \1_{\{|m_1|>\eps\}} \1_{\mathcal{A}_{r,l}}\exp\left\{ \frac{1}{2}\frac{N}{s_N} \sum_{k=1}^{s_N}\beta m_k^2 + 2\alpha m_km_{k+1} \right\}\frac{1}{2^N}\prod_{k=1}^{s_N}{\frac{N}{s_N} \choose \frac{N}{s_N}\frac{1+m_k}{2}} \\
    \le &  \left(\sum_{m_1} \1_{|m_1|>\eps} \exp\left\{ \frac{N}{s_N} \left(\frac{\beta + 2\alpha}{2}m_1^2 - \log 2 +s(m_1)\right)\right\}\right) \exp\left\{ \frac{1}{2}\frac{N}{s_N} 2 \alpha \delta^2 \right\} \\
    & \quad \times \left( \sum_{m_k} \1_{\{|m_k|\ge \delta \}} \exp\left\{\frac{N}{s_N} \left(\frac{\beta + 2 \alpha}{2} m_k^2 - \log 2 + s(m_k) \right)\right\}\right)^{r+l-2} \widetilde{Z}_{N}^{r,l}.
\end{flalign*}
Note that the partition function $\widetilde{Z}_{N}^{r,l}$ cancels since it also appears in the bound for $Z_{N,\beta, \alpha}$. Applying the usual Curie-Weiss bounds on the independent Curie-Weiss parts for $\beta + 2\alpha <1$, yields
\begin{flalign*}
   &\mu_{N,\beta,\alpha} (|m_1|>\eps , \, R_N=r, L_N=l) \\
   \le & \left( \sqrt{\frac{N}{s_N}} \right)^{r+l-1}\exp\left\{ - \frac{N}{s_N} \frac{1-(\beta + 2\alpha)}{2}\eps^2\right\}\frac{N}{s_N} \exp\left\{\alpha \frac{N}{s_N}\delta^2\right\} \\
   &\quad \times \exp\left\{ -\frac{N}{s_N}(r+l-2)\frac{1-(\beta + 2\alpha)}{2}\delta^2\right\} \left(\frac{N}{s_N} \right)^{r+l-2} \\
   \le& \exp\left\{ -\frac{N}{s_N}\left(\frac{1-(\beta + 2\alpha)}{2}\eps^2 - \alpha \delta^2 \right) \right\}\exp\left\{-(r+l-1) \left( \frac{N}{s_N}(1-(\beta + 2\alpha))\delta^2 - \frac{3}{2}\log\frac{N}{s_N}\right) \right\}.
\end{flalign*}
Partitioning $\{ |m_1|> \eps\}$ into $\{  2 \le R_N \le s_N-L_N \le s_N\}$ and its complement and then applying the above observation together with (\ref{boundHigh}) yields 
\begin{flalign*}
    &\mu_{N, \beta, \alpha} (|m_1|>\eps ) \\
    \le&\mu_{N,\beta,\alpha}( |m_1|>\eps , \, 2 \le R_N \le s_N-L_N \le s_N) + \exp\left\{ - N\left(\frac{1-(\beta + 2\alpha)}{2} -o(1)\right) \delta^2\right\} \\
    \le& \exp\left\{ -\frac{N}{s_N}\left(\frac{1-(\beta + 2\alpha)}{2}\eps^2 - \alpha \delta^2 \right)\right\} \\
    & \qquad \qquad \times \sum_{ 2\le r<s_N-l\le s_N} \exp\left\{-(r+l-1) \left( \frac{N}{s_N}(1-(\beta + 2\alpha))\delta^2 - \frac{3}{2}\log\frac{N}{s_N}\right) \right\} \\
     & \qquad\qquad + \exp\left\{ - N\left(\frac{1-(\beta + 2\alpha)}{2} -o(1)\right) \delta^2\right\} \\
    \le &  s_N^2 \exp\left\{ -\frac{N}{s_N}\left(\frac{1-(\beta + 2\alpha)}{2}\eps^2 - \alpha \delta^2 \right)\right\} \exp\left\{-\left( \frac{N}{s_N}(1-(\beta + 2\alpha))\delta^2 - \frac{3}{2}\log\frac{N}{s_N}\right) \right\} \\
    &\qquad \qquad + \exp\left\{ - N\left(\frac{1-(\beta + 2\alpha)}{2} -o(1)\right) \delta^2\right\}  \\
    & \le  \exp\left\{ -\frac{N}{s_N}\left(\frac{1-(\beta + 2\alpha)}{2}\eps^2 - \alpha \delta^2 - 2\frac{s_N}{N}\log s_N \right)\right\}  + \exp\left\{ - N\left(\frac{1-(\beta + 2\alpha)}{2} -o(1)\right) \delta^2\right\}. 
\end{flalign*}
Now since $s_N =o( \frac{N}{\log N})$, we have that for $N$ large enough, $2 \frac{s_N}{N}\log s_N \le \frac{1-(\beta + 2\alpha)}{4}$ and if we choose $\delta < \sqrt{\frac{1-(\beta + 2\alpha)}{8\alpha}}\eps$ and define $c_{\beta, \alpha}\coloneqq \frac{1-(\beta + 2\alpha)}{8}$, then we have
$$
\mu_{N, \beta, \alpha} (|m_1|>\eps ) \le 2\exp\left\{- c_{\beta, \alpha} \frac{N}{s_N} \eps^2\right\}.
$$

Similarly, for $\beta + 2\alpha =1$, we obtain
\begin{flalign*}
   &\mu_{N,\beta,\alpha} (|m_1|>\eps , \, R_N=r, L_N=l) \\
   \le & \left( \sqrt{\frac{N}{s_N}} \right)^{r+l-1}\exp\left\{ - \frac{N}{s_N} \frac{1}{12}\eps^4\right\}\frac{N}{s_N} \exp\left\{\alpha \frac{N}{s_N} \delta^4\right\}  \exp\left\{ -\frac{N}{s_N}\frac{(r+l-2)}{12}\delta^4\right\} \left(\frac{N}{s_N} \right)^{r+l-2} \\
   \le& \exp\left\{ -\frac{N}{s_N}\left(\frac{1}{12}\eps^4 - \alpha \delta^4 \right) \right\}\exp\left\{-(r+l-1)\left(\frac{N}{s_N}\frac{\delta^4}{24} - \frac{3}{2}\log \frac{N}{s_N}\right) \right\}.
\end{flalign*}
and hence it follows together with (\ref{boundCritical})
\begin{flalign*}
    &\mu_{N, \beta, \alpha} (|m_1|>\eps ) \\ 
    \le&  s_N^2 \exp\left\{ -\frac{N}{s_N}\left( \frac{\eps^4}{12} - \alpha \delta^4\right) \right\} \exp\left\{-\left( \frac{N}{s_N} \frac{\delta^4}{24} - \frac{3}{2}\log\left(\frac{N}{s_N}\right) \right) \right\}  +  \exp\left\{ -\left(\frac{1}{12} - o(1) \right)N \delta^4 \right\} \\
    \le & 2\exp\left\{ - \frac{1}{48}\eps^4\frac{N}{s_N} \right\},
\end{flalign*}
where the last step follows since for $N$ large enough, $2\frac{s_N}{N}\log s_N < \frac{\eps^4}{24}$ and then we choose $\delta< \left( 48\alpha\right)^{-\frac{1}{4}} \eps$
\end{proof}
It follows immediately that also $\sup_{i=1}^{s_N}|m_i|$ converges to zero in probability since
$$
\mu_{N,\beta,\alpha}(\exists k \, : \, |m_k|>\eps) \le s_N \cdot \mu_{N, \beta, \alpha} (|m_1|>\eps ),
$$
giving us the bounds
$$
\mu_{N,\beta,\alpha}(\exists k \, : \, |m_k|>\eps) \le 2s_N\exp\left\{- c_{\beta, \alpha} \frac{N}{s_N} \eps^2\right\} 
$$
in the strict high temperature case and 
$$
\mu_{N,\beta,\alpha}(\exists k \, : \, |m_k|>\eps) \le 2s_N \exp\left\{ - \frac{1}{48}\eps^4\frac{N}{s_N} \right\}
$$
for the critical line. This finishes the proof of Theorem \ref{WLLN}\,(i).

\section{Hubbard-Stratonovich}
For the CLT we need the rescaled version $\sqrt{\frac{N}{s_N}}m$ of the magnetization vector. We prove the convergence via a Laplace method using a Hubbard-Stratonovich transform. More precisely, we will convolute the distribution of the rescaled magnetization vector under the Gibbs measure with an $s_N$-dimensional normal distribution. If the covariance matrix is chosen properly, the resulting measure has a "nice" density, which will allow us to compute its Laplace transform and show convergence to the moment generating function of a Gaussian.
\begin{lemma}\label{phi}
    The distribution of the rescaled magnetization under the Gibbs measure satisfies
    $$
    \mu_{N,\alpha, \beta}\left( \sqrt{\frac{N}{s_N}}m \in \cdot \right) \star \mathcal{N}\left(0, A^{-1}\right) = z_N \exp \left\{ - \frac{N}{s_N}\varphi_N\left(\sqrt{\frac{s_N}{N}}x\right)  \right\}\d^{s_N}x,
    $$
    where 
     \begin{flalign}\label{def_phi}
     \varphi_N(x) &= \frac{1}{2}x^TAx - \sum_{k=1}^{s_N} \log\cosh \left( x^T A e_k \right)   \\
     &= \frac{\beta}{2}\sum_{k=1}^{s_N}x_k^2 + \frac{\alpha}{2}\sum_{k=1}^{s_N} x_k(x_{k-1} + x_{k+1}) - \sum_{k=1}^{s_N}\log\cosh(\beta x_k + \alpha(x_{k-1} + x_{k+1})) \nonumber
    \end{flalign}
    and the normalizing constant $z_N$ is given by
    $$
    z_N = \int_{\R^{s_N}} \exp \left\{ - \frac{N}{s_N}\varphi_N\left(\sqrt{\frac{s_N}{N}}x\right)  \right\}\d^{s_N}x.
    $$
        \end{lemma}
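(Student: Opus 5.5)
The plan is the standard Hubbard–Stratonovich computation: write the density of the convolution explicitly, expand the Gaussian quadratic form, recognise the Gibbs weight, and carry out the sum over spins.

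Write $n=N/s_N$ and $W=\sqrt{n}\,m$, and let $Y\sim\mathcal N(0,A^{-1})$ be independent of the spins. Since $A$ is positive definite ($\beta>2\alpha$), $Y$ has a density. For any $x\in\R^{s_N}$ the law of $W+Y$ has density
$$
f(x)=\frac{1}{Z_{N,\beta,\alpha}}\,\frac{\sqrt{\det A}}{(2\pi)^{s_N/2}}\;\E_\rho\Big[\exp\Big\{\tfrac12 W^TAW-\tfrac12 (x-W)^TA(x-W)\Big\}\Big].
$$
Here we used $H_N=\frac12 n\, m^TAm=\frac12 W^TAW$ and wrote the Gibbs expectation as a $\rho$-expectation divided by $Z_{N,\beta,\alpha}$.

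Expanding the square, the quadratic terms in $W$ cancel exactly; this is the choice of covariance $A^{-1}$. The exponent becomes $-\tfrac12 x^TAx + x^TAW$. Now
$$
x^TAW=\sqrt{n}\sum_k (Ax)_k m_k=\frac{1}{\sqrt n}\sum_k (Ax)_k\sum_{i\in S_k}\sigma_i .
$$
Under $\rho$ the spins are i.i.d.\ symmetric, so the expectation factorises over spins:
$$
\E_\rho\big[e^{x^TAW}\big]=\prod_{k=1}^{s_N}\cosh\big(n^{-1/2}(Ax)_k\big)^{n}.
$$
Hence
$$
f(x)\propto\exp\Big\{-\tfrac12 x^TAx+n\sum_k\log\cosh\big(n^{-1/2}x^TAe_k\big)\Big\},
$$
using the symmetry of $A$ to write $(Ax)_k=x^TAe_k$.

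Next substitute $y=x/\sqrt n$. Then $\tfrac12 x^TAx=n\cdot\tfrac12 y^TAy$ and $n^{-1/2}x^TAe_k=y^TAe_k$, so the exponent equals $-n\,\varphi_N(y)=-\frac{N}{s_N}\varphi_N\big(\sqrt{s_N/N}\,x\big)$, with $\varphi_N$ as in \eqref{def_phi}. All prefactors independent of $x$ collect into a constant. Because $f$ is a probability density, that constant must equal the inverse of $\int_{\R^{s_N}}\exp\{-\frac{N}{s_N}\varphi_N(\sqrt{s_N/N}\,x)\}\,\d^{s_N}x$, which is the normalisation the statement calls $z_N$ (the statement's $z_N$ is to be read as this normalising factor, as in the usual convention).

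The second line of \eqref{def_phi} follows by writing out the circulant structure. We have $(Ax)_k=\beta x_k+\alpha(x_{k-1}+x_{k+1})$, with periodic indices, and $x^TAx=\sum_k\beta x_k^2+\alpha x_k(x_{k-1}+x_{k+1})$.

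There is no real obstacle. The only points needing care are the exact cancellation of the $W^TAW$ terms, which requires the convolution covariance to be precisely $A^{-1}$, and the bookkeeping of the factors $\sqrt n$ in the change of variables. Integrability of $f$ is automatic since it is a probability density; alternatively it follows from $\log\cosh t\le|t|$ together with positive definiteness of $A$.
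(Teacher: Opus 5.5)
Your proof is correct and is the same Hubbard--Stratonovich computation the paper carries out: the paper integrates over a measurable set $U$ instead of writing out the density, but it likewise cancels the quadratic terms in $m$ and factorises the spin sum into $\prod_k \cosh(\sqrt{s_N/N}\,x^TAe_k)^{N/s_N}$. Your remark on the constant is also right: the normalising factor must be the reciprocal of the displayed integral, which is how the paper uses $z_N$ later when it writes the moment generating function as a ratio of two integrals.
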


    \begin{proof}
 For any measurable set $U \in \mathcal{B}(\R^{s_N})$,
    \begin{align*}
        &\mu_{N,\alpha, \beta}\left( \sqrt{\frac{N}{s_N}}m \in \cdot \right) \star \mathcal{N}\left(0, A^{-1}\right)(U) \\
        =& z_N \sum_{\sigma\in \{-1,+1\}^N} \frac{1}{2^N} \int_U \exp\left\{-\frac{1}{2}\left(x-\sqrt{\frac{N}{s_N}}m\right)^TA\left(x-\sqrt{\frac{N}{s_N}}m\right) \right\}\exp\left\{ \frac{1}{2}\frac{N}{s_N}m^TAm\right\} \d^{s_N}x \\
        =&z_N  \int_U \exp\left\{-\frac{1}{2}x^TAx \right\}\sum_{\sigma\in \{-1,+1\}^N} \frac{1}{2^N}\exp\left\{ \sqrt{\frac{N}{s_N}}x^TAm\right\} \d^{s_N}x \\
        =& z_N  \int_U \exp\left\{-\frac{1}{2}x^TAx \right\}\sum_{\sigma\in \{-1,+1\}^N} \frac{1}{2^N}\exp\left\{\sqrt{\frac{s_N}{N}}x^TA \sum_{k=1}^{s_N} \sum_{i \in S_k} \sigma_i e_k \right\} \d^{s_N}x \\
        =& z_N  \int_U \exp\left\{-\frac{1}{2}x^TAx \right\} \frac{1}{2^N}\prod_{k=1}^{s_N}\prod_{i \in S_k} \left(\exp\left\{\sqrt{\frac{s_N}{N}}x^TAe_k \right\} + \exp\left\{-\sqrt{\frac{s_N}{N}}x^TAe_k \right\}\right) \d^{s_N}x \\
        =& z_N  \int_U \exp\left\{-\frac{1}{2}x^TAx \right\} \frac{1}{2^N}\prod_{k=1}^{s_N} \left(2 \cosh\left(\sqrt{\frac{s_N}{N}}x^TAe_k \right)\right)^{\frac{N}{s_N}} \d^{s_N}x \\
        =& z_N  \int_U \exp\left\{-\frac{1}{2}\frac{N}{s_N}\left(\sqrt{\frac{s_N}{N}}x\right)^TA\left(\sqrt{\frac{s_N}{N}}x\right) + \sum_{k=1}^{s_N}\frac{N}{s_N}\log\cosh\left(\sqrt{\frac{s_N}{N}}x^TAe_k \right)\right\} \d^{s_N}x \\
        =& z_N  \int_U \exp\left\{-\frac{N}{s_N}\varphi_N\left(\sqrt{\frac{s_N}{N}}x\right)\right\} \d^{s_N}x.\qedhere
    \end{align*}
    \end{proof}

    The central aim is to find the minimizers of the function $\varphi_N$. Any minimizer has to satisfy the critical equation $\nabla \varphi_N (x) = 0$. The gradient of $\varphi_N$ is given by
    \begin{flalign*}\nabla \varphi_N(x) &= \begin{pmatrix}
        \beta x_1 + \alpha(x_2 + x_{s_N}) - \beta \tanh\left(x^TAe_1\right) - \alpha (\tanh\left(x^TAe_2\right) + \tanh\left(x^TAe_{s_N}\right)) \\
          \vdots \\
        \beta x_{s_N} + \alpha(x_1 + x_{s_N-1}) - \beta \tanh\left(x^TAe_{s_N}\right) - \alpha (\tanh\left(x^TAe_1\right) + \tanh\left(x^TAe_{s_N-1}\right))
    \end{pmatrix}\\
    &= Ax - A \begin{pmatrix}
        \tanh\left( x^TAe_1\right) \\ \vdots \\  \tanh\left( x^TAe_{s_N}\right)
    \end{pmatrix}
\end{flalign*}
    and since the matrix $A$ is invertible, we arrive at the critical equation
    \begin{equation} \label{fixedpoint}
        x = \begin{pmatrix}
        \tanh\left( x^TAe_1\right) \\ \vdots \\  \tanh\left( x^TAe_{s_N}\right)
    \end{pmatrix}.
    \end{equation}
    It is immediate that the point $x=(0,\ldots, 0)$ always (i.e.\ independently of the choice of $\alpha, \beta$) satisfies (\ref{fixedpoint}) and therefore is a critical point. The next lemma will characterize the regimes where the point zero is a minimizer of $\varphi_N$.
    \begin{lemma}\label{definiteness}
        The Hessian matrix of $\varphi_N$ at $x=(0,\ldots, 0)$ is 
        \begin{itemize}
            \item positive definite if $\beta + 2\alpha < 1$,\\
            \item positive semi-definite if $\beta + 2\alpha = 1$ and \\
            \item not positive (semi-)definite if $\beta + 2\alpha > 1$.
        \end{itemize}
    \end{lemma}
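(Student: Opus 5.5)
We compute the Hessian of $\varphi_N$ at the origin explicitly and then diagonalize it with the Fourier basis of circulant matrices. Write $\varphi_N(x)=\frac12 x^TAx-\sum_k \log\cosh((Ax)_k)$. Since $\log\cosh(t)=\frac{t^2}{2}+\mathcal O(t^4)$, the second-order Taylor expansion at $x=0$ is
$$
\varphi_N(x)=\tfrac12 x^TAx-\tfrac12\sum_{k=1}^{s_N}(Ax)_k^2+\mathcal O(|x|^4)=\tfrac12 x^T(A-A^2)x+\mathcal O(|x|^4).
$$
Hence $\Hess\varphi_N(0)=A-A^2=A(I-A)$. The same formula follows by differentiating the gradient computed above: $\nabla\varphi_N(x)=Ax-A\tanh(Ax)$ (componentwise), whose Jacobian at $0$ is $A-A\,\diag(\sech^2(0))\,A=A-A^2$.

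Since $A$ is symmetric circulant, $A$ and $I-A$ are simultaneously diagonalized by the discrete Fourier basis. So $A(I-A)$ has eigenvalues $\lambda_j(1-\lambda_j)$, where $\lambda_j=\beta+2\alpha\cos(2\pi j/s_N)$ for $j=1,\dots,s_N$. Under the standing assumption $\beta>2\alpha>0$, every $\lambda_j>0$. The sign of each eigenvalue of the Hessian is therefore the sign of $1-\lambda_j$.

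The three cases follow from $\max_j\lambda_j=\lambda_{s_N}=\beta+2\alpha$, attained at $j=s_N$ (eigenvector $(1,\dots,1)$).
\begin{itemize}
\item If $\beta+2\alpha<1$, then $1-\lambda_j\ge 1-(\beta+2\alpha)>0$ for all $j$, so the Hessian is positive definite.
\item If $\beta+2\alpha=1$, all eigenvalues are $\ge0$ and the eigenvalue at $j=s_N$ vanishes, so the Hessian is positive semi-definite but not definite.
\item If $\beta+2\alpha>1$, the eigenvalue $\lambda_{s_N}(1-\lambda_{s_N})<0$. The constant vector is then a direction of negative curvature, so the Hessian is not positive semi-definite.
\end{itemize}

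There is no real obstacle here. The only points needing care are identifying the Hessian as the product $A(I-A)$ and using that $A$ and $I-A$ commute, so that their eigenvalues multiply. Both are immediate from the circulant structure and the already-established positivity of $A$.
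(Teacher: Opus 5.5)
Your proof is correct. It shares the paper's first step, namely $\Hess\varphi_N(0)=A-A^2$ diagonalized through the circulant eigenvalues of $A$. The difference lies in how you decide the sign of the eigenvalues.

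The paper expands the eigenvalues of $A-A^2$ as a trigonometric polynomial in $\cos(2\pi j/s_N)$. It then splits into two cases according to the sign of the linear coefficient $2\alpha-4\alpha\beta$, i.e.\ $\beta<\frac12$ and $\beta>\frac12$, to locate the smallest eigenvalue. That eigenvalue is $(\beta-2\alpha)(1-\beta+2\alpha)$ at $j=s_N/2$ in the first case and $(\beta+2\alpha)(1-\beta-2\alpha)$ at $j=s_N$ in the second.

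You instead factor each eigenvalue as $\lambda_j(1-\lambda_j)$ and use $\lambda_j\ge\beta-2\alpha>0$. The sign is then governed by $1-\lambda_j$ alone, which is smallest at the top eigenvalue $\beta+2\alpha$ of $A$, with the constant vector as eigenvector. This removes the case distinction entirely and uses only that $A$ is positive definite with largest eigenvalue $\beta+2\alpha$.

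Your route is also less error-prone. The paper's displayed eigenvalue formula drops the constant $-2\alpha^2$ coming from $(A^2)_{kk}=\beta^2+2\alpha^2$. Its parenthetical equivalences with $\beta>\frac12$ and $\beta<\frac12$ are also interchanged. The endpoint values the paper actually uses are nonetheless correct.

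What the paper's route gives in addition is the exact smallest eigenvalue of the Hessian, i.e.\ the precise spectral gap. The lemma itself does not need this.
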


    \begin{proof}
    The Hessian matrix of $\varphi_N$ at a point $x$ is given by
    \begin{equation}\label{Hess}
    \Hess \varphi_N (x) = A - A\sum_{k=1}^{s_N} \sech^2\left(x^TAe_k \right)e_ke_k^TA.
    \end{equation}
    Since $\sech(0)=1$, the above becomes simply $\Hess \varphi_N(0) = A- A^2$ and the eigenvalues are given by
\begin{align}\label{eq:eigenvalues}
    \lambda_j = \beta - \beta^2 + (2\alpha - 4\alpha \beta) \cos\left(2\pi \frac{j}{s_N} \right) - 2\alpha^2\cos \left(4 \pi \frac{j}{s_N}\right), \quad j = 1, \ldots, s_N.
    \end{align}
    Indeed, this is not hard to see since $A^2$ is a cyclic matrix as well, hence we can readily compute its eigenvalues and together with the eigenvalues of $A$ one obtains the above result. The Hessian matrix is positive definite if and only if its smallest eigenvalue is positive. This leads to the following three cases:

    \begin{itemize}
        \item If $2\alpha > 4\alpha\beta$ (this is equivalent to $\beta > \frac{1}{2}$ and $ \alpha >0$), then 
                $$
        \lambda_{\frac{s_N}{2}}= \beta - 2\alpha - (\beta - 2\alpha)^2
        $$
        is the smallest eigenvalue and it is always positive since $0< \beta - 2\alpha < 1$ is satisfied in this case. Note that if $s_N$ is odd, then the above gives a lower bound.
        \item If $2\alpha < 4\alpha\beta$ (this is equivalent to $\beta < \frac{1}{2}$ and $ \alpha >0$), then 
        $$
        \lambda_0 = \beta + 2\alpha - (\beta + 2\alpha)^2 
        $$
        is the smallest eigenvalue and it is positive if and only if $0< \beta + 2\alpha < 1$.    
    \end{itemize}
    Putting together both cases proves that $\Hess \varphi_N(0)$ is positive definite iff $\beta + 2 \alpha <1$ and positive semi-definite on the critical line $\beta + 2 \alpha=1$.
    \end{proof}
    Lemma \ref{definiteness} shows that the in the high temperature regime (and possibly in the critical case) the function $\varphi_N$ has a minimum in the origin. The next lemma will prove that in both cases this is actually the global minimum. Afterwards we will turn to the low temperature case.
    \begin{lemma}\label{HighTemp}
        If $\beta + 2 \alpha \le 1$, the function $\varphi_N$ takes its global minimum in $x = (0, \ldots, 0)$.
    \end{lemma}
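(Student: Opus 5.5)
Writing $y = Ax$, the function becomes
$$
\varphi_N(x) = \tfrac12 y^T A^{-1} y - \sum_{k=1}^{s_N} \log\cosh(y_k).
$$
We use the standard inequality $\log\cosh(t) \le t^2/2$, valid for all $t \in \R$ with equality only at $t=0$. It gives
$$
\varphi_N(x) \ge \tfrac12 y^T\big(A^{-1} - I\big)y .
$$
So it suffices to show that $A^{-1} - I$ is positive semi-definite whenever $\beta+2\alpha \le 1$.

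The matrix $A$ is symmetric circulant with eigenvalues $\lambda_j = \beta + 2\alpha\cos(2\pi j/s_N) \in (0, \beta+2\alpha]$. Positivity uses $\beta > 2\alpha$, and $A^{-1}$ has the same eigenvectors. Hence the eigenvalues of $A^{-1}-I$ are $1/\lambda_j - 1 \ge 1/(\beta+2\alpha) - 1 \ge 0$. It follows that $\varphi_N(x) \ge 0 = \varphi_N(0)$ for all $x$, so $0$ is a global minimizer.

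For uniqueness, which is useful later: in the strict case $\beta+2\alpha<1$ the quadratic form is positive definite, so $\varphi_N(x)>0$ for $x\neq 0$. On the critical line, equality $\varphi_N(x)=0$ requires in particular $\log\cosh(y_k)=y_k^2/2$ for every $k$. That forces $y=0$, hence $x=0$ by invertibility of $A$. So the origin is the unique global minimizer in both cases.

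There is no serious obstacle. The only points to check are the elementary inequality $\log\cosh t\le t^2/2$ and the spectral bound $\lambda_{\max}(A)\le\beta+2\alpha$. The first follows from $\tanh t \le t$ for $t\ge0$, integrated together with evenness. The second follows from the explicit eigenvalues of $A$ computed in the setting section.
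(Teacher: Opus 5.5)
Your proof is correct, and it takes a genuinely different route from the paper.

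The paper argues through the critical equation \eqref{fixedpoint}. It writes critical points as fixed points of $h(x)=\sum_k \tanh(x^TAe_k)e_k$ and bounds $\|\D h(x)\|_2\le\|A\|_2=\beta+2\alpha$. It then concludes via Banach's theorem that $0$ is the only critical point, hence the minimizer. On the critical line it appeals to a ``weak contraction'' on $[-1,1]^{s_N}$.

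You instead establish the global inequality
\[
\varphi_N(x)=\tfrac12\, y^T(A^{-1}-I)y+\sum_{k=1}^{s_N}\Big(\tfrac{y_k^2}{2}-\log\cosh y_k\Big)\ \ge\ 0,\qquad y=Ax,
\]
as a sum of two nonnegative terms. This gains you three things:
\begin{itemize}
\item You never need existence of a global minimizer. The paper's passage from ``unique critical point'' to ``unique minimizer'' tacitly uses coercivity of $\varphi_N$.
\item The critical line is handled by a clean equality-case analysis. The paper's argument there is not literally Banach's theorem, since the Lipschitz bound is only $\le 1$. It needs an extra strictness argument, e.g.\ Edelstein's theorem or a segment estimate.
\item In the strict case you get the quantitative bound $\varphi_N(x)\ge\frac12\big(\frac{1}{\beta+2\alpha}-1\big)\|Ax\|^2$.
\end{itemize}

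In return, the paper's contraction argument shows that $0$ is the \emph{only} critical point of $\varphi_N$, not just the unique minimizer. It also sets up the fixed-point formulation that is reused in the low-temperature Lemmas \ref{SameSign} and \ref{LowTemp}. Your key inequality $\log\cosh t\le t^2/2$ is exactly $G\le 0$ from \eqref{eq:logcosh}, which the paper uses later anyway.
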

    \begin{proof}
    Let us start off with the subcritical regime $\beta + 2\alpha < 1$. The critical case will follow in a very similar manner. Recall that any critical point has to satisfy (\ref{fixedpoint}), and therefore has to be a fixed point of the function $h$ defined by
    $$
    x \overset{(\ref{fixedpoint})}{=} \sum_{k=1}^{s_N}\tanh\left( x^TAe_k \right)e_k \eqqcolon h(x).
    $$
    The Jacobian of $h$ is given by $\D h(x) = \sum_{k=1}^{s_N}\sech^2\left( x^TAe_k \right)e_k e_k^T A$ and is the product of the diagonal matrix $\sum_{k=1}^{s_N}\sech^2\left( x^TAe_k \right)e_k e_k^T$, whose entries are bounded by 1, and $A$. Hence for the spectral norm of $\D h (x) $ we obtain for any $x \in \R^{s_N}$,
    $$
    \|\D h (x)\|_2 \le \left\| \, \sum_{k=1}^{s_N}\sech^2\left( x^TAe_k \right)e_k e_k^T \, \right\|_2 \|A\|_2 \le \|A\|_2 = \beta + 2\alpha.
    $$
    Hence $h$ is a contraction in the case where $\beta + 2\alpha <1$ and by Banach's fixed point Theorem has a unique fixed point at $(0,\ldots,0)$ and consequently this is the unique minimizer of $\varphi_N$.

    Next, consider the critical line $\beta + 2\alpha = 1$. The arguments in this case are similar. The only difference is that now $\|\D h(x)\|_2 < 1$ for all $x$ that have no zero component and $\|\D h (x)\|_2 = 1$ for all $x$ such that there exists $k \in \{1, \ldots, s_N\}$ with $x_k= 0$. In this case $h$ is only a weak contraction. However, the block magnetizations $m_k$ only live on $[-1,1]$, hence it suffices if we restrain ourselves to the compact set $[-1,1]^{s_N}$ such that again by Banach's fixed point Theorem $h$ has the unique fixed point $x=(0, \ldots,0)$.
    \end{proof}
    
It remains to find the minimizers in the low temperature regime, which is less straightforward. In this case, $x=0$ is also a critical point but we have seen in Lemma \ref{definiteness} that there is no minimum. In order to identify the minimizers, the first step will be to show that the solutions must satisfy that all coordinates have the same sign. From there it will not be hard to show that they must already be equal.

\begin{lemma}\label{SameSign}
    Let $\beta + 2 \alpha > 1$. Let $x$ be a minimizer of $\varphi_N$. Then $x$ satisfies
        $$
     x_k \ge 0 \quad \text{for some $k\in \{1,\ldots,s_N\}$} \Leftrightarrow x_k \ge 0 \quad \text{for all $k\in \{1,\ldots,s_N\}$}.    
    $$
\end{lemma}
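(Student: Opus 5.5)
Write $y=Ax$, so that $y_k=x^TAe_k=\beta x_k+\alpha(x_{k-1}+x_{k+1})$ and
$$
\varphi_N(x)=\tfrac12 x^TAx-\sum_{k}\log\cosh(y_k).
$$
The plan is a sign-flip (absolute value) argument. Given a minimizer $x$, compare it with $|x|=(|x_1|,\dots,|x_{s_N}|)$.

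\emph{Quadratic part.} Since all entries of $A$ are nonnegative and $\alpha>0$,
$$
\tfrac12|x|^TA|x|-\tfrac12x^TAx=\alpha\sum_k\big(|x_k||x_{k+1}|-x_kx_{k+1}\big)\ge0 .
$$
This alone points the wrong way, so the quadratic term cannot be treated in isolation.

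\emph{Better route.} Use the dual, pre-Hubbard--Stratonovich representation instead. Since $x=\tanh(y)$ at critical points, work with the variational function
$$
\psi(m)=-\tfrac12 m^TAm-\sum_k s(m_k)
$$
(up to constants), which satisfies $\min\varphi_N=\min\psi$. This follows from Legendre duality of $\log\cosh$ and the entropy $s$, and $\varphi_N$ and $\psi$ share critical points via $x=m$. I will invoke this standard correspondence; alternatively, one can check directly that $\varphi_N(x)\ge\psi(\tanh(Ax))$ with equality at critical points.

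For $\psi$ the flip works cleanly. The entropy $s$ is even, so
$$
\psi(|m|)=\psi(m)-\alpha\sum_k\big(|m_k||m_{k+1}|-m_km_{k+1}\big)\le\psi(m),
$$
with strict inequality as soon as some neighbouring pair has $m_km_{k+1}<0$.

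\emph{Contradiction step.} Suppose a minimizer $x$ of $\varphi_N$ has coordinates of both signs, say $x_j<0\le x_k$. Walking cyclically from $j$ to $k$ produces a neighbouring pair with $x_ix_{i+1}<0$, or else a zero sandwiched between negative and nonnegative entries. In the first case the strict inequality gives $\psi(|x|)<\psi(x)=\min\psi$, a contradiction.

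\emph{Main obstacle: zero coordinates.} Suppose $x_i=0$ with $x_{i-1}<0$, or more generally zeros separating sign classes. Here I use the critical equation \eqref{fixedpoint}:
$$
0=x_i=\tanh\big(\beta\cdot 0+\alpha(x_{i-1}+x_{i+1})\big),
$$
which forces $x_{i+1}=-x_{i-1}>0$. So the neighbours of a zero have strictly opposite signs. Passing to $|x|$ then strictly increases the cross terms $x_ix_{i\pm1}$? No, these vanish. Instead, compare $\psi$ at $|x|$, which has the same value, and note that $|x|$ must then also be a minimizer, hence a critical point. At $|x|$ the $i$-th equation reads $0=\tanh(\alpha(|x_{i-1}|+|x_{i+1}|))>0$, a contradiction.

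If instead a run of zeros occurs, a repeated application of \eqref{fixedpoint} propagates zeros along the run, forcing all coordinates to vanish. That gives $x=0$, which is not a minimizer by Lemma~\ref{definiteness} since $\beta+2\alpha>1$.

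\emph{Transfer back to $\varphi_N$.} The step requiring the most care is the equality $\min\varphi_N=\min\psi$ together with the identification of minimizers, $x\mapsto x$. If this is awkward, I would instead do the flip directly on $\varphi_N$ via the identity
$$
\log\cosh(y)=\sup_m\{my-\text{(entropy term)}\},
$$
writing $\varphi_N(x)=\inf_m\{\dots\}$ and flipping $m$ and $x$ simultaneously.
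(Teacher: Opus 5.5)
Your proof is correct, and it takes a genuinely different route from the paper's.

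\emph{The paper's route.} The paper flips the coordinates in $J$ directly inside $\varphi_N$. For $x\in[0,1]^{s_N}$ it bounds the $\log\cosh$ differences by the Lipschitz estimate \eqref{sublinear} and claims the linear gap $\varphi_N(x)-\varphi_N(y)\le-2\beta\sum_{j\in J}x_j$. This gap is reused as \eqref{eq:energygapflip} in Lemma~\ref{lem:minimal}.

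\emph{Your route.} You pass by Legendre duality to the mean-field functional $\psi$. There the entropy is even, so $m\mapsto|m|$ only changes the interaction term, and with the favourable sign. Zero coordinates are then handled by the fixed-point equation at the minimizer $|x|$ together with Lemma~\ref{definiteness}.

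\emph{Why the detour matters.} The obstruction you noticed at the start is exactly where the paper's direct estimate fails. Since $|(Ay)_k|\le (Ax)_k$ and $\log\cosh$ is even and $1$-Lipschitz, one only gets $\log\cosh((Ay)_k)-\log\cosh((Ax)_k)\ge (Ay)_k-(Ax)_k$. That is the reverse of the inequality used in the paper. In fact the claimed linear gap is false near the origin: $\varphi_N(0)=0$ and $\nabla\varphi_N(0)=0$ force $\varphi_N(x)-\varphi_N(y)=\mathcal O(\|x\|^2)$, while $-2\beta\sum_{j\in J}x_j$ is linear in $x$. So your argument establishes the lemma where the paper's proof does not. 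The price is that you get a strict inequality only for $\psi$ and a qualitative statement about minimizers of $\varphi_N$. You do not get a quantitative gap for $\varphi_N$ of the kind Lemma~\ref{lem:minimal} relies on.

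\emph{Making the transfer self-contained.} Rather than citing a standard correspondence, put $\tilde\psi=\psi+s_N\log 2$ and record
\[
\tilde\psi(\tanh(Ax))\le\varphi_N(x)\le\tilde\psi(x),\qquad \varphi_N(x)-\tilde\psi(\tanh(Ax))=\tfrac12\,(x-\tanh(Ax))^TA\,(x-\tanh(Ax)).
\]
The right inequality is the Legendre inequality with $m=x$. This sandwich gives $\min\varphi_N=\min\tilde\psi$ and $\varphi_N=\tilde\psi$ at fixed points. It also shows that both functionals have the same minimizers, all lying in $(-1,1)^{s_N}$. In particular, a minimizer $x$ of $\varphi_N$ satisfies $\varphi_N(|x|)\le\tilde\psi(|x|)\le\tilde\psi(x)=\varphi_N(x)$.

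\emph{Minor points.}
\begin{itemize}
\item In general $\psi(|x|)\le\psi(x)$ rather than equality. Either way $|x|$ is a minimizer, which is all you use.
\item The run-of-zeros case cannot arise from your walk, since $x_{i-1}<0$ already forces $x_{i+1}>0$.
\item The zero case can be streamlined. For any minimizer $x$, the vector $|x|$ is again a minimizer, hence a fixed point. A single zero coordinate then propagates to $x=0$, which Lemma~\ref{definiteness} excludes.
\end{itemize}
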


\begin{proof}
Let $x \in [0,1]^{s_N}$. Further let $J \subset \{1, \ldots, s_N\}$ such that $0< |J| < s_N$ and define $y$ as 
$$
y_k = x_k \quad \text{for all $ k \in J^c$} \quad \text{and} \quad y_k = -x_k \quad \text{for all $ k \in J$}.
$$
We will show that $\varphi_N(x) - \varphi_N(y) \le 0$ with equality iff $x_k = 0$ for all $k \in J$. To that end, first note that 
$$
\left|\frac{\d}{\d x}\log \cosh(x)\right| = |\tanh(x)| <1,
$$
and therefore for all $0 \le \widetilde{x}\le\widetilde{y}$ it is true that
\begin{equation}\label{sublinear}
    -1 < \frac{\log\cosh(\widetilde{x}) - \log\cosh(\widetilde{y})}{\widetilde{x}-\widetilde{y}} <1 .
\end{equation}
Note that the quadratic terms of $\varphi_N$ cancel when considering the difference, hence applying the observation in (\ref{sublinear}) in the second step and then plugging in the definition of $y$ as we obtain
\begin{flalign*}
    \varphi_N(x) - \varphi_N(y) 
    =& \alpha\sum_{k=1}^{s_N} x_k x_{k+1} - y_k y_{k+1} \\
    &+ \sum_{k=1}^{s_N} \log \cosh(\beta y_k + \alpha(y_{k-1} + y_{k+1})) -\log \cosh(\beta x_k + \alpha(x_{k-1} + x_{k+1})) \\
    \le& \alpha\sum_{k=1}^{s_N} 2x_k x_{k+1}(\1_{k\in J, k+1\notin J}+ \1_{k \notin J, k+1\in J}) \\  
    &+ \sum_{k=1}^{s_N} (\beta y_k + \alpha(y_{k-1} + y_{k+1}) -(\beta x_k + \alpha(x_{k-1} + x_{k+1}))\1_{\{k-1,k,k+1\}\cap J \neq \emptyset}\\
    =& \alpha\sum_{j\in J}2 x_jx_{j-1}\1_{\{j-1\notin J\}} + 2x_jx_{j+1}\1_{\{j+1\notin J\}}  + \sum_{j \in J} -2\beta x_j - 2\alpha x_{j} - 2\alpha x_{j} \\
    \le & \sum_{j \in J} ( -2\beta)x_j
\end{flalign*}
where the last inequality follows since $x_{j-1}, x_{j+1}$ and the indicator function can be bounded from above by 1. It follows that $\varphi_N(x) - \varphi_N(y) \le 0$ with equality iff $x_j = 0$ for all $j \in J$ as desired.
\end{proof}

\begin{lemma}\label{LowTemp}
    Let $\beta + 2\alpha >1$. The function $\varphi_N$ attains its minimum at $x= \overrightarrow m^*$ and $x= - \overrightarrow m^*$ where $\overrightarrow m^* = (m^*(\beta + 2 \alpha), \ldots ,m^*(\beta + 2 \alpha)) $.
\end{lemma}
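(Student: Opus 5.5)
We will reduce everything to the critical equation \eqref{fixedpoint} and then use a maximum/minimum-coordinate argument. First we need that a global minimizer of $\varphi_N$ exists. Since $|\log\cosh(t)|\le |t|$, we have
$$
\varphi_N(x)\ \ge\ \tfrac12 x^TAx-\|Ax\|_1\ \ge\ \tfrac12\lambda_{\min}(A)\|x\|_2^2-\sqrt{s_N}\,\|A\|_2\|x\|_2 .
$$
By positive definiteness of $A$ (recall $\beta>2\alpha$), $\varphi_N$ is therefore coercive and continuous, so the minimum on $\R^{s_N}$ is attained. Every minimizer is a critical point, hence satisfies $x_k=\tanh(\beta x_k+\alpha(x_{k-1}+x_{k+1}))$ for all $k$. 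Moreover $\varphi_N(-x)=\varphi_N(x)$, and by Lemma \ref{SameSign} all coordinates of a minimizer share a sign. So it suffices to show that the only minimizer in $[0,1]^{s_N}$ is $\overrightarrow m^*$; the statement for $-\overrightarrow m^*$ then follows by symmetry, and since both points have the same value, both are minimizers.

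Next we exclude zero coordinates. Suppose $x\ge0$ is a minimizer with $x_j=0$ for some $j$. The $j$-th critical equation reads $0=\tanh(\alpha(x_{j-1}+x_{j+1}))$. Since $\alpha>0$ and $x\ge0$, this forces $x_{j-1}=x_{j+1}=0$. Propagating around the cycle gives $x=0$. But by Lemma \ref{definiteness}, $\Hess\varphi_N(0)$ is not positive semi-definite when $\beta+2\alpha>1$, so $0$ is not a local, let alone global, minimizer. Hence every coordinate of the minimizer is strictly positive.

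Now we sandwich the coordinates using the scalar function $g(t)=\tanh((\beta+2\alpha)t)$. Since $\beta+2\alpha>1$ and $g$ is strictly concave on $[0,\infty)$, we have $g(t)>t$ on $(0,m^*)$ and $g(t)<t$ on $(m^*,\infty)$. Let $M=\max_k x_k$, attained at $k_0$. Monotonicity of $\tanh$ gives $M=\tanh(\beta M+\alpha(x_{k_0-1}+x_{k_0+1}))\le g(M)$, so $M\le m^*$. Likewise let $\mu=\min_k x_k>0$, attained at $k_1$. Then $\mu\ge g(\mu)$, so $\mu\ge m^*$. Consequently $m^*\le\mu\le M\le m^*$, i.e.\ $x=\overrightarrow m^*$.

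The main obstacle is this last step: checking the elementary concavity facts about $g$ and making sure the strict positivity of the minimum coordinate is genuinely available. The positivity comes from the propagation argument combined with Lemma \ref{definiteness}; without it the lower bound on $\mu$ would fail, since $t=0$ also solves $t=g(t)$. The coercivity step is routine.
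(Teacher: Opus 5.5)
Your proof is correct. The reduction to a nonnegative minimizer via Lemma \ref{SameSign} and symmetry is the same as in the paper. So is the exclusion of zero coordinates (propagation along the cycle plus Lemma \ref{definiteness}); the paper just writes the critical equation in $\artanh$-form rather than $\tanh$-form.

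The identification step is where you differ. The paper uses only the minimal coordinate: with $f(u)=\artanh(u)-(\beta+2\alpha)u$, it gets $f(x_{\min})\ge 0$, hence $x_j\ge x_{\min}\ge m^*$ and $f(x_j)\ge0$ for all $j$. It then sums all $s_N$ critical equations, using $\sum_k(x_{k-1}+x_{k+1})=2\sum_k x_k$, to obtain $\sum_k f(x_k)=0$. Your two-sided maximum principle ($M\le g(M)$ and $\mu\ge g(\mu)$) replaces that global summation identity with a second local comparison. It therefore only needs each row of $A$ to be nonnegative with sum $\beta+2\alpha$, whereas the paper's summation also uses the column sums, and it is arguably cleaner.

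You also prove that a global minimizer exists, via coercivity. The paper leaves this implicit, although its argument alone only shows that any minimizer must be $\pm\overrightarrow m^*$, not that the minimum is attained there.

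A caveat that applies equally to the paper: both arguments lean on Lemma \ref{SameSign}, and its printed proof does not hold up. It bounds $\log\cosh((Ay)_k)-\log\cosh((Ax)_k)$ from above by $(Ay)_k-(Ax)_k$, which uses the $1$-Lipschitz property in the wrong direction. A concrete failure: take $x=\eps e_1$ and $J=\{1\}$. Then $y=-x$, so $\varphi_N(x)=\varphi_N(y)$ by evenness, contradicting the claimed gap $-2\beta\eps$.

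The lemma's statement is nonetheless true, and you can make your proof self-contained as follows.
\begin{enumerate}
\item Set $\Psi(m)=-\frac12 m^TAm+\sum_k(\log 2-s(m_k))$ on $[-1,1]^{s_N}$. Legendre duality, $\log\cosh u=\sup_{|m|\le1}(mu-\log2+s(m))$, shows that $\varphi_N$ and $\Psi$ have the same infimum and the same global minimizers.
\item Since $A$ has nonnegative entries and $s$ is even, $\Psi(|m|)\le\Psi(m)$, with strict inequality as soon as $m_km_{k+1}<0$ for some $k$.
\item Hence if $x$ is a minimizer, so is $|x|$, and your argument applied to $|x|$ gives $|x|=\overrightarrow m^*$.
\item Strictness then rules out any sign change between neighbours, so $x=\pm\overrightarrow m^*$.
\end{enumerate}
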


\begin{proof}
From Lemma \ref{SameSign} together with (\ref{fixedpoint}) it follows that a minimizer $x$ satisfies
     \begin{equation} \label{AllZero}
     x_k = 0 \quad \text{for some $k\in \{1,\ldots,s_N\}$} \Leftrightarrow x_k = 0 \quad \text{for all $k\in \{1,\ldots,s_N\}$}.    
    \end{equation}
    Indeed, from (\ref{fixedpoint}) we obtain that for all $k = 1, \ldots, s_N$,
    \begin{equation}\label{artanh}
        \frac{\artanh(x_k) - \beta x_k}{\alpha} = x_{k+1} + x_{k-1}
    \end{equation}
    Assume that $x_1, \ldots, x_{s_N} \ge 0$ and $x_k=0$ for some $k \in \{1, \ldots, s_N\}$. (The proof works analogously for $x_1, \ldots, x_{s_N} \le 0$.) By the above it follows that 
    $$
     0 = \frac{\artanh(0) - \beta \cdot0}{\alpha} = x_{k+1} + x_{k-1},
    $$
    hence $x_{k+1} = -x_{k-1}$, but they have the same sign so they both have to be zero as well. Inductively, the right hand side in (\ref{AllZero}) follows.
    
    We know from Lemma \ref{definiteness} that $x=0$ is not a minimizer. The above observation together with Lemma \ref{SameSign} implies that all coordinates of a minimizer have to be either strictly positive or strictly negative. Assume that they are strictly positive (again the negative counterpart of the proof works analogously). Further assume that $x_k = \min\{ x_1, \ldots, x_{s_N}\}$. By (\ref{artanh}) it follows that
    $$
    \artanh(x_k) - \beta x_k =\alpha( x_{k+1} + x_{k-1}) \ge 2\alpha x_k
    $$
    which implies that $\artanh(x_j) - \beta x_j \ge 2 \alpha x_j$ for all $j \in \{1, \ldots, s_N\}$ by taking a look at the function $f(x) = \artanh (x) - (\beta + 2\alpha)x$, for $\beta + 2\alpha >1$ and $x \in \R_{>0}$. Finally, summing over the equations (\ref{artanh}) for all $k$ yields 
    \begin{flalign*}
        &\sum_{k=1}^{s_N} \frac{\artanh(x_k) - \beta x_k}{\alpha} = \sum_{k=1}^{s_N} x_{k+1} + x_{k-1} = 2 \sum_{k=1}^{s_N}x_k  \\
\Leftrightarrow \quad & 0 = \sum_{k=1}^{s_N} \artanh(x_k) - (\beta + 2\alpha)x_k
    \end{flalign*}
    But we have seen above that all summands in this last sum are non-negative, hence they all have to be zero, i.e. for all $k$, $x_k$ has to satisfy the Curie-Weiss equation which leads to the desired result.
\end{proof}

\section{Proof of Theorem \ref{CLT}\,(i)}
Here, we will prove the CLT in the high temperature regime.
In the previous section we have already computed the global minimizers of the density function $\varphi_N$, so that we are now in the position proof Theorem \ref{CLT}. Throughout the whole section we will assume that $\beta + 2\alpha <1$. Let us first provide some first steps and the idea of the proof.

\subsection{Method of proof}\label{subsec:method}
Our goal is to show that all finite marginals of the distribution of $\widetilde{m} \coloneqq \sqrt{\frac{N}{s_N}}m$ under $\mu_{N, \alpha, \beta}$ converge weakly to the Normal distribution $\mathcal{N}(0,\Sigma)$ on $\R^\N$, or on $\R^{\lim_{N}s_N}$ respectively, where $\Sigma$ is the covariance given by (the infinite version) of $(I-A)^{-1}$. 
Let $Z$ be an $s_N$ dimensional Gaussian vector (independent of $\tilde m$) with distribution $\mathcal{N}\left(0,  A^{-1} \right)$. For now, we will be interested in the distribution of $Z + \widetilde{m}$ and we shall remove the Gaussian part of $Z$ in the last step of the proof.
By the Cram\'er-Wold device and using the Laplace transform (the moment generating function), we are interested in the limit $\E[\exp \{ t^T (Z + \widetilde{m})\}]$ for any fixed $t\in \R^{s_N}\subseteq \R^\N$ with only finitely many non-zero entries. By Lemma \ref{phi} and a change of variables we have
\begin{align*}
    \E\left[\exp\left\{ t^T (Z + \widetilde{m}) \right\}\right] 
    =& z_N \int_{\R^{s_N}} \exp\left\{ -\frac{N}{s_N}\varphi_N\left(\sqrt{\frac{s_N}{N}}x\right)  + t^Tx\right\}\d^{s_N}x  \\
    =&\frac{ \int_{\R^{s_N}}
\exp\Bigg\{-\frac{1}{2} x^T A x + \frac{N}{s_N}\sum_{k=1}^{s_N}
\log\cosh\Big(\sqrt{\frac{s_N}{N}}(A x)_k\Big)
+ t^Tx
\Bigg\}\d^{s_N}x }{ \int_{\R^{s_N}}
\exp\Bigg\{
-\frac{1}{2} x^T A x
+ \frac{N}{s_N}\sum_{k=1}^{s_N}
\log\cosh\Big(\sqrt{\frac{s_N}{N}}(A x)_k\Big)
\Bigg\}\d^{s_N}x },
\end{align*}
where we write $(Ax)_k\coloneqq x^TAe_k$. We will show that the above converges to the moment generating function of the appropriate normal distribution, which is given by $\exp\left\{ -\frac{1}{2}t^T(A-A^2)^{-1}t\right\}$. To that end, first note that close to the origin the $\log\cosh$ behaves like a quadratic function with deviation
\begin{align}\label{eq:G}
G(y)\coloneqq\log\cosh(y)\;-\;\frac{y^2}{2}, \quad y\in\R.
\end{align}
Note that 
	\begin{align}\label{eq:logcosh}
		 -\frac{y^4}{12}\le	G(y)\le 0 \quad\text{ for all }y\in\R
	\end{align}
 follows for instance from checking $y=0$ and the first two derivatives. 
Plugging this into the above integrals and using that $\sum_{k=1}^{s_N} (A x)_k^2 = x^TA^2 x$, the above becomes
\begin{flalign}\label{eq:MGFrho}
 \E\left[\exp\left\{ t^T (Z + \widetilde{m}) \right\}\right] 
= \frac{ \int_{\R^{s_N}}
\exp\Big\{-\frac{1}{2} \|x\|^2 + \frac{N}{s_N}\sum_{k=1}^{s_N}
G\Big(\sqrt{\frac{s_N}{N}}(C x)_k\Big)
+ t^T(A-A^2)^{-\frac{1}{2}}x
\Big\}\d^{s_N}x }{ \int_{\R^{s_N}}
\exp\Big\{
-\frac{1}{2} \|x\|^2
+ \frac{N}{s_N}\sum_{k=1}^{s_N}
G\Big(\sqrt{\frac{s_N}{N}}(C x)_k\Big)
\Big\}\d^{s_N}x },
\end{flalign}
where $C\coloneqq A(A-A^2)^{-\frac{1}{2}}=A^{1/2}(I-A)^{-1/2}$ is some symmetric positive definite matrix and we did the change of variables $x \mapsto (A-A^2)^{\frac{1}{2}}x$ in the numerator and denominator. 

Now, we would like to apply Laplace's method to show that the dominating part of the integral is determined by the minimizer of $\varphi_N$ and is precisely given by our expected limit. A direct application however is impossible due to the growing dimension $s_N\to\infty$ of integration. Naively (as in Laplace' method for finite dimensions) one would like to restrict the domain of integration to some ball of large radius $R\gg 0$ and use a Taylor approximation of $\log\cosh$ around 0. The difficulty comes from the curse of dimensionality: an $s_N$-dimensional Normal distribution has most of its mass outside of this ball, and is concentrated close to a radius of order $\sqrt {s_N}$. 
In order to circumvent this problem, we need to 
\begin{enumerate}
    \item carefully restrict the area of integration to controllable regions (see $D_N$ below),
    \item crucially make use of the fact that $t$ has only finitely many coordinates in order to view \eqref{eq:MGFrho} as a perturbation of a Gaussian distribution,
    \item combine the above and remove the contribution of $Z\sim \mathcal N (0,A^{-1})$ to finish the proof of Theorem \ref{CLT}.
\end{enumerate}
We will deal with these steps in the following subsections.

\subsection{Restricting the domain of integration}\label{subsec:integration}

To this end, we need the following representation.

\begin{lemma}\label{lem:log-concave}
We can write the above as 
$$
\E\left[\exp\left\{ t^T (Z + \widetilde{m}) \right\}\right] =\E_{\rho_N}\left[\exp\{{t^T(A-A^2)^{-\frac{1}{2}}X}\}  \right], 
$$
where $X$ denotes a $s_N$-dimensional random variable whose distribution is strongly log-concave with density
\begin{equation}\label{def:rho_N}
\rho_N(x) \propto \exp\Bigg\{
-\frac{1}{2} \|x\|^2
+ \frac{N}{s_N}\sum_{k=1}^{s_N}
G\Big(\sqrt{\frac{s_N}{N}}(C x)_k\Big)
\Bigg\}.
\end{equation}
\end{lemma}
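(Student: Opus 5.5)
The representation is essentially a rereading of \eqref{eq:MGFrho}, so the proof has two parts: confirm the identity, and verify strong log-concavity of $\rho_N$.

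\emph{The identity.} I would start from the formula obtained via Lemma \ref{phi}. In it I substitute $\log\cosh(y) = \frac{y^2}{2} + G(y)$ with $y=\sqrt{s_N/N}\,(Ax)_k$. The quadratic parts then sum to $\frac12 \sum_k (Ax)_k^2 = \frac12 x^TA^2x$, and they combine with $-\frac12 x^TAx$ into $-\frac12 x^T(A-A^2)x$. Since $\beta+2\alpha<1$, Lemma \ref{definiteness} shows that $A-A^2$ is positive definite. Hence $(A-A^2)^{1/2}$ exists and is invertible, and the substitution $x\mapsto (A-A^2)^{-1/2}x$ is a valid linear change of variables. Its Jacobian appears in both numerator and denominator and cancels. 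All matrices involved are polynomials in the circulant matrix $A$, so they commute. Therefore $A(A-A^2)^{-1/2}=A^{1/2}(I-A)^{-1/2}=C$ is symmetric positive definite, and the linear term becomes $t^T(A-A^2)^{-1/2}x$. This is exactly \eqref{eq:MGFrho}. The ratio of integrals is $\E_{\rho_N}[\exp\{t^T(A-A^2)^{-1/2}X\}]$ once we check that the denominator is finite. This holds because $G\le 0$ by \eqref{eq:logcosh}, so the integrand is dominated by a standard Gaussian density. The numerator is finite for the same reason.

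\emph{Strong log-concavity.} Write $-\log\rho_N(x) = \frac12\|x\|^2 - \frac{N}{s_N}\sum_k G(\sqrt{s_N/N}(Cx)_k) + \text{const}$. Differentiating twice gives
\begin{equation*}
\Hess(-\log\rho_N)(x) = I - C\,\diag\big(G''(\sqrt{s_N/N}(Cx)_k)\big)_k\,C,
\end{equation*}
where the factors $N/s_N$ and $s_N/N$ cancel. Here $G''(y)=\sech^2(y)-1=-\tanh^2(y)\in(-1,0]$. The middle term is therefore $C\,\diag(\tanh^2(\cdot))\,C$, which is positive semidefinite with spectral norm at most $\|C\|_2^2$. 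The Hessian thus satisfies
\begin{equation*}
\Hess(-\log\rho_N)(x) = I + C\,\diag(\tanh^2(\cdot))\,C \succeq I,
\end{equation*}
uniformly in $x$ and $N$. This gives strong log-concavity with constant $1$. It is worth stating explicitly that the bound does not depend on the dimension $s_N$, since that uniformity is what the later concentration arguments need.

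\emph{Main obstacle.} Nothing here is deep. The only points requiring care are the sign of $G''$ (concave perturbation) and the commutation of $A$, $A^{1/2}$ and $(I-A)^{-1/2}$. Both are immediate: $G''=-\tanh^2\le 0$, and the commutation follows from simultaneous diagonalization of circulant matrices in the Fourier basis.
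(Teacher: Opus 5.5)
Your proposal is correct and matches the paper's proof. The paper computes the same Hessian in the form $I + C\big(I-\diag(\sech^2(\cdot))\big)C$, which is exactly your $I + C\,\diag(\tanh^2(\cdot))\,C \succeq I$. Your explicit re-derivation of the identity from \eqref{eq:MGFrho} and your integrability check via $G\le 0$ are minor additions that the paper leaves implicit.
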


\begin{proof} A (strongly) log-concave measure has a density of the form
$
e^{-U(x)}
$
for where $U(x)$ is a (strongly) convex function.  
Similar to \eqref{Hess}, we have
\begin{flalign*}
\Hess U(x) =& \Hess \left(\frac{1}{2} \|x\|^2
- \frac{N}{s_N}\sum_{k=1}^{s_N}
G\Big(\sqrt{\frac{s_N}{N}}(C x)_k\Big) \right) \\
=& \Hess \left(\frac{1}{2} x^T(I+C^2)x
- \frac{N}{s_N}\sum_{k=1}^{s_N}
\log\cosh \Big(\sqrt{\frac{s_N}{N}}(C x)_k\Big) \right) \\
=&I+C^2 - \sum_{k=1}^{s_N}\sech^2 \Big(\sqrt{\frac{s_N}{N}}(C x)_k\Big) C e_k e_k^T C \\
=& I + C\left(I - \sum_{k=1}^{s_N}\sech^2 \Big(\sqrt{\frac{s_N}{N}}(C x)_k\Big)  e_k e_k^T  \right)C.
\end{flalign*}
Note that $0\le \sech^2\le 1$, hence the diagonal matrix $I - \sum_{k=1}^{s_N}\sech^2 \Big(\sqrt{\frac{s_N}{N}}(C x)_k\Big)  e_k e_k^T$ is component-wise non-negative and since $C^2$ is positive definite, it follows that $\Hess U(x)$ is strictly positive definite with a uniform lower bound $I$ in terms of Loewner order (i.e.~the least singular value is at least $1$). 
\end{proof}

Let us denote by $B_R=\{x\in\R^{s_N}:\|x\|\le R\}$ the centered ball of radius $R$ in $s_N$ dimensions. Define the set $\mathcal{I}_d \coloneqq \{1, \ldots, d, s_N-d+1, \ldots, s_N \}$ and let projection $P_{\mathcal{I}_d}\colon \R^{s_N} \to \R^{2d}$ denote the the projection onto the set $\mathcal{I}_d$, i.e.\ onto the first and last $d$ coordinates, where $1\le 2d \le s_N$. Further define $$B_R^{(2d)}=\{x\in\R^{s_N}: \|P_{\mathcal{I}_d}(x)\|=\Big(\sum_{j\in \mathcal{I}_d} x_j^2\Big)^{1/2}\le R\}$$
the cylinder set of a ball in the dimensions  $ 1, \ldots  ,d$ and $s_N - d+1, \ldots, s_N$ for $d<s_N$. The following crucial lemma will lower the effective dimension to a logarithmic number, thus \emph{purging the curse of dimensionality}.

\begin{lemma}\label{lemma:D_N-restriction}
 Let $d_N := \lceil c_6\log N\rceil$ for some constant $c_6>0$ (to be defined later) and define
 \begin{flalign*}
     D_N := \begin{cases}
          B_{2\sqrt{s_N}}\cap B^{(2d_N)}_{2\sqrt {2d_N}} & \quad \text{if } 2 d_N\le s_N \\
           B_{2\sqrt{2d_N}} & \quad \text{if } 2d_N > s_N.
     \end{cases}
 \end{flalign*}
Then, there exists a universal constant $c_1>0$ such that 
$$
\E_{\rho_N}\Big[ \1_{ D_N^c}(X) \exp\{t^T(A-A^2)^{-\frac{1}{2}}X\}\Big] =\mathcal O (\exp\{-c_1 d_N\}) \longrightarrow 0,
$$
as $N\to\infty$.
\end{lemma}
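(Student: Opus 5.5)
The plan is to exploit Lemma \ref{lem:log-concave}: the density $\rho_N$ from \eqref{def:rho_N} is strongly log-concave with $\Hess U\succeq I$. By the Bakry--\'Emery criterion this gives two facts uniformly in $N$ and $s_N$.
\begin{enumerate}
\item Gaussian concentration (via Herbst's argument): for every $1$-Lipschitz $f\colon\R^{s_N}\to\R$ and every $r>0$,
$$
\rho_N\big(f(X)-\E_{\rho_N} f(X)\ge r\big)\le e^{-r^2/2}.
$$
\item Sub-Gaussian moment generating functions: $\E_{\rho_N}[e^{v^TX}]\le \exp\{v^T\E_{\rho_N}X+\|v\|^2/2\}$ for every $v\in\R^{s_N}$.
\end{enumerate}
Moreover, $G$ from \eqref{eq:G} is even, so $\rho_N$ is invariant under $x\mapsto -x$, and hence $\E_{\rho_N}X=0$. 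Finally, the Brascamp--Lieb inequality gives $\Cov_{\rho_N}(X)\preceq (\Hess U)^{-1}\preceq I$.

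First I would separate the exponential weight from the indicator by Cauchy--Schwarz. Set $u:=(A-A^2)^{-1/2}t$. By \eqref{eq:eigenvalues} and the case analysis in the proof of Lemma \ref{definiteness}, the smallest eigenvalue of $A-A^2$ is bounded below by $\min\{\beta+2\alpha-(\beta+2\alpha)^2,\ \beta-2\alpha-(\beta-2\alpha)^2\}>0$, uniformly in $N$. Therefore $\|u\|\le c\|t\|$ is bounded uniformly in $N$, since $t$ is fixed with finitely many nonzero entries. Then
$$
\E_{\rho_N}\big[\1_{D_N^c}(X)e^{u^TX}\big]\le \rho_N(D_N^c)^{1/2}\,\E_{\rho_N}\big[e^{2u^TX}\big]^{1/2}\le \rho_N(D_N^c)^{1/2}\,e^{\|u\|^2},
$$
and the last factor is $\mathcal O(1)$.

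It remains to bound $\rho_N(D_N^c)$ by $e^{-d_N}$ up to constants; this yields $c_1=1/2$ after taking the square root. I would use a union bound and concentration of the $1$-Lipschitz maps $x\mapsto\|x\|$ and $x\mapsto\|P_{\mathcal I_{d_N}}x\|$. By Brascamp--Lieb, $\E\|X\|\le(\E\|X\|^2)^{1/2}=(\operatorname{tr}\Cov X)^{1/2}\le\sqrt{s_N}$, and likewise $\E\|P_{\mathcal I_{d_N}}X\|\le\sqrt{2d_N}$.
\begin{itemize}
\item Case $2d_N\le s_N$: $\rho_N(\|X\|>2\sqrt{s_N})\le e^{-s_N/2}\le e^{-d_N}$, and $\rho_N(\|P_{\mathcal I_{d_N}}X\|>2\sqrt{2d_N})\le e^{-2d_N/2}=e^{-d_N}$.
\item Case $2d_N>s_N$: $\E\|X\|\le\sqrt{s_N}<\sqrt{2d_N}$, so $\rho_N(\|X\|>2\sqrt{2d_N})\le e^{-d_N}$.
\end{itemize}
Combining the cases gives $\rho_N(D_N^c)\le 2e^{-d_N}$, and hence the claimed bound $\mathcal O(e^{-d_N/2})$.

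The main point to verify carefully is that the concentration constants are dimension-free. This rests entirely on the uniform bound $\Hess U\succeq I$ from Lemma \ref{lem:log-concave}, together with the dimension-free bounds on the mean (by symmetry) and on the covariance (by Brascamp--Lieb). Without these, the $\sqrt{s_N}$-shell phenomenon could not be controlled. Everything else is routine bookkeeping.
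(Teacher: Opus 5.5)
Your proposal is correct and follows the same route as the paper. Both arguments use the strong log-concavity $\Hess U\succeq I$ to get dimension-free sub-Gaussian concentration for the $1$-Lipschitz maps $x\mapsto\|x\|$ and $x\mapsto\|P_{\mathcal I_{d_N}}x\|$, a covariance bound $\Cov(X)\preceq I$ to control their means, symmetry plus a sub-Gaussian moment generating function for $u^TX$, and Cauchy--Schwarz to combine the pieces. The differences are cosmetic: you use Bakry--\'Emery/Herbst and Brascamp--Lieb where the paper cites Vershynin's Theorem 5.2.11 and the Poincar\'e inequality, which gives you the explicit constant $c_1=1/2$. You are also slightly more careful than the written proof in keeping the square roots in Cauchy--Schwarz and in checking that $\|(A-A^2)^{-1/2}t\|$ is bounded uniformly in $N$.
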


\begin{proof}
By Theorem 5.2.11 in \cite{Vershynin}, for every Lipschitz function $f \colon \R^{s_N} \to \R$, it holds that 
\begin{equation}\label{eq:subgauss}
    \|f(X) - \E{\rho_N}[f(X)]\|_{\psi_2}\le c_2\|f\|_{\lip},
\end{equation}
where $\|\cdot\|_{\psi_2}$ denotes the subgaussian norm and $c_2>0$ is some universal constant. The constant depends on the convexity of $U$ from the proof of Lemma \ref{lem:log-concave}, whose lower bound we found to be $1$. In particular, $f(X)- \E_{\rho_N}[f(X)]$ is a subgaussian random variable. We want to apply this to the norm of the random vector and the norm of the vector projected onto some $q_N\in\N$ of its coordinates, respectively. The norm of the projection $P_{\mathcal{I}_{q_N}}\colon \R^{s_N} \to \R^{2q_N}$ defines a $1$-Lipschitz function  $f(x)= \|P_{\mathcal{I}_{q_N}}(x) \|$.
By \eqref{eq:subgauss}, $
\|P_{\mathcal{I}_{q_N}}(x) \| - \E_{\rho_N}\|P_{\mathcal{I}_{q_N}}(x) \|
$
is subgaussian with
$$
\| \|P_{\mathcal{I}_{q_N}}(x) \| - \E_{\rho_N}\left[\|P_{\mathcal{I}_{q_N}}(x) \|\right]\|_{\psi_2} \le c_2,
$$
where again $X$ denotes a random variable with distribution given by the probability measure with density $\rho_N$.
Let us compute $\E_{\rho_N}[f(X)]$. The Poincar\'e inequality \cite[Equation (1.6)]{bobkov:ledoux:2000} implies for any $v\in \R^{s_N}$, 
$$
v^T\Cov(X)v = \Var(v^TX) \le c_N \int_{\R^{s_N}} \rho_N(x) \|\nabla f_v(x)\|^2 \d^{s_N}x = \|v\|^2,
$$
where by $f_v$ we denote the function $x\mapsto v^Tx$.
It follows by Jensen's inequality that
$$
    \E_{\rho_N}\left[ \|P_{\mathcal{I}_{q_N}}(X) \| \right]^2 \le  \E_{\rho_N}\left[\|P_{\mathcal{I}_{q_N}}(X) \|^2 \right] = \sum_{j \in \mathcal{I}_{q_N}}\E_{\rho_N}[X_j^2] = \sum_{j \in \mathcal{I}_{q_N}}e_k^T\Cov(X)e_k \le 2q_N.
$$
Therefore, we have $ \E_{\rho_N}\left[ \|P_{\mathcal{I}_{q_N}}(X) \| \right]\le \sqrt{2q_N}$. The standard subgaussian tail probability as in \cite[Proposition 2.6.6]{Vershynin} implies the existence of some other universal constant $c_1>0$ such that 
\begin{align}\label{eq:Lip1}
    \P\Big( \|P_{\mathcal{I}_{q_N}}(X)\| >2\sqrt{2q_N}\Big) &= \P\Big( \|P_{\mathcal{I}_{q_N}}(X)\| - \E_{\rho_N}[\|P_{\mathcal{I}_{q_N}}(X)\|] > 2\sqrt{2q_N} - \E_{\rho_N}[\|P_{\mathcal{I}_{q_N}}(X)\|] \Big) \nonumber \\
    &\le \exp\left\{ -{c_1}{(2\sqrt{2q_N} - \E_{\rho_N}[\|P_{\mathcal{I}_{q_N}}(X)\|] )^2} \right\} \le  \exp\left\{ -c_1{2q_N} \right\} .
\end{align}
In order to restrict the integration domain to $D_N$ as claimed, we need a second observation. To that end, we will apply \cite[Theorem 5.2.11]{Vershynin} to another Lipschitz function, namely $f_{\tilde{t}} \colon x \mapsto \tilde{t}^Tx$, for the vector $\tilde t \coloneqq (A-A^2)^{-\frac{1}{2}}t\in \R^{s_N}$, whose Lipschitz norm $\|\tilde{t}\|$ is a fixed constant. Note that $\E_{\rho_N}[\tilde{t}^TX]=0 $ by symmetry, thus $\|\tilde{t}^TX\|_{\psi_2} \le {c_2\|\tilde{t}\|}$ by \eqref{eq:subgauss}. Since $\tilde{t}^TX$ is a subgaussian random variable, we obtain a uniform upper bound
\begin{align}\label{eq:Lip2}
	\E_{\rho_N}\Big[ e^{\tilde{t}^TX} \Big] \le \exp\left\{ c_3\|\tilde{t}\|^2\right\}<\infty.
\end{align} 
Finally, let us put together the observations \eqref{eq:Lip1} and \eqref{eq:Lip2}.

Let us consider the case $2d_N\le s_N$, the other case being analogous and simpler. First we will apply \eqref{eq:Lip1} to $q_N = \frac{s_N}{2}$, i.e. we are considering the norm of the whole vector. It follows, by Cauchy-Schwarz 
 \begin{flalign*}
     \E_{\rho_N}\Big[ \1_{ B_{2\sqrt{s_N}}^c} e^{\tilde{t}^TX } \Big] &\le \E_{\rho_N}\Big[ e^{2\tilde{t}^TX } \Big] \P\Big( \|X\| >2\sqrt{s_N}\Big) =\mathcal O (\exp\{-c_1 s_N\}). 
 \end{flalign*}
Now, apply \eqref{eq:Lip1} for any other $q_N=d_N\to\infty$, then again by Cauchy-Schwarz,
\begin{align*}
\E_{\rho_N}\Big[ \1_{ D_N^c} e^{2\tilde{t}^TX } \Big] \le c_{4}\exp\left\{ 4c_3\|\tilde{t}\|^2 \right\} \exp\left\{ -c_1 2d_N \right\} \longrightarrow 0\qedhere
\end{align*}
\end{proof}

\subsection{Approximating the perturbation}\label{subsec:approx}
By \eqref{eq:MGFrho} and Lemma \ref{lemma:D_N-restriction}, we have shown so far that 
\begin{flalign}\label{eq:zwitscherndeszwischenergebnis}
&\E\left[\exp\left\{ t^T (Z + \widetilde{m}) \right\}\right] \\
=&\frac{ \int_{D_N}
\exp\Big\{-\frac{1}{2} \|x\|^2 + \frac{N}{s_N}\sum_{k=1}^{s_N}
G\Big(\sqrt{\frac{s_N}{N}}(C x)_k\Big)
+ t^T(A-A^2)^{-\frac{1}{2}}x
\Big\}\d^{s_N}x }{ \int_{D_N}
\exp\Big\{
-\frac{1}{2} \|x\|^2
+ \frac{N}{s_N}\sum_{k=1}^{s_N}
G\Big(\sqrt{\frac{s_N}{N}}(C x)_k\Big)
\Big\}\d^{s_N}x }\big(1+o(1)\big).\nonumber
\end{flalign}
 Now, perform another change of variables in the numerator by shifting $x\mapsto x-(A-A^2)^{-\frac{1}{2}}t\in \tilde D_N$ for $\tilde D_N=D_N-(A-A^2)^{-\frac{1}{2}}t$ and the above becomes
$$
 \exp\left\{-\frac{1}{2}t^T(A-A^2)^{-1}t \right\}\frac{ \int_{\tilde D_N}
\exp\Bigg\{-\frac{1}{2} \|x\|^2 + \frac{N}{s_N}\sum_{k=1}^{s_N}
G\Big(\sqrt{\frac{s_N}{N}}(C x)_k + \sqrt{\frac{s_N}{N}}\Big((I-A)^{-1}t \Big)_k\Big)
\Bigg\}\d^{s_N}x }{ \int_{\tilde D_N}
\exp\Bigg\{
-\frac{1}{2} \|x\|^2
+ \frac{N}{s_N}\sum_{k=1}^{s_N}
G\Big(\sqrt{\frac{s_N}{N}}(C x)_k\Big)
\Bigg\}\d^{s_N}x },
$$
since $C(A-A^2)^{-\frac{1}{2}} = A(A-A^2)^{-1} = (I-A)^{-1}$. Therefore, we have reduced the proof of Theorem \ref{CLT}\,(i) to showing that the above quotient tends to 1 as $N\to \infty$. 
Note that the numerator and denominator differ only by the summand $\sqrt{\frac{s_N}{N}}\Big((I-A)^{-1}t \Big)_k$ inside the function $G$. Hence, we have to show that this can be neglected in the limit. Since $t$ has only finitely many non-zero coordinates, clearly the summand goes to zero as $N\to \infty$, so we can do the Taylor expansion
\begin{align}\label{eq:Taylorrr}
\sum_{k=1}^{s_N}G\Big(\sqrt{\tfrac{s_N}{N}}(C x)_k + \sqrt{\tfrac{s_N}{N}}\big((I-A)^{-1}t\big)_k\Big) = \sum_{k=1}^{s_N}G\Big(\sqrt{\tfrac{s_N}{N}}(C x)_k\Big) + G'\big( \xi_k \big)\sqrt{\frac{s_N}{N}}\Big( (I-A)^{-1} t\Big)_k,
\end{align}
where $\xi_k$ are intermediate values given by $\xi_k=\sqrt{\tfrac{s_N}{N}}(C x)_k + \lambda_k\sqrt{\tfrac{s_N}{N}}\big((I-A)^{-1}t\big)_k $  for some $0\le\lambda_k\le 1$ and we have to show that the second term goes to zero. 
\begin{lemma}\label{lem:G'=o}
If $s_N = o\left({N}{(\log N)^{-5/2}} \right)$ there exists $c_6>0$ such that for $d_N=c_6\log N$ it holds
$$\frac N {s_N}\sum_{k=1}^{s_N}G'\Big( \xi_k \Big)\sqrt{\frac{s_N}{N}}\Big( (I-A)^{-1} t\Big)_k=o(1)$$
uniformly in $x\in B_{\max\{4\sqrt{s_N}, 4 \sqrt{2d_N} \}}\cap B^{(2d_N)}_{4\sqrt {2d_N}}$.
\end{lemma}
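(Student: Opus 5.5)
Write $w:=(I-A)^{-1}t$ and $\eps_N:=\sqrt{s_N/N}$. The quantity to control is
\[
S_N(x)=\sqrt{\tfrac{N}{s_N}}\sum_{k=1}^{s_N} G'(\xi_k)\,w_k .
\]
From \eqref{eq:logcosh} we have $G'(y)=\tanh y-y$, and hence $|G'(y)|\le \min\{|y|^3/3,\,2|y|\}\le |y|^3/3$. This gives
\[
|S_N(x)|\le \tfrac{1}{3}\,\eps_N^{-1}\sum_k |\xi_k|^3|w_k|
\lesssim \eps_N^{2}\sum_k \big(|(Cx)_k|^3+|w_k|^3\big)|w_k|.
\]
The $|w_k|^4$ part is $O(\eps_N^2)$, because $w\in\ell^2\cap\ell^4$ uniformly in $N$. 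The main term is therefore $\eps_N^2\sum_k|(Cx)_k|^3|w_k|$.

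The plan uses the exponential decay of the circulant matrices involved. The resolvent $(I-A)^{-1}$ has entries decaying like $r^{|i-j|}$ in cyclic distance, with $r=\frac{(1-\beta)-\sqrt{(1-\beta)^2-4\alpha^2}}{2\alpha}<1$, as in Theorem \ref{CLT}(i). Since $t$ is supported in finitely many coordinates near index $1$ (WLOG inside $\{1,\dots,K\}$), we get $|w_k|\le c\,r^{\mathrm{dist}(k,1)}$. The matrix $C=A^{1/2}(I-A)^{-1/2}$ is an analytic function of the circulant $A$, whose spectrum lies in $[\beta-2\alpha,\beta+2\alpha]\subset(0,1)$. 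Hence its symbol $\lambda\mapsto \lambda^{1/2}(1-\lambda)^{-1/2}$ evaluated at $\beta+2\alpha\cos\theta$ extends holomorphically to a strip, and $|C_{kj}|\le c\,\rho^{\mathrm{dist}(k,j)}$ for some $\rho<1$. This bound is uniform in $s_N$, by the standard Paley--Wiener/contour-shift argument for circulants. In particular $\|C\|_2\le c$.

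Split the sum over $k$ into $k\in\mathcal I_{d_N/2}$ (near index $1$, cyclically) and its complement.

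\emph{Near part.} For these $k$, decompose $(Cx)_k=\sum_{j\in\mathcal I_{d_N}}C_{kj}x_j+\sum_{j\notin\mathcal I_{d_N}}C_{kj}x_j$. The first piece is at most $\|C\|\,\|P_{\mathcal I_{d_N}}x\|\le c\sqrt{d_N}$ on $B^{(2d_N)}_{4\sqrt{2d_N}}$. The second piece is at most $\big(\sum_{j\notin\mathcal I_{d_N}}C_{kj}^2\big)^{1/2}\|x\|\le c\rho^{d_N/2}\sqrt{s_N}$, which is $O(1)$ once $c_6$ is chosen so that $\rho^{c_6\log N/2}\le N^{-1/2}$; this is where $c_6$ gets fixed. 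So $|(Cx)_k|\le c\sqrt{d_N}$ on the near part, and using $\sum_k|w_k|<\infty$ its contribution is
\[
\eps_N^2\, d_N^{3/2}\sum_k|w_k|\lesssim \frac{s_N}{N}(\log N)^{3/2}.
\]

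\emph{Far part.} Here $|w_k|\le c\,r^{d_N/2}\le N^{-M}$ for $c_6$ large. Using only the crude bounds $|(Cx)_k|\le\|C\|\|x\|\le c\sqrt{s_N}$ (or $c\sqrt{d_N}$ in the case $2d_N>s_N$), the contribution is at most $\eps_N^2 s_N\cdot s_N^{3/2}N^{-M}=o(1)$.

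Combining the two parts gives $|S_N(x)|\lesssim \frac{s_N}{N}(\log N)^{3/2}+o(1)$ uniformly on the stated set. This is $o(1)$ under the hypothesis; the hypothesis $s_N=o(N(\log N)^{-5/2})$ is stronger than what this estimate needs. The extra $\log$ factors presumably enter elsewhere in the paper, or through the cruder bound $|G'(y)|\le |y|^3$ combined with $\ell^1$ rather than $\ell^\infty$ control of $(Cx)_k$. In any case the bound suffices.

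The main obstacle is the \emph{uniform-in-$s_N$} exponential off-diagonal decay of $C$, which requires the analyticity argument for the symbol, together with the choice of $c_6$ making both $\rho^{d_N/2}\sqrt{s_N}=O(1)$ and $r^{d_N/2}$ polynomially small. If this is too delicate, the fallback is to expand $C=A^{1/2}(I-A)^{-1/2}$ in a Neumann-type power series in $A-\beta I$ (band matrices). Truncating at degree $\asymp\log N$ then gives a banded approximation with an exponentially small remainder, which produces the same decay.
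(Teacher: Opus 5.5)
Your proposal is correct and is essentially the paper's proof: you split $k$ by its distance to the support of $t$ using the exponential decay of $(I-A)^{-1}$, and for near $k$ you split $(Cx)_k$ according to whether $j\in\mathcal I_{d_N}$, using the decay of $C$ for far $j$ and the cylinder constraint $\|P_{\mathcal I_{d_N}}x\|\le 4\sqrt{2d_N}$ for near $j$. The differences are refinements (the global bound $|G'(y)|\le |y|^3/3$ instead of a local Taylor expansion, a self-contained symbol argument instead of citing Benzi--Razouk, and summing $|((I-A)^{-1}t)_k|$ in $\ell^1$ where the paper bounds it by a constant on $\widetilde d_N\asymp\log N$ indices --- which is exactly where the paper's extra $\log N$ comes from), so your estimate $O(\frac{s_N}{N}(\log N)^{3/2})$ is slightly sharper than the paper's $O(\frac{s_N}{N}(\log N)^{5/2})$.
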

Note that $D_N,\tilde D_N\subseteq  B_{\max\{4\sqrt{s_N}, 4 \sqrt{2d_N} \}}\cap B^{(d_N)}_{4\sqrt {d_N}}$. 

A key argument for the proof of this lemma is the following property of the matrices $C$ and $(I-A)^{-1}$. Namely both $C$ and $(I-A)^{-1}$ are symmetric circulant matrices with exponentially decaying entries away from the diagonal (and the corner).

\begin{lemma}\label{lem:matrix_decay}
The entries of $(I-A)^{-1}$ satisfy
\begin{equation}\label{eq:IAinv}
\big((I-A)^{-1}\big)_{j,k}
=\frac{ \kappa_1^{\abs{j-k}} + \kappa_1^{s_N-\abs{j-k}} }{(1-\kappa_1^{s_N}) \sqrt{(1-\beta)^2-4\alpha^2} },
\qquad 
\kappa_1=\frac{(1-\beta)-\sqrt{(1-\beta)^2-4\alpha^2}}{2\alpha}.
\end{equation}
It is easy to see that $0<\kappa_1<1$.  Likewise, for $C$ there exists $0<\kappa_2< 1$, such that
\begin{equation}\label{eq:Cexp}
C_{j,k}
= c_{5}\Big(
\kappa_2^{|j-k|} +  \kappa_2 ^{(s_N-|j-k|)}\Big).
\end{equation}
\end{lemma}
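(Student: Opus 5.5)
The plan is to treat both matrices through their common circulant structure and the discrete Fourier transform. Every symmetric circulant matrix $M=f(A)$ has the same eigenvectors $e^{2\pi i jl/s_N}$ as $A$. Its entries are therefore
\[
M_{j,k}=\frac{1}{s_N}\sum_{l=1}^{s_N} f(\lambda_l)\cos\Big(2\pi\frac{l(j-k)}{s_N}\Big),\qquad \lambda_l=\beta+2\alpha\cos\Big(2\pi\frac{l}{s_N}\Big).
\]
By Poisson summation, such a discrete Fourier sum equals the periodization $\sum_{n\in\Z}\hat g(j-k+ns_N)$ of the Fourier coefficients $\hat g(d)=\frac1{2\pi}\int_0^{2\pi} g(\theta)\cos(d\theta)\,\d\theta$, where $g(\theta)=f(\beta+2\alpha\cos\theta)$. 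Once $\hat g(d)=c\,\kappa^{|d|}$ is known for $d\in\Z$, summing the two geometric series in $n$ gives exactly
\[
\sum_{n\in\Z} c\,\kappa^{|d+ns_N|}=c\,\frac{\kappa^{d}+\kappa^{s_N-d}}{1-\kappa^{s_N}},\qquad 0\le d=|j-k|\le s_N,
\]
which is the shape claimed in both \eqref{eq:IAinv} and \eqref{eq:Cexp}.

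\emph{Step 1: the matrix $(I-A)^{-1}$.} The matrix $I-A$ is the tridiagonal circulant with $1-\beta$ on the diagonal and $-\alpha$ on the off-diagonals. I will verify \eqref{eq:IAinv} directly rather than via Fourier. Take the ansatz $v_d=\kappa^d+\kappa^{s_N-d}$, with indices taken modulo $s_N$. The equation $(I-A)v=c^{-1}e_0$ then reduces, away from $d=0$, to the recursion $(1-\beta)v_d-\alpha(v_{d-1}+v_{d+1})=0$. This recursion holds exactly when $\alpha\kappa^2-(1-\beta)\kappa+\alpha=0$. Its roots are $\kappa_1$ and $\kappa_1^{-1}$, and they are real because $(1-\beta)^2-4\alpha^2=(1-\beta-2\alpha)(1-\beta+2\alpha)>0$ for $\beta+2\alpha<1$. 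Both roots are positive with product $1$, so the smaller one satisfies $0<\kappa_1<1$. At $d=0$ we have $v_1=v_{-1}$, and the equation becomes $(1-\beta)(1+\kappa_1^{s_N})-2\alpha(\kappa_1+\kappa_1^{s_N-1})$. Using $\alpha\kappa_1^2-(1-\beta)\kappa_1+\alpha=0$, this simplifies to $(1-\kappa_1^{s_N})\sqrt{(1-\beta)^2-4\alpha^2}$. That value fixes the normalization in \eqref{eq:IAinv}, and uniqueness follows from invertibility of $I-A$.

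\emph{Step 2: the matrix $C=A^{1/2}(I-A)^{-1/2}$.} Here $g(\theta)=\sqrt{\lambda(\theta)/(1-\lambda(\theta))}$ with $\lambda(\theta)=\beta+2\alpha\cos\theta\in[\beta-2\alpha,\beta+2\alpha]\subset(0,1)$. Substituting $z=e^{i\theta}$ writes $g$ as a function on the unit circle. This function extends analytically to the annulus $\kappa<|z|<\kappa^{-1}$, where the radius is set by the nearest singularities. These are branch points at the zeros of $\lambda(z)=\beta+\alpha(z+z^{-1})$ and of $1-\lambda(z)$. The zeros of $1-\lambda$ are exactly $\kappa_1^{\pm1}$ from Step 1, while the zeros of $\lambda$ lie at $\kappa'$ and $1/\kappa'$ with $\kappa'=\big(\beta-\sqrt{\beta^2-4\alpha^2}\big)/(2\alpha)$, outside and inside the unit circle respectively. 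I will take $\kappa_2$ to be the larger of the two inner radii, $\kappa_2=\max\{\kappa_1,1/\kappa'\}<1$. The Fourier coefficient $\hat g(d)$ is then the contour integral $\frac{1}{2\pi i}\oint g(z)z^{-d-1}\,\d z$. I will deform the contour onto the branch cut running from $\kappa_2$ toward $0$ (for $d\ge0$; symmetrically for $d<0$). Pulling out the factor $\kappa_2^{|d|}$ from the endpoint produces $\hat g(d)=c_5\kappa_2^{|d|}\,(1+r_d)$ with a correction $r_d$. Periodizing as above then gives \eqref{eq:Cexp}, with the factor $(1-\kappa_2^{s_N})^{-1}$ absorbed into $c_5$; that factor is $1+o(1)$ uniformly.

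\emph{Main obstacle.} The delicate point is Step 2. Exact geometric behaviour $c\,\kappa^{|d|}$ of $\hat g$ occurs exactly when $g(\theta)$ is a constant times a Poisson kernel, i.e.\ when $1/g$ is affine in $\cos\theta$, as in Step 1. For the square root, the branch cut in general contributes the correction $r_d$ above, and $r_d$ need not vanish. I will first compute the branch-cut integral explicitly to check whether $r_d$ vanishes or combines into a single geometric term. If it does not, the best the deformation gives is the representation $C_{j,k}=c_5\big(\kappa_2^{|j-k|}+\kappa_2^{s_N-|j-k|}\big)(1+o(1))$ up to the endpoint correction. That form is in any case all that is used afterwards in Lemma~\ref{lem:G'=o}.
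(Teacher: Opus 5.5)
Your Step~1 is correct. It is also a tidier route than the paper's: the paper diagonalises $I-A$ by the DFT and sums the result via the Poisson-kernel identity \eqref{eq:DiscretePoissonId} and periodisation. Your ansatz $v_d=\kappa_1^d+\kappa_1^{s_N-d}$ needs only the characteristic equation and the check at $d=0$, which does give $(1-\kappa_1^{s_N})\sqrt{(1-\beta)^2-4\alpha^2}$.

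The gap is in Step~2, and it starts with the target. As you observe, exactly geometric Fourier coefficients force $1/g$ to be affine in $\cos\theta$, so \eqref{eq:Cexp} cannot hold as an identity. The paper's proof in fact only obtains the upper bound $\abs{C_{j,k}}\le c_5(\kappa_2^{|j-k|}+\kappa_2^{s_N-|j-k|})$, by citing Benzi--Razouk, Thm.~3.4. That bound is all that \eqref{eq:kclose_jfar} uses.

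Your fallback $C_{j,k}=c_5(\cdots)(1+o(1))$ is false as well:
\begin{itemize}
\item $1-\lambda(z)$ has a simple zero at $1/\kappa_1$, so $g$ has an inverse square-root singularity there. It contributes $c\,\kappa_1^{d}d^{-1/2}$ to $\hat g(d)$.
\item The zeros of $\lambda(z)=\beta+\alpha(z+z^{-1})$ lie at $-\kappa'$ and $-1/\kappa'$, with your $\kappa'<1$. They are not at $\kappa'$ and $1/\kappa'$ with $\kappa'$ outside the disc, so your choice $\max\{\kappa_1,1/\kappa'\}$ actually exceeds $1$. The zero at $-1/\kappa'$ contributes $c'(-\kappa')^{d}d^{-3/2}$.
\end{itemize}
Hence $r_d\not\to 0$ for every choice of $c_5$ and $\kappa_2$. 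For $\beta<1/2$, where $\kappa'>\kappa_1$, the far entries of $C$ even alternate in sign.

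The contour deformation also fails as described. The interior cut must join $-\kappa'$ to $\kappa_1$, and the pole of $z^{-d-1}$ at $0$ sits between them. So the circle cannot simply be collapsed onto a cut ``from $\kappa_2$ toward $0$''; for $d\ge 0$ you would have to deform outward.

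The missing idea is to aim for the bound, which needs no branch-cut computation. On $\abs{z}=1$ we have $\lambda\in(0,1)$, so $h=\lambda/(1-\lambda)>0$ there and its winding number about $0$ vanishes. The zeros and poles of $h$ are $-\kappa'$, $-1/\kappa'$, $\kappa_1$ and $1/\kappa_1$. Hence $g=\sqrt{h}$ is single-valued and analytic on the closed annulus $\rho\le\abs{z}\le\rho^{-1}$ for any $\rho\in(\max\{\kappa_1,\kappa'\},1)$. There $M_\rho=\max\abs{g}$ does not depend on $N$. Cauchy's estimate on $\abs{z}=\rho^{\mp1}$ gives $\abs{\hat g(d)}\le M_\rho\,\rho^{\abs{d}}$. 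Your periodisation, with $1-\rho^{s_N}\ge 1-\rho$, then yields
\[
\abs{C_{j,k}}\le \frac{M_\rho}{1-\rho}\Big(\rho^{\abs{j-k}}+\rho^{s_N-\abs{j-k}}\Big).
\]
This is exactly the estimate the paper needs, with $\kappa_2=\rho$ and $c_5=M_\rho/(1-\rho)$, now proved directly rather than cited.
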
 

\begin{proof}
    
The proof of this lemma is a straightforward computation, using the fact that all circulant matrices can be diagonalized by the discrete Fourier transform matrix. More precisely, the $s_N\times s_N$-matrix $I-A$ can be written as $I-A =F_{s_N}^{-1} \diag(F_{s_N}(I-A)e_1) F_{s_N} $, where
$$
(F_{s_N})_{j,k} = e^{-\frac{2\pi i}{s_N}jk}.
$$
This implies that for $j>k$, 
$$(I-A)^{-1} = F_{s_N}^{-1} \diag(F_{s_N}(I-A)e_1)^{-1} F_{s_N} = F_{s_N}^{-1} \diag((F_{s_N}(I-A)e_1)^{-1}) F_{s_N}$$
and again a straightforward computation gives that
$$
(I-A)^{-1}_{j,k}= \frac{1}{s_N} \sum_{l=0}^{s_N-1} \frac{e^{\frac{2\pi il }{s_N}(j-k)}}{1-\beta - 2\alpha \cos\left( \frac{2\pi l}{s_N}\right)} =  \frac{1}{s_N} \frac{\kappa_1}{\alpha}\sum_{l=0}^{s_N-1} \frac{e^{\frac{2\pi il }{s_N}(j-k)}}{1 - 2\kappa_1 \cos\left( \frac{2\pi l}{s_N}\right) + \kappa_1^2}
$$
where we used that $\frac{\alpha}{\kappa_1}(1+\kappa_1^2) = 1-\beta$. Note that the following identity holds (that we will prove below):
\begin{equation} \label{eq:DiscretePoissonId}
\frac{1}{s_N}\sum_{l=0}^{s_N-1} \frac{e^{\frac{2\pi il }{s_N}(j-k)}}{1 - 2\kappa_1 \cos\left( \frac{2\pi l}{s_N}\right) + \kappa_1^2} = \frac{1}{1-\kappa_1^2}\frac{\kappa_1^{j-k} + \kappa_1^{s_N -(j-k)}}{1-\kappa_1^{s_N}}
\end{equation}
By \eqref{eq:DiscretePoissonId}, we obtain that $(I-A)^{-1}_{jk}= \frac{\kappa_1}{\alpha}\frac{1}{1-\kappa_1^2}\frac{\kappa_1^{j-k} + \kappa_1^{s_N -(j-k)}}{1-\kappa_1^{s_N}}$ and since 
$$
\frac{\alpha}{\kappa_1}(1-\kappa_1^2) = \frac{\alpha}{\kappa_1}(1+\kappa_1^2) - 2\alpha \kappa_1 = 1-\beta -2\alpha \kappa_1 = \sqrt{(1-\beta)^2-4\alpha^2},
$$
where the last step follows by the definition of $\kappa_1$ in \eqref{eq:IAinv}, this proves the representation of $(I-A)^{-1}$ in \eqref{eq:IAinv}.

\medskip

Let us now prove the identity in \eqref{eq:DiscretePoissonId}: Using the formula for geometric series, it is not hard to see that for any $0<r<1$,
\begin{flalign*}
\sum_{k\in \Z}r^{|k|}e^{\frac{2\pi i k l}{s_N}} &=   -1 +  \sum_{k=0}^{\infty}r^{k}\left(e^{\frac{2\pi i k l}{s_N}} +e^{-\frac{2\pi i k l}{s_N}} \right) = -1 + \frac{1}{1-re^{\frac{2 \pi i l}{s_N}}} + \frac{1}{1-re^{-\frac{2 \pi i l}{s_N}}} \\
& = -1 + \frac{2 - 2r\cos\left( \frac{2\pi l }{s_N}\right)}{1-2r\cos \left(\frac{2 \pi l}{s_N} \right) + r^2} = \frac{1-r^2}{1-2r\cos \left(\frac{2 \pi l}{s_N} \right) + r^2},
\end{flalign*}
hence 
\begin{equation}\label{eq:GeometricSum}
    \frac{1}{1-r^2}\sum_{k\in \Z}r^{|k|}e^{\frac{2\pi i k l}{s_N}} = \frac{1}{1-2r\cos \left(\frac{2 \pi l}{s_N} \right) + r^2}
\end{equation}
This implies that for any $m \in \N$,
\begin{flalign*}
    \frac{1}{s_N}\sum_{l=0}^{s_N-1} \frac{e^{\frac{2\pi il }{s_N}m}}{1 - 2r \cos\left( \frac{2\pi l}{s_N}\right) + r^2} &= \frac{1}{s_N}\sum_{l=0}^{s_N-1} e^{\frac{2\pi il }{s_N}m} \frac{1}{1-r^2} \sum_{k\in \Z} r ^{|k|}e^{\frac{2\pi i l k}{s_N}} \\
    &= \frac{1}{1-r^2} \sum_{k \in \Z} r^{|k|}\frac{1}{s_N}\sum_{l=0}^{s_N-1}e^{\frac{2\pi i l}{s_N}(m+k)} \\
    &= \frac{1}{1-r^2} \sum_{q \in \Z} r^{|qs_N-m|},
\end{flalign*}
where the last step follows since $\frac{1}{s_N}\sum_{l=0}^{s_N-1}e^{\frac{2\pi i l}{s_N}(m+k)}$ is equal to one if $m+k = 0(\mod s_N)$ and otherwise if it is equal to zero. Also note that now without loss of generality, we can assume that $m < s_N$ since otherwise we can simply go over to $m (\mod s_N)$. It remains to show that $\sum_{q \in \Z} r^{|qs_N-m|} = \frac{r^{m} + r^{s_N -m}}{1-r^{s_N}}$. Partitioning the sum into the cases $q=0, q\ge 1$ and $q\le -1$,  we obtain the first claim
\begin{align*}
    \sum_{q \in \Z} r^{|qs_N-m|} = r^m + \sum_{q \ge 1}r^{qs_N - m } + \sum_{p\ge1}r^{ps_N + m} = r^m + \frac{r^{s_N -m}}{1-r^{s_N}} + \frac{r^{s_N + m}}{1-r^{s_N}}  = \frac{r^{m} + r^{s_N-m}}{1-r^{s_N}}.
\end{align*} 

For $C=(A^{-1} -I )^{-\frac{1}{2}}$ the same computation is more involved since the eigenvalues of $A^{-1} -I$ are more complicated. For this reason we will not compute the exact constants and use the result from Theorem 3.4 in \cite{BeRa07} proving that there exists positive constants $K$ and $0 < \kappa_2 <1$ such that 
$$
(A^{-1} -I )^{-\frac{1}{2}}_{j,k} < cond \cdot K \cdot (\kappa_2^{|j-k|} + \kappa_2 ^{s_N-|j-k|}),
$$
where $cond$ denotes the spectral condition number of the matrix of eigenvectors of $A^{-1} - I$ and we define $c_5 \coloneqq cond \cdot  K$.
\end{proof}

\begin{proof}[Proof of Lemma \ref{lem:G'=o}]
 First let us assume that $s_N \gg \log N$, the other case $s_N = \mathcal{O}(\log N)$ is easier and we discuss it in the end. 
Let $l$ be the largest non-zero component of $t$ and define $\widetilde{d}_N \coloneqq \frac{5}{-\log \kappa_1}\log N$ such that $\kappa_1^{\widetilde{d}_N}=N^{-5}$.
Let $\mathcal{I}_{\widetilde{d}_N} \coloneqq \{1, \ldots, \widetilde{d}_N\}\cup\{ s_N-\widetilde{d}_N+1, \dots, s_N \}$ be the indices near the endpoints $1,s_N$ and let us consider $k \notin \mathcal{I}_{\widetilde{d}_N}$ in order to show that their contribution is negligible: for $k=\widetilde{d}_N+1, \ldots, s_N-\widetilde{d}_N$, we have by \eqref{eq:IAinv}
\begin{flalign}
    \Big\lvert \Big( (I-A)^{-1} t\Big)_k \Big\rvert=&\sum_{j=1}^{l} \abs{(I-A)^{-1}_{j,k}t_j} \\ 
    =& \frac{1}{(1-\kappa_1^{s_N})\sqrt{(1-\beta)^2 -4\alpha^2}} \sum_{j=1}^l (\kappa_1^{|j-k|}+\kappa_1^{s_N-|j-k|})\abs{t_j} \nonumber \\
 \le&    \frac{2\kappa_1^{\widetilde{d}_N}}{(1-\kappa_1^{s_N})\sqrt{(1-\beta)^2 -4\alpha^2}}   \sum_{j=1}^l\abs{t_j}=\mathcal O (N^{-5})\label{eq:jFar}
\end{flalign}
uniformly in $k\notin \mathcal{I}_{\widetilde{d}_N}$, where the last step follows from the definition of $\widetilde{d}_N$.

Next, we need to find a suitable bound for $G'(\xi_k)$. As $\xi_k=\sqrt{\tfrac{s_N}{N}}(C x)_k + \lambda_k\sqrt{\tfrac{s_N}{N}}\big((I-A)^{-1}t\big)_k $  for some $0\le\lambda_k\le 1$ are the intermediate values from Taylor's theorem, we know that 
\begin{equation}\label{def:Xi}
  \xi_k = \sqrt{\frac{s_N}{N}}(Cx)_k + \mathcal O (N^{-5})
\end{equation}
Recall that we are restricted to the ball $B_{4\sqrt{s_N}}$, hence
$$|\xi_k|\le 4\|C\|  \frac{s_N}{\sqrt{N}}+\mathcal O (N^{-5}).$$ Note that $|G'(x)|\le |x|$ by \eqref{eq:logcosh}
and therefore 
\begin{equation}\label{eq:Gprime}
    |G'(\xi_k)| \le 4 \|C\|  \frac{s_N}{\sqrt{N}}+\mathcal O (N^{-5}).
\end{equation}
Putting together the estimates (\ref{eq:jFar}) and (\ref{eq:Gprime}), we arrive at some rough bound
\begin{flalign}
\Big\lvert\frac{N}{s_N}\sum_{k=\widetilde{d}_N}^{s_N-\widetilde{d}_N}G'(\xi_k)\sqrt{\frac{s_N}{N}} \Big((I-A)^{-1}t \Big)_k\Big\rvert 
&=\mathcal O(N s_N N^{-5})\label{eq:IrN}.
\end{flalign}
It remains to consider
$$
\frac{N}{s_N}\sum_{k\in \mathcal{I}_{\widetilde{d}_N}}G'(\xi_k)\sqrt{\frac{s_N}{N}} \Big((I-A)^{-1}t \Big)_k,
$$ 
where now the contribution of $((I-A)^{(-1)}t )_k$ is not negligible.
We will again make use of the exponential decay away from the diagonal, this time for the matrix $C$. Hence, we will just use the uniform upper bound
\begin{equation}\label{eq:boundCt}
\abs{((I-A)^{-1}t)_k}\le \|(I-A)^{-1}\|\|t\|\le (1-\beta -2\alpha)^{-1} \|t\|\eqqcolon C_{\beta,\alpha,t}
\end{equation}
and show that 
$$
\frac{N}{s_N}\sum_{k\in \mathcal{I}_{\widetilde{d}_N}}G'(\xi_k)\sqrt{\frac{s_N}{N}} C_{\beta,\alpha, t}
$$
is small by bounding $G'(\xi_k)$ for $\xi_k=\sqrt{\tfrac{s_N}{N}}(C x)_k + \lambda_k\sqrt{\tfrac{s_N}{N}}\big((I-A)^{-1}t\big)_k $. To that end, split $(Cx)_k$ into two sums. Recall the definition $\mathcal{I}_{d_N} \coloneqq \{ 1, \cdots, d_N, s_N-d_N+1, \cdots, s_N\}$ for 
$$
d_N=\frac{5}{- \log \kappa_2 }\log N + \widetilde{d}_N = \left(\frac{5}{- \log \kappa_2 }+\frac{5}{- \log \kappa_1 } \right)\log N =: c_6 \log N
$$
and write
$$
(C x)_k=\sum_{j\notin \mathcal{I}_{d_N}}C_{kj}x_j+\sum_{j\in \mathcal{I}_{ d_N}}C_{kj}x_j
$$
then using the exponential decay \eqref{eq:Cexp} we obtain
\begin{align}\label{eq:kclose_jfar}
\sum_{j\notin \mathcal{I}_{d_N}}\abs{C_{k,j}x_j} 
&\le c_{5} \sum_{j\notin \mathcal{I}_{d_N}}( \kappa_2^{|j-k|} +  \kappa_2^{(s_N-|j-k|)})\abs{x_j}\nonumber\\
&\le c_5   \kappa_2^{d_N-\widetilde{d}_N}\sum_{j\notin \mathcal{I}_{d_N}}\abs{x_j} 
=\mathcal O(\kappa_2^{ d_N-\widetilde{d}_N}\, s_N ^{3/2}) =\mathcal O (N^{-5}s_N^{3/2}).
\end{align}
For the near part $j\in \mathcal{I}_{d_N}$ we need to use $x\in B^{(2d_N)}_{4\sqrt {2d_N}}$. That means now we are integrating over a set where the first and last $d_N$ coordinates lie in the $2d_N$-dimensional ball of radius $4\sqrt{2d_N}$, i.e.~$\|P_{\mathcal{I}_{d_N}}x\|\le4\sqrt{2d_N}$, hence
$$
|\sum_{j\in \mathcal{I}_{d_N}} C_{k,j}x_j|= |(CP_{\mathcal{I}_{d_N}}x)_k|\le \|CP_{\mathcal{I}_{d_N}}x\|\le \|C\| \cdot\|P_{\mathcal{I}_{d_N}}x\|\le  4\|C\|\cdot \sqrt {2d_N},
$$
where the spectral norm $\|C\|$ is independent of $N$.
By \eqref{eq:kclose_jfar},
\begin{flalign*}
|\xi_k |&= \sqrt{\frac{s_N}{N}}\abs{(C x)_k} + \mathcal{O}\Big( \sqrt{\frac{s_N}{N}}\Big) \\
&\le \sqrt{\frac{s_N}{N}}\Big(4\|C\| \sqrt{2d_N} + \mathcal{O}(N^{-5}s_N^{3/2})\Big) + \mathcal{O}\Big( \sqrt{\frac{s_N}{N}}\Big)  = \mathcal O \Big( \sqrt{\frac{s_N\log N} {N}}\Big)  \to 0.
\end{flalign*} 
Recall the definition $G(x) = \log \cosh (x) -\frac{x^2}{2}$ and therefore $G'(x) = \tanh(x) - x$. A Taylor expansion around zero yields that $G'(x) = -\frac{x^3}{3} + \mathcal{O}(x^5)$, hence for $N$ large enough
$$
G'\Big(\xi_k\Big)
= \mathcal O\Big(\frac{s_N\log N}{N} \Big)^{3/2} 
$$
uniformly in $k\in\mathcal I_{\widetilde{d}_N}$.
It follows 
\begin{flalign*}\frac{N}{s_N}\sum_{k\in \mathcal{I}_{\widetilde{d}_N}}G'(\xi_k)\sqrt{\frac{s_N}{N}} C_{\beta,\alpha, t} =  \mathcal O\Big(\frac{N}{s_N} \widetilde{d}_N \sqrt{\frac{s_N}{N}}\big(\frac{s_N\log N}{N} \big)^{3/2}\Big) =\mathcal{O}\Big( \frac{s_N}{N}(\log N)^{5/2} \Big) ,
\end{flalign*}
converging to zero for $s_N = o\left(\frac{N}{(\log N)^{5/2}} \right)$.
The claim follows from combining this with \eqref{eq:IrN}.

In the case where $s_N =\mathcal O(\log N)$, the statement of Lemma \ref{lem:G'=o} is easily proved, since essentially $\mathcal{I}_{\widetilde{d}_N}^c=\emptyset$. Indeed, for $x \in B_{4\sqrt{2d_N}}$, for every $k=1,\ldots, s_N$ we have that 
\begin{flalign}\label{s_Nsmall1}
   \left\vert\sqrt{\frac{s_N}{N}}(Cx)_k\right\vert \le \sqrt{\frac{s_N}{N}}\|C\|4\sqrt{2d_N} \le 8c_6\|C\|\frac{\log N}{\sqrt{N}} 
\end{flalign}
and by \eqref{eq:boundCt},
\begin{flalign}\label{s_Nsmall2}
    \left|\lambda_k \sqrt{\frac{s_N}{N}}\left( (I-A)^{-1} t\right)_k \right| \le C_{\beta,\alpha,t} \sqrt{\frac{s_N}{N}} \le C_{\beta,\alpha,t} \sqrt{2}c_6\sqrt{\frac{\log N}{N}}.
\end{flalign}
In particular, \eqref{s_Nsmall1} and \eqref{s_Nsmall2} imply that $|\xi_k| = \mathcal{O}\left( \frac{\log N}{\sqrt{N}}\right) \longrightarrow 0$ as $N\to \infty$. Using again Taylor expansion of $G'$ around zero and plugging in all the $\xi_k$ yields
\begin{flalign*}
   \left\vert \frac N {s_N}\sum_{k=1}^{s_N}G'\Big( \xi_k \Big)\sqrt{\frac{s_N}{N}}\Big( (I-A)^{-1} t\Big)_k \right\vert 
   \le& \sqrt{\frac{N}{s_N}} \|(I-A)^{-1}\|\|t\| \sum_{k=1}^{s_N} G'(\xi_k) \\
   =& \mathcal{O}\left( \sqrt{\frac{N}{s_N}} \left(\frac{\sqrt{s_N d_N}}{\sqrt{N}} \right)^3\right) \\
   =&\mathcal{O}\left( \frac{(\log N)^2}{N}\right) = o(1).
\end{flalign*}
This finishes the proof of Lemma \ref{lem:G'=o}.
\end{proof}

\subsection{Removing the Gaussian addition}\label{subsec:remove}
Finally, we are in the position to combine our ingredients for the
\begin{proof}[Proof of Theorem \ref{CLT}\,(i)]
Recall, we had shown in \eqref{eq:zwitscherndeszwischenergebnis} that
\begin{flalign*}
&\E\left[\exp\left\{ t^T (Z + \widetilde{m}) \right\}\right]\exp\left\{\frac{1}{2}t^T(A-A^2)^{-1}t \right\} \\
=&\frac{ \int_{\tilde D_N}
\exp\Big\{-\frac{1}{2} \|x\|^2 + \frac{N}{s_N}\sum_{k=1}^{s_N}
G\Big(\sqrt{\frac{s_N}{N}}(C x)_k + \sqrt{\frac{s_N}{N}}\Big((I-A)^{-1}t \Big)_k\Big)
\Big\}\d^{s_N}x }{ \int_{\tilde D_N}
\exp\Big\{
-\frac{1}{2} \|x\|^2
+ \frac{N}{s_N}\sum_{k=1}^{s_N}
G\Big(\sqrt{\frac{s_N}{N}}(C x)_k\Big)
\Big\}\d^{s_N}x }\big(1+o(1)\big).\nonumber
\end{flalign*}
which we claim to converge to $1$ as $N\to\infty$. By the Taylor approximation \eqref{eq:Taylorrr} and Lemma \ref{lem:G'=o}, we obtain
\begin{align*}
     &\int_{\tilde D_N}
\exp\Big\{-\frac{1}{2} \|x\|^2 + \frac{N}{s_N}\sum_{k=1}^{s_N}
G\Big(\sqrt{\frac{s_N}{N}}(C x)_k + \sqrt{\frac{s_N}{N}}\Big((I-A)^{-1}t \Big)_k\Big)
\Big\}\d^{s_N}x \\
=& \int_{\tilde D_N}
\exp\Big\{-\frac{1}{2} \|x\|^2 + \frac{N}{s_N}
\sum_{k=1}^{s_N}G\Big(\sqrt{\tfrac{s_N}{N}}(C x)_k\Big) + G'\Big( \xi_k \Big)\sqrt{\frac{s_N}{N}}\Big( (I-A)^{-1} t\Big)_k\Big\}\d^{s_N}x \\
=& \int_{\tilde D_N}
\exp\Big\{-\frac{1}{2} \|x\|^2 + \frac{N}{s_N}
\sum_{k=1}^{s_N}G\Big(\sqrt{\tfrac{s_N}{N}}(C x)_k\Big) + o(1)\Big\}\d^{s_N}x 
\end{align*}
Therefore,
$$
 \E\left[\exp\left\{ t^T (Z + \widetilde{m}) \right\}\right]\longrightarrow \exp \left\{ \frac{1}{2}t^T(A-A^2)^{-1}t \right\},
$$
as claimed.
where $A$ and $A^2$ are infinite matrices, if we view $t\in\R^\N$.
That means we have shown that all finite-dimensional marginals of $\mu_{N,\alpha, \beta}\circ \widetilde{m}^{-1} \star \mathcal{N}\left(0, A^{-1}\right)$ converge weakly to $\mathcal{N}(0, (A-A^2)^{-1})$ (with the respective dimension). It remains to reverse the contribution of the independent Gaussian variable $Z$. To that end, note that if we multiply the above by the inverse of the moment-generating function of the $\mathcal{N}(0, A^{-1})$-distribution, we obtain
\begin{align*}
\exp \left\{ \frac{1}{2}t^T(A-A^2)^{-1}t \right\}\exp \left\{  -\frac{1}{2}t^TA^{-1}t\right\} &= \exp \left\{\frac{1}{2}t^T( (A-A^2)^{-1}-A^{-1})t \right\} \\
&= \exp \left\{ \frac{1}{2}t^T(I-A)^{-1}t \right\},
\end{align*}
since 
\begin{align*}
(A-A^2)^{-1}-A^{-1} &= (A-A^2)^{-1}(I - (A-A^2)A^{-1}) \\
&= (A-A^2)^{-1}A=  (I-A)^{-1}.
\end{align*}
\end{proof}

\section{Proof of Theorem \ref{WLLN}\,(ii)}
The strategy for the law of large numbers in the low temperature regime is similar to the high temperature case. We will show that, for any fixed $\delta>0$, with high probability not all magnetizations are more than $\delta$ away from $m^*$, or from $-m^*$ respectively. Afterwards we will use this to show that the magnetization of the first block converges to $m^*$ or $-m^*$ in probability and deduce that this is true for all block magnetizations uniformly. The last step is to show that typically all block magnetizations have the same sign. The proofs are all very similar in spirit (to the high temperature case). The partition function will be bounded from below by choosing a suitable sub-collection of its summands (depending on the respective objective). In the numerator of the Gibbs measure we will use the Cauchy-Schwarz bound (\ref{CauchySchwarz}) on the relevant blocks and see that the ones that are far away cancel with the partition function. Throughout the whole chapter assume that $\beta + 2\alpha >1$ and denote by $m^* = m^*(\beta + 2 \alpha)$ the largest solution to the Curie-Weiss equation 
    $$
    \tanh((\beta + 2\alpha)m) =m.
    $$
\begin{lemma}\label{LT1}

Assume that $s_N = o(N)$, then for any $\eps>0$, there exists a positive constant $c_{\beta, \alpha}(\eps)$ such that
$$
\mu_{N, \beta, \alpha}\left( \inf_{k=1}^{s_N} \min \left\{|m_k -m^*|, |m_k + m^*| \right\} > \eps \right) \le \exp\left\{- N  c_{\beta,\alpha}(\eps)\right\} \left(\frac{N}{s_N} \right)^{s_N}
$$
and the right hand side converges to 0.
In other words, with high probability, not all blocks have a magnetization bounded away from $+m^*$ and from $-m^*$.  
\end{lemma}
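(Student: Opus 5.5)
We mimic the proof of \eqref{boundHigh}, but now the lower bound $Z_{N,\beta,\alpha}\ge 1$ is too weak, because in the low temperature phase the free energy is positive. We therefore bound the partition function from below by the single-block optimum. The plan has four steps.

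\emph{Step 1 (numerator).} Write $\mu_{N,\beta,\alpha}(\cdot)$ as a ratio of sums over $m\in\mathcal A_N^{s_N}$, exactly as in the proof of \eqref{boundHigh}. Apply the Cauchy--Schwarz bound \eqref{CauchySchwarz} to every interaction term $m_km_{k+1}$. The numerator then factorizes into an $s_N$-fold product of Curie--Weiss sums at inverse temperature $\beta+2\alpha$:
$$\Big(\sum_{m\in\mathcal A_N}\1_{\{\min(|m-m^*|,|m+m^*|)>\eps\}}\exp\Big\{\tfrac{N}{s_N}F_{\beta+2\alpha}(m)\Big\}\Big)^{s_N},$$
with $F_{\beta+2\alpha}$ as in \eqref{F}, up to the polynomial Stirling factors in the binomial coefficients (each at most $1$ after using $\binom{n}{k}2^{-n}\le e^{n(s(m)-\log 2)}$).

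\emph{Step 2 (partition function).} Keep a single summand of $Z_{N,\beta,\alpha}$, namely the one with all $m_k$ equal to a grid point $\bar m\in\mathcal A_N$ closest to $m^*$, so $|\bar m-m^*|\le 2s_N/N$. For this configuration the Cauchy--Schwarz bound is an equality, because $m_km_{k+1}=\bar m^2$. Using $\binom{n}{n(1+m)/2}2^{-n}\ge (n+1)^{-1}e^{n(s(m)-\log 2)}$, we get
$$Z_{N,\beta,\alpha}\ge \Big(\tfrac{N}{s_N}+1\Big)^{-s_N}\exp\Big\{N F_{\beta+2\alpha}(\bar m)\Big\}.$$
The function $F_{\beta+2\alpha}$ is smooth near $m^*$ with $F'(m^*)=0$, so $F(\bar m)\ge F(m^*)-O((s_N/N)^2)$. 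Multiplying by $N$ gives a loss of only $O(s_N^2/N)$, which is harmless; a cruder Lipschitz bound would give $O(s_N)$ and would also suffice.

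\emph{Step 3 (energy gap).} The main point is the standard Curie--Weiss fact that in the low temperature phase $F_{\beta+2\alpha}$ attains its maximum exactly at $\pm m^*$ and is continuous on $[-1,1]$. Compactness then gives
$$c(\eps):=F(m^*)-\sup\{F(m):\min(|m-m^*|,|m+m^*|)>\eps\}>0.$$
Each factor in Step 1 is therefore at most $(N/s_N+1)\exp\{\frac{N}{s_N}(F(m^*)-c(\eps))\}$.

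\emph{Step 4 (combine).} Taking the ratio of Steps 1 and 3 with Step 2, the terms $e^{NF(m^*)}$ cancel, and we obtain
$$\mu_{N,\beta,\alpha}(\cdots)\le \exp\{-N c(\eps)+O(s_N^2/N)\}\,(N/s_N+1)^{2s_N}.$$
Absorbing constants gives the claimed form with $c_{\beta,\alpha}(\eps)=c(\eps)/2$, say. The error $O(s_N^2/N)=o(N)$ is fine since $s_N=o(N)$, and for the same reason $(N/s_N)^{s_N}=\exp\{s_N\log(N/s_N)\}=e^{o(N)}$, so the right-hand side tends to $0$. The only delicate point is making sure the polynomial prefactors from the lattice grid and from Stirling are at most $(N/s_N)^{O(s_N)}$ and never lose a factor exponential in $N$; this is why we use the matched all-equal configuration in Step 2 rather than Jensen's inequality.
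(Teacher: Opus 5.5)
Your proposal is correct and follows the paper's proof step by step. Both arguments bound $Z_{N,\beta,\alpha}$ from below by the single summand in which all block magnetizations equal a grid point near $m^*$, decouple the numerator by Cauchy--Schwarz into an $s_N$-fold product of Curie--Weiss sums at inverse temperature $\beta+2\alpha$, and then use the uniform gap of $F_{\beta+2\alpha}$ away from $\pm m^*$. The only differences are in bookkeeping: you use non-asymptotic entropy bounds for the binomial coefficients and $F_{\beta+2\alpha}'(m^*)=0$, where the paper uses Stirling asymptotics and a $\delta$-approximation of $F_{\beta+2\alpha}(m^*)$, so your error terms are slightly more explicit.
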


\begin{proof}
As in the high temperature case, let us first find a lower bound for the partition function. To that end, recall that the magnetization of each block takes values in the set
$$
\mathcal{A}_N = \left\{ -1 + 2k\frac{s_N}{N} \, : \, k = 0, \ldots, \frac{N}{s_N} \right\}.
$$
Then
$$
Z_{N,\beta,\alpha} \ge \max_{\widetilde{m} \in \mathcal{A}_N} \exp\left\{ N \left(\frac{\beta + 2\alpha}{2}\widetilde{m}^2 - \log2 + s(\widetilde{m})\right) - V_N \right\},
$$
where we simply bound the sum in $Z_{N,\beta,\alpha}$ by the largest summand that satisfies that all entries in the magnetization vector coincide, and $V_N= s_N \left(\log \left( \sqrt{\frac{N(1-\widetilde{m}^2) \pi}{s_N }}\right)+ o(1) \right) $ denotes the lower order terms from Stirlings' formula. 

Recall the function $F_{\beta+2 \alpha}(m)$ from (\ref{F}) and further recall that from the usual Curie-Weiss model we know that in the low temperature regime, the maximizers of $F_{\beta+2 \alpha}$ on the interval $[-1,1]$ are given by $\pm m^*$
(see \cite{EllisEntropyLargeDeviationsAndStatisticalMechanics}). Since $F_{\beta+2 \alpha}$ is continuous and $\mathcal{A}_N$ is getting finer as $N$ becomes large, we obtain that for every $\delta>0$ there exists $\widetilde{N}\in \N$ such that for all $N \ge \widetilde{N}$, there exists $\widetilde{m} \in \mathcal{A}_N$ with
\begin{equation}\label{maximizer}
     F_{\beta+2 \alpha}(\widetilde{m}) \ge F_{\beta+2 \alpha}(m^*) - \delta.
\end{equation}
This implies that for all such $N$,
$$
Z_{N,\beta,\alpha} \ge \exp \left\{ N ( F_{\beta+2 \alpha}(m^*) - \delta) - V_N  \right\}.
$$
For fixed $\eps>0$, denote by $A_\eps$ the union of the balls  $B_\eps(m^*)$ and $ B_\eps(-m^*)$ of radius $\eps$ centered in $m^*$ and $-m^*$, resp., i.e.\ $A_\eps = B_\eps(m^*) \cap B_\eps (-m^*)$. By the above observation and  by using the Cauchy-Schwarz bound on all $k$ we obtain that
\begin{flalign*}
    &\mu_{N,\beta,\alpha}\left( \inf_{k=1}^{s_N} \min \left\{|m_k -m^*|, |m_k + m^*| \right\} > \eps \right) \\
    =& \mu_{N,\beta, \alpha}\left( m_1, \ldots, m_{s_N}\in A_\eps^c \right)\\
    \le & \exp \left\{ -N F_{\beta+2 \alpha}(m^*) + N \delta + V_N\right\} \sum_{m_1, \ldots, m_{s_N}} \1_{\left\{ m_1, \ldots, m_{s_N} \in A_\eps ^c \right\}} \exp\left\{ \frac{1}{2} \frac{N}{s_N} m^TAm \right\} \frac{1}{2^N} \prod_{k=1}^{s_N} { \frac{N}{s_N} \choose \frac{N}{s_N} \frac{1+m_k}{2}} \\
    \le & \exp \left\{ -N F_{\beta+2 \alpha}(m^*) + N \delta +V_N\right\} \sum_{m_1, \ldots, m_{s_N}} \exp\left\{ \frac{N}{s_N} \sum_{k=1}^{s_N}\left(\frac{\beta + 2\alpha}{2}m_k^2 -\log2 +s(m_k)\right) \right\}\1_{\left\{m_1, \ldots, m_{s_N} \in A_\eps^c  \right\}}  \\
    =& \exp \left\{ -N F_{\beta+2\alpha}(m^*) + N \delta +V_N \right\} \sum_{m_1, \ldots, m_{s_N}} \exp\left\{ \frac{N}{s_N} \sum_{k=1}^{s_N} F_{\beta+2\alpha}(m_k)\right\}\1_{\left\{ m_1, \ldots, m_{s_N} \in A_\eps^c \right\}} \\
    =& \exp\left\{ N\left(\delta+ \frac{V_N}{N} \right)\right\} \prod_{k=1}^{s_N}\left( \sum_{m_k} \exp\left\{ \frac{N}{s_N} (F_{\beta+2\alpha}(m_k)-F_{\beta+2\alpha}(m^*))\right\} \1_{\{m_k \in A_\eps^c\}}\right)
\end{flalign*}
Note that again this is the $s_N$-fold product of independent Curie-Weiss models, this time in the low temperature regime. It is well-known that $F_{\beta+2\alpha}$ is maximal at $\pm m^*$ and $F_{\beta+2\alpha}(m)-F_{\beta+2\alpha}(m^*)$ is negative everywhere else and decreasing in the distance of $m$ and $m^*$ if $m$ is non-negative and in the distance of $m$ and $-m^*$ if $m$ is non-positive. This means, that on the event where $m$ is bounded away from both $m^*$ and $-m^*$ there exists a positive constant $c_{\beta, \alpha}(\eps)$ (depending on $\alpha, \beta$ and $\eps$) such that if we choose $\delta$ small enough,
$$
F_{\beta+2\alpha}(m)-F_{\beta+2\alpha}(m^*) \le - \left(c_{\beta, \alpha}(\eps) + \delta + \frac{V_N}{N} \right).
$$
It follows that 
$$
\mu_{N,\beta,\alpha}\left( \inf_{k=1}^{s_N} \min \left\{|m_k -m^*|, |m_k + m^*| \right\} > \eps \right) \le \exp\left\{ -N  c_{\beta,\alpha}(\eps)\right\} \left(\frac{N}{s_N} \right)^{s_N} 
$$
implying the statement.
\end{proof}

\begin{lemma} Assume that $s_N = o\left( \frac{N}{\log N}\right)$. Let $\eps >0$, then for $c_{\beta, \alpha}(\eps)$ from Lemma \ref{LT1}, it is true that
    $$
    \mu_{N,\beta,\alpha}( \min \left\{|m_1 - m^*|, |m_1 + m^*|\right\}> \eps) \le  2\exp\left\{ -\frac{N}{s_N}\frac{c_{\beta, \alpha}(\eps)}{4}\right\}
    $$
\end{lemma}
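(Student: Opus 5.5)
We mimic the proof of Proposition \ref{m1}, with the role of "small magnetization" now played by "magnetization close to $\pm m^*$". Fix a small $\delta>0$, to be chosen depending on $\eps$, and define stopping indices
\[
R_N=\inf\{i\ge 2: \min\{|m_i-m^*|,|m_i+m^*|\}\le\delta\},\qquad L_N=\inf\{i\ge1: \min\{|m_{s_N-i}-m^*|,|m_{s_N-i}+m^*|\}\le\delta\}.
\]
By Lemma \ref{LT1} (applied with $\delta$ in place of $\eps$), the event that no such indices exist, or that they cross, has probability at most $e^{-Nc_{\beta,\alpha}(\delta)}(N/s_N)^{s_N}$. This bound is negligible compared to $\exp\{-\frac N{s_N}c_{\beta,\alpha}(\eps)/4\}$ because $s_N=o(N/\log N)$. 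It therefore suffices to bound $\mu_{N,\beta,\alpha}(\min\{|m_1\mp m^*|\}>\eps,\,R_N=r,\,L_N=l)$ and then sum over the at most $s_N^2$ pairs $(r,l)$.

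For the numerator, apply the Cauchy--Schwarz bound \eqref{CauchySchwarz} to all interactions touching the blocks in $\mathcal I_{l-1,r-1}$, including the two bonds into $m_r$ and $m_{s_N-l}$. This gives the single-block factors $\exp\{\frac N{s_N}F_{\beta+2\alpha}(m_k)\}$ for $k\in\mathcal I_{l-1,r-1}$. It leaves the chain $r,\dots,s_N-l$ with Hamiltonian $\frac12\frac N{s_N}(\sum\beta m_k^2+\sum 2\alpha m_km_{k+1})$ plus the extra boundary terms $\frac{\alpha}{2}\frac N{s_N}(m_r^2+m_{s_N-l}^2)$.

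This boundary term is the main difference from the high temperature case. There $m_r^2\le\delta^2$ was small. Here $m_r^2\approx (m^*)^2$, so the free-boundary chain is not directly comparable to the full-chain partition function. For the denominator we therefore keep, for $k\in\mathcal I_{l-1,r-1}$, only the summands with $m_k=\widetilde m$ for a grid point $\widetilde m\in\mathcal A_N$ near $m^*$ (or near $-m^*$, matching the signs of $m_r$ and $m_{s_N-l}$, which we split into cases). This keeps the genuine coupling $\alpha\frac N{s_N}\widetilde m(m_r+m_{s_N-l})$ to the remaining chain. Comparing with the numerator's $\frac{\alpha}{2}\frac N{s_N}m_r^2$, and using $|m_r\mp m^*|\le\delta$, the mismatch is $\frac N{s_N}\mathcal O(\alpha\delta)$. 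Here the interior Hamiltonian uses $2\alpha m_km_{k+1}\approx \alpha(m_k^2+m_{k+1}^2)$ up to $\mathcal O(\delta)$ errors on the frozen blocks. The chain on $r,\dots,s_N-l$, restricted suitably, then cancels as $\widetilde Z^{r,l}_N$ did before.

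Each frozen block contributes $\exp\{\frac N{s_N}F_{\beta+2\alpha}(m^*)\}$ up to the $\mathcal O(\sqrt{N/s_N})$ Stirling factor and an $o(1)$ grid error. Normalising by this factor, the ratio for block $1$ is at most $\exp\{-\frac N{s_N}c_{\beta,\alpha}(\eps)\}$ times a polynomial factor. Each of the other $r+l-2$ blocks outside $\delta$-neighbourhoods contributes at most $\exp\{-\frac N{s_N}c_{\beta,\alpha}(\delta)\}\cdot\mathrm{poly}(N/s_N)$. Choosing $\delta$ so small that the $\mathcal O(\alpha\delta)$ boundary error is below $c_{\beta,\alpha}(\eps)/2$, and summing the geometric series in $r+l$, yields a bound of order $s_N^2\exp\{-\frac N{s_N}(c_{\beta,\alpha}(\eps)/2-\mathcal O(\frac{s_N}N\log N))\}$. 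Since $s_N\log s_N=o(N)$, this is at most $\exp\{-\frac N{s_N}c_{\beta,\alpha}(\eps)/4\}$ for large $N$. Adding the Lemma \ref{LT1} term gives the factor $2$.

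The main obstacle is this boundary bookkeeping. One must make sure that freezing the discarded blocks at $\pm\widetilde m$ in the lower bound for $Z_{N,\beta,\alpha}$ reproduces the numerator's boundary contribution up to $\mathcal O(\delta)\frac N{s_N}$. This requires matching the signs of $m_r$ and $m_{s_N-l}$. If the signs differ, one freezes the left part at $+\widetilde m$ and the right part at $-\widetilde m$, which is allowed because these frozen segments are disjoint.
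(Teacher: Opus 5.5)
Your equal-sign cases $\mathcal{A}^{+,+}_{r,l}$ and $\mathcal{A}^{-,-}_{r,l}$ are sound and essentially follow the paper's argument. Keeping $m_r$ and $m_{s_N-l}$ inside the chain even makes the boundary mismatch a clean $\frac{\alpha}{2}\frac{N}{s_N}(m_r-\widetilde m)^2$. The gap is the mixed-sign case.

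\textbf{Why freezing with opposite signs fails.} The frozen set $\mathcal I_{l-1,r-1}=\{s_N-l+1,\dots,s_N,1,\dots,r-1\}$ is a single cyclic arc, because blocks $s_N$ and $1$ are nearest neighbours under the periodic boundary. Freezing $\{1,\dots,r-1\}$ at $+\widetilde m$ and $\{s_N-l+1,\dots,s_N\}$ at $-\widetilde m$ does give a valid lower bound for $Z_{N,\beta,\alpha}$, so ``disjoint'' is enough for validity. But that bound contains the anti-aligned bond $m_{s_N}m_1=-\widetilde m^2$, worth $e^{-\frac{N}{s_N}\alpha\widetilde m^2}$.

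Your Cauchy--Schwarz numerator credits the same bond with $e^{\frac{N}{s_N}\frac{\alpha}{2}(m_{s_N}^2+m_1^2)}$ and gives blocks $s_N,1$ their full $F_{\beta+2\alpha}$ weights. In effect it treats the bond as aligned. The ratio therefore acquires a factor $\exp\{\frac{N}{s_N}2\alpha\widetilde m^2\}$. This is an order-one constant times $N/s_N$, not $\mathcal O(\delta)\frac{N}{s_N}$.

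That factor is compensated by $c_{\beta,\alpha}(\eps)+(r+l-2)c_{\beta,\alpha}(\delta)$ only when $r+l$ is large. For short arcs it is not, since $c_{\beta,\alpha}(\eps)\to0$ as $\eps\to0$ and $\delta$ must be chosen even smaller. Moving the sign change elsewhere does not help. If the chain endpoints $r$ and $s_N-l$ carry opposite signs, every frozen configuration on the complementary arc contains a domain wall, while the sign-blind Cauchy--Schwarz bound in the numerator never pays for one.

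\textbf{The missing idea: the paper's flip comparison.} This avoids any denominator estimate in the mixed case.
\begin{enumerate}
\item On $\mathcal{A}^{+,-}_{r,l}$, reverse the sign of every block in the maximal run of non-positive magnetizations containing $s_N-l$. This run cannot contain $r$, since $m_r>0$.
\item The binomial weights and the terms $\beta m_k^2$ are even, and the bonds inside the run are unchanged. The two bonds at the ends of the run go from $\le 0$ to $\ge 0$. Hence the Gibbs weight does not decrease.
\item The conditions defining the $m_1$-event, $\widetilde R_N$ and $\widetilde L_N$ depend only on $\min\{|m_k-m^*|,|m_k+m^*|\}$, which is flip invariant. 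So the image lies in the same event intersected with $\mathcal{A}^{+,+}_{r,l}$.
\item For fixed run endpoints the map is injective. This gives $\mu_{N,\beta,\alpha}(\,\cdot\,\cap\mathcal{A}^{+,-}_{r,l})\le s_N^2\,\mu_{N,\beta,\alpha}(\,\cdot\,\cap\mathcal{A}^{+,+}_{r,l})$, and analogously for $\mathcal{A}^{-,+}_{r,l}$.
\end{enumerate}
The factor $s_N^2$ is harmless because $\log s_N=o(N/s_N)$. With this comparison replacing your final paragraph, the remainder of your argument proceeds as in the paper.
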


\begin{proof}

For a fixed $\widetilde{\delta}>0$ (small enough and to be chosen later), we will define random variables $\widetilde{R}_N$ and $\widetilde{L}_N$ by
\begin{flalign*}
    &\widetilde{R}_N = \inf \left\{ i=1, \ldots, s_N \, :\, \min \left\{|m_i -m^*|, |m_i + m^*| \right\} \le \widetilde{\delta} \right\} \\
    & \widetilde{L}_N = \inf \left\{ i=1, \ldots, s_N \, :\, \min \left\{|m_{s_N-i} -m^*|, |m_{s_N-i} + m^*| \right\} \le \widetilde{\delta} \right\}.
\end{flalign*}
By Lemma \ref{LT1} we know that the event $2 \le \widetilde{R}_N < s_N-\widetilde{L}_N \le s_N$ occurs with high probability and hence we will estimate
\begin{flalign} \label{fixRandL}
&\mu_{N,\beta, \alpha} \left(\min \left\{|m_1 - m^*|, |m_1 + m^*|\right\}> \eps, 2 \le \widetilde{R}_N < s_N-\widetilde{L}_N \le s_N\right) \nonumber \\
\le & \sum_{2\le r <s_N-l\le s_N}\mu_{N, \beta, \alpha} \left(\min \left\{|m_1 - m^*|, |m_1 + m^*|\right\}> \eps, \, \widetilde{R}_N=r, \widetilde{L}_N = l \right).
\end{flalign}
For fixed $r,l$, we define the set $\mathcal{A}_{r,l}\coloneqq \left\{ \widetilde{R}_N =r, \widetilde{L}_N = l \right\}$. More precisely, we will further divide these sets into the sets
$$
\mathcal{A}_{r,l}^{+,+} \coloneqq \left\{ \widetilde{R}_N =r, \widetilde{L}_N = l , m_r>0, m_{s_N-l}>0\right\},
$$
and $\mathcal{A}_{r,l}^{-,-}, \mathcal{A}_{r,l}^{+,-}, \mathcal{A}_{r,l}^{-,+}$ defined similarly. On the event that $\mathcal{A}_{r,l}^{+,+}$ occurs, by definition it is true that
$$
 0\le | m_r-m^*|, |m_{s_N-l}-m^*| \le \widetilde{\delta} \quad \text{and} \quad |m_k- (\pm m^*)| > \widetilde{\delta } \quad \forall k=2,\ldots, r-1,s_N -l +1, \ldots, s_N.
$$
In order to bound the partition function, we will again partially bound the $s_N$-fold sum by carefully chosen summands. More precisely, let us define $\mathcal{I}_{l,r} \coloneqq \{1,\ldots, r, s_N-l,\ldots, s_N\}$, then for $k\in \mathcal{I}_{l,r}$, 
we bound the sum over $m_k$ by the summand $m_k=\widetilde{m}$, where $\widetilde{m}\in \mathcal{A}_N$ maximizes $F_{\beta,\alpha}$ on $\mathcal{A}_N$. Hence we obtain
\begin{flalign*}
    Z_{N,\beta, \alpha} &= \sum_{m_1, \ldots, m_{s_N}} \exp\left\{ \frac{1}{2}\frac{N}{s_N} \sum_{k=1}^{s_N}\beta m_k^2 + \alpha m_k(m_{k-1}+m_{k+1}) \right\} \frac{1}{2^N} \prod_{k=1}^{s_N} { \frac{N}{s_N} \choose \frac{N}{s_N}\frac{1+m_k}{2} } \\ 
    & \ge \exp\left\{\frac{1}{2}\frac{N}{s_N}\sum_{ k\in \mathcal{I}_{l-1,r-1}} (\beta + 2\alpha)\widetilde{m}^2 \right\}\exp\left\{\frac{1}{2}\frac{N}{s_N}(\beta+ \alpha) 2 \widetilde{m}^2 \right\} \prod_{\mathcal{I}_{l,r}}\frac{1}{2^{\frac{N}{s_N}}}{ \frac{N}{s_N} \choose \frac{N}{s_N}\frac{1+\widetilde{m}}{2}} \\
    & \quad \times\sum_{m_{r+1}, \ldots, m_{s_N-l-1}}\exp\left\{ \frac{1}{2}\frac{N}{s_N}2\alpha \widetilde{m}(m_{r+1} + m_{s_N-l-1})\right\} \exp\left\{ \frac{1}{2}\frac{N}{s_N}\sum_{k=r+1}^{s_N-l-1}\beta m_k^2 \right\} \\
    & \quad \times \exp\left\{ \frac{1}{2}\frac{N}{s_N}\sum_{k=r+2}^{s_N-l-2}\alpha m_k (m_{k-1}+m_{k+1}) \right\}  \prod_{k=r+1}^{s_N-l-1}\frac{1}{2^{\frac{N}{s_N}}} {\frac{N}{s_N}\choose \frac{N}{s_N}\frac{1+m_k}{2}}\\
    &\ge \exp\left\{ \frac{N}{s_N} (r+l-1) F_{\beta+2\alpha}(\widetilde{m}) + 2\frac{N}{s_N} F_{\beta + \alpha}(\widetilde{m}) - V_{N,r,l} \right\} \exp\left\{ \frac{N}{s_N} \alpha2 (\widetilde{m}- m^*)\right\}\widetilde{Z}_N^{r,l},
\end{flalign*}
where $V_{N,r,l} = (r+l+1) \left(\log\left( \sqrt{\frac{N (1-\widetilde{m}^2) \pi}{s_N}}\right)+ o(1) \right)$ denotes again the lower order terms from Stirlings' formula, and
\begin{flalign*}
\widetilde{Z}_N^{r,l} =& \sum_{m_r,\ldots, m_{s_N-l}} \exp\left\{\frac{1}{2}\frac{N}{s_N}\left(2\alpha m^*(m_{r+1} + m_{s_N-l-1}) +\sum_{k=r+1}^{s_N-l-1}\beta m_k^2 + \sum_{k=r+2}^{s_n-l-2}\alpha m_k(m_{k-1} + m_{k+1})  \right) \right\} \\
& \quad \times \prod_{k=r}^{s_N-l}\frac{1}{2^{\frac{N}{s_N}}} {\frac{N}{s_N}\choose \frac{N}{s_N}\frac{1+m_k}{2}} \\
&\eqqcolon \sum_{m_r,\ldots, m_{s_N-l}} \exp\left\{\widetilde{H}_N^{r,l} \right\}\prod_{k=r}^{s_N-l}\frac{1}{2^{\frac{N}{s_N}}} {\frac{N}{s_N}\choose \frac{N}{s_N}\frac{1+m_k}{2}},
\end{flalign*}
i.e.\ $\widetilde{Z}_N^{r,l}$ is the partition function of a similar model as the original one but with only the blocks $m_{r+1}, \ldots, m_{s_N-l-1}$ and with $2m^*$-boundary conditions instead of periodic boundary conditions. Recall from (\ref{maximizer}) that $F_{\beta + 2\alpha}(\widetilde{m}) \le F_{\beta + 2\alpha}(m^*) + \delta$ implying that
\begin{equation}\label{Z_rl}
     Z_{N,\beta, \alpha} \ge  \exp\left\{ \frac{N}{s_N} (r+l-1) (F_{\beta+2\alpha}(m^*) - \delta) \right\} \exp\left\{ 2\frac{N}{s_N} F_{\beta + \alpha}(\widetilde{m}) + V_{N,r,l} \right\} \exp\left\{- \frac{N}{s_N} \alpha2 |\widetilde{m}- m^*| \right\}\widetilde{Z}_N^{r,l}
\end{equation}
We will plug this into the Gibbs measure later but first we will find the right upper bound for the Hamiltonian. To that end, we will apply the Cauchy-Schwarz bound $m_km_{k+1} \le \frac{m_k^2}{2} + \frac{m_{k+1}^2}{2}$ for $k=1,\ldots,r-1, \ldots, s_N-l,\ldots, s_N$ and obtain
\begin{flalign*}
   \sum_{k=1}^{s_N} \beta m_k^2 + \alpha m_k(m_{k+1} + m_{k-1}) &\le \sum_{k\in \mathcal{I}_{l-1,r-1}} (\beta + 2\alpha)m_k^2  + \sum_{k=r+1}^{s_N-l-1}\beta m_k^2 + \sum_{k=r+2}^{s_N-l-2} \alpha m_k(m_{k-1} + m_{k+1})  \\
    & \quad + (\beta + \alpha)(m_r^2 + m_{s_N-l}^2) + 2\alpha m_rm_{r+1} + 2\alpha m_{s_N-l-1}m_{s_N-l} 
\end{flalign*}
Using the fact that on the set where $|m_r- m^*|,|m_{s_N-l}- m^*| <\widetilde{\delta}$ it holds that 
$$
2\alpha ( m_r m_{r+1} + m_{s_N-l-1}m_{s_N-l}) \le 2\alpha m^*(m_{r+1} + m_{s_N-l-1}) + 4\alpha\widetilde{\delta} 
$$
and therefore
\begin{flalign}\label{H_rl}
    &\frac{1}{2}\frac{N}{s_N}\sum_{k=1}^{s_N} \beta m_k^2 + \alpha m_k(m_{k+1} + m_{k-1}) \nonumber \\
    \le& \frac{1}{2}\frac{N}{s_N}\sum_{k\in \mathcal{I}_{l-1,r-1}} (\beta + 2\alpha)m_k^2  + \widetilde{H}_N^{r,l} + \frac{N}{s_N}2\alpha \widetilde{\delta} + \frac{1}{2}\frac{N}{s_N}(\beta + \alpha)(m_r^2 + m_{s_N-l}^2)
\end{flalign}
If we now plug (\ref{Z_rl}) and (\ref{H_rl}) into the Gibbs measure, we see that the measure factorizes for all coordinates between $s_N-l$ and $r$ while for the remaining (still interacting) coordinates, the numerator and denominator coincide and therefore cancel. More precisely, by rearranging the terms from (\ref{Z_rl}) and (\ref{H_rl}), one gets
\begin{flalign*}
    &\mu_{N, \beta, \alpha}\left( \min\left\{|m_1 - m^*|,|m_1 + m^*|\right\}> \eps, \,  \mathcal{A}_{r,l}^{++}\right) \\
    \le& \exp\left\{\frac{N}{s_N}(r+l-1)\delta  + V_{N,r,l} \right\} \exp\left\{ \frac{N}{s_N}2\alpha|m^* - \widetilde{m}| \right\}\frac{1}{\widetilde{Z}_N^{r,l}}\\
    &\quad \times \left(\sum_{m_r} \1_{|m_r-m^*| \le \widetilde{\delta}} \exp\left\{ \frac{N}{s_N}(F_{\beta + \alpha}(m_r) - F_{\beta + \alpha}(\widetilde{m}))\right\} \right)^2\\
    & \quad \times\left(\sum_{m_1} \1_{\{   \min\left\{|m_1 - m^*|,|m_1 + m^*|\right\}> \eps\}} \exp \left\{ \frac{N}{s_N} \left(F_{\beta+2 \alpha}(m_1)-F_{\beta +2 \alpha}(m^*)\right) \right\} \right) \\
    & \quad \times \left( \sum_{m_k} \1_{\{ \min \left\{ |m_k - m^*|, |m_k + m^*|\right\}>\widetilde{\delta}\}} \exp\left\{ \frac{N}{s_N} \left(F_{\beta + 2\alpha}(m_k)-F_{\beta + 2\alpha}(m^*)\right)\right\}\right)^{r+l-2} \widetilde{Z}_N^{r,l} \\
    \le & \exp\left\{\frac{N}{s_N}(r+l-1)\delta + V_{N,r,l} \right\} \exp\left\{ \frac{N}{s_N}2\alpha|m^* - \widetilde{m}|\right\} \left(\sum_{m_r} \1_{|m_r-m^*| \le \widetilde{\delta}} \exp\left\{ \frac{N}{s_N}(F_{\beta + \alpha}(m_r) - F_{\beta + \alpha}(\widetilde{m}))\right\} \right)^2 \\
     & \quad \times  \exp\left\{- \frac{N}{s_N} c_{\beta,\alpha}(\eps)\right\} \exp\left\{-\frac{N}{s_N}(r+l-2) c_{\beta, \alpha}(\widetilde{\delta}) \right\} \left( \frac{N}{s_N}\right)^{r+l-1},
\end{flalign*}
where the last step follows (as in the proof of Lemma \ref{LT1}) since we have reduced the model to $r-l-1$ many independent Curie-Weiss models and $c_{\beta,\alpha}(\eps), c_{\beta, \alpha }(\widetilde{\delta})$ are the positive constants from Lemma \ref{LT1}. Further note that the distance between $m^*$ and $\widetilde{m}$ is actually quite small. Indeed, since $\widetilde{m}$ maximizes the continuous function $F_{\beta + 2\alpha}$ on the set $\mathcal{A}_N$, which is an approximation of the interval $[-1,1]$ with spacing of size $2\frac{s_N}{N}$, in particular this implies that $|m^* - \widetilde{m}|<\frac{s_N}{N}$. It remains to find a bound for the contribution of the $r$'th and $(s_N-l)$'th coordinates. Since $m^* = m^*(\beta + 2\alpha)$ maximizes $F_{\beta + 2\alpha}$ but not $F_{\beta + \alpha}$, we cannot apply the same arguments. Instead note that still 
$$
|\widetilde{m}-m_r|\le|\widetilde{m}- m^*| + |m^*- m_r| = \mathcal{O}\left(\frac{s_N}{N}\right) + \widetilde{\delta}
$$
and the function $F_{\beta + \alpha}$ is continuous, there exists a small constant $c(\widetilde{\delta})$ that goes to zero as $\widetilde{\delta}\to 0$, such that $|F_{\beta + \alpha}(m_r) - F_{\beta + \alpha}(\widetilde{m})| \le c(\widetilde{\delta})$. If we then bound the sum by the number of summands it follows that
$$
\left(\sum_{m_r} \1_{|m_r-m^*| \le \widetilde{\delta}} \exp\left\{ \frac{N}{s_N}(F_{\beta + \alpha}(m_r) - F_{\beta + \alpha}(\widetilde{m}))\right\} \right) \le \exp\left\{ \frac{N}{s_N} c(\widetilde{\delta})\right\} \widetilde{\delta}\frac{N}{s_N}
$$
Hence, we finally obtain the following upper bound
\begin{flalign*}
    &\mu_{N, \beta, \alpha}\left( \min\left\{|m_1 - m^*|,|m_1 + m^*|\right\}> \eps, \,  \mathcal{A}_{r,l}^{++}\right) \\
    \le & \widetilde{\delta}^2\exp\left\{\frac{N}{s_N}(r+l-1)\delta + V_{N,r,l} \right\}\exp\left\{ \frac{N}{s_N} c(\widetilde{\delta})\right\}\exp\left\{- \frac{N}{s_N} c_{\beta,\alpha}(\eps)\right\} \\
    & \qquad \qquad \times \exp\left\{-\frac{N}{s_N}(r+l-2) c_{\beta, \alpha} (\widetilde{\delta}) \right\}\left( \frac{N}{s_N}\right)^{r+l+1} \\
    & \le \exp\left\{ -\frac{N}{s_N}\left( c_{\beta, \alpha}(\eps) - c(\widetilde{\delta}) - 2\frac{s_N}{N}\log\frac{N}{s_N} \right) \right\} \\
    & \qquad \qquad \times \exp\left\{ -(r+l-1)\frac{N}{s_N}\left( \frac{r+l-2}{r+l-1}c_{\beta, \alpha}(\widetilde{\delta}) - \delta - \frac{s_N}{N}\left(\log\frac{N}{s_N} + V_{N,r,l}\right) \right) \right\}
\end{flalign*}
Now we want to choose $\widetilde{\delta}$ and $\delta$ small enough such that the above converges to 0. This is indeed possible: Choose first $\widetilde{\delta}$ small enough such that $c(\widetilde{\delta}) < \frac{c_{\beta, \alpha}(\eps)}{2}$ and note that for $N$ large enough it is true that $2\frac{s_N}{N}\left(\log\frac{N}{s_N}+ V_{N,r,l} \right) < \frac{c_{\beta,\alpha}(\eps)}{4}$. Then choose $\delta < \frac{c_{\beta, \alpha}(\widetilde{\delta})}{4}$ and again for $N$ large enough we have that $\frac{s_N}{N}\log \frac{N}{s_N} < \frac{c_{\beta , \alpha}(\widetilde{\delta})}{8}$. It follw that
\begin{flalign} \label{eq:A++}
&\mu_{N, \beta, \alpha}\left( \min\left\{|m_1 - m^*|,|m_1 + m^*|\right\}> \eps, \,  \mathcal{A}_{r,l}^{++}\right) \nonumber \\
\le& \exp\left\{ -\frac{N}{s_N} \frac{c_{\beta, \alpha}(\eps)}{4} \right\} \exp\left\{ -(r+l-1)\frac{N}{s_N} \frac{c_{\beta, \alpha}(\widetilde{\delta})}{8} \right\}
\end{flalign}

Further note that we get the same bound for 
$$
\mu_{N, \beta, \alpha}\left( \min\left\{|m_1 - m^*|,|m_1 + m^*|\right\}> \eps, \,  \mathcal{A}_{r,l}^{--}\right)
$$
by replacing the sums in $Z_{N,\beta,\alpha}$ by the summand $m_k=-\widetilde{m}$. On the events $\mathcal{A}_{r,l}^{+-}$ and $\mathcal{A}_{r,l}^{-+}$, we know that there are (at least) two sign changes in the chain of blocks. If we flip the entire negative component that includes $m_{s_N-l}$ or $m_r$ respectively, then the energy on the boundary of this component increases (because the spins are now aligned) while the energy in the interior remains the same. Note that flipping all the coordinates brings us back to the cases that were already treated. More precisely, on the set $\mathcal{A}_{r,l}^{+-}\cap \left\{ m_1>0 \right\}$, there must exist $l_1, l_2$ with $0\le l_1 \le l \le l_2 < s_N-r$ such that $m_{s_N-l_1}, m_{s_N-l_2}\le 0$. Define the random variables
\begin{flalign*}
    &L_N^1 \coloneqq \inf\{\widetilde{l}\ge 0\, :\, m_{s_N-\widetilde{l} }\le 0\} \\
    &L_N^2 \coloneqq \inf \{ \widetilde{l}\ge l \, : \, m_{s_N - \widetilde{l}}\le 0\},
\end{flalign*}
i.e.\ $S_{s_N-L_N^1}$ is the closest block to $S_1$ with a non-positive magnetization and $S_{s_N-L_N^2}$ is the last block after $S_{s_N-l}$ with a non-positive magnetization. For $N\in \N$ and fixed $l_1,l_2$, denote by $f_{l_1,l_2}: [-1,1]^{s_N} \to [-1,1]^{s_N}$ the function that takes a vector and flips all coordinates between $s_N-l_2$ and $s_N - l_1$ while leaving all other coordinates unchanged. Then
\begin{flalign*}
    &\mu_{N,\beta, \alpha}\left( \min\{ |m_1-m^*|,|m_1+m^*|\}>\eps, \, m_1\ge 0, \, \mathcal{A}_{r,l}^{+-}, \, L_N^1=l_1, \, L_N^2 =l_2 \right) \\
    =& \frac{1}{Z_{N,\beta, \alpha}}\sum_{m_1, \ldots, m_{s_N}} \exp \left\{\frac{1}{2}\frac{N}{s_N}m^TAm \right\} \1_{\{\min\{ |m_1-m^*|,|m_1+m^*|\}>\eps, \, m_1\ge 0\}} \1_{\mathcal{A}_{r,l}^{+-}} \\
    & \quad \times \1_{\{L_N^1=l_l^1 , \, L_N^2 =l_2\}}\frac{1}{2^N}\prod_{k=1}^{s_N}{ \frac{N}{s_N} \choose \frac{N}{s_N}\frac{1+m_k}{2} }\\
    =& \frac{1}{Z_{N,\beta, \alpha}}\sum_{m_1, \ldots, m_{s_N}} \exp \left\{\frac{1}{2}\frac{N}{s_N}f_{l_1, l_2}(m)^TAf_{l_1,l_2}(m) \right\} \1_{\{\min\{ |m_1-m^*|,|m_1+m^*|\}>\eps, \, m_1\ge 0\}} \1_{\mathcal{A}_{r,l}^{+-}} \\
    & \quad \times \1_{\{L_N^1=l_l^1 , \, L_N^2 =l_2\}}\frac{1}{2^N}\prod_{k=1}^{s_N}{ \frac{N}{s_N} \choose \frac{N}{s_N}\frac{1+m_k}{2} }.
\end{flalign*}
Note that 
$$
 \1_{\{\min\{ |m_1-m^*|,|m_1+m^*|\}>\eps, \, m_1\ge 0\}} =  \1_{\{\min\{ |f_{l_1,l_2}(m)_1-m^*|,|f_{l_1,l_2}(m)_1+m^*|\}>\eps, \, f_{l_1,l_2}(m)_1\ge 0\}},
$$
since $1 \notin  \{s_N-l_2,\ldots,  s_N-l_1\}$ and therefore $m_1 = f_{l_1,l_2}(m)_1$. The same is true for all coordinates $k \in \mathcal{I}_{l_1,r}= \{1,\ldots,r,s_N-l_1+1, \ldots, s_N\}$. On all other coordinates it changes the sign hence it follows that on the set $\{L_N^1=l_1\, , \, L_N^2=l_2\}$,
\begin{flalign*}
\1_{\mathcal{A}_{r,l}^{+-}} &\overset{\textbf{def}}{=} \1_{\{ |m_r-m^*|< \widetilde{\delta}\}} \1_{\{ |m_{s_N-l}+m^*|< \widetilde{\delta}\}} \prod_{k \in \mathcal{I}_{l-1,r-1}} \1_{\{ \min\{|m_k - m^*|, |m_k+ m^*|\}\ge\widetilde{\delta}\}} \\
     &  =  \1_{\{ |f_{l_1,l_2}(m)_r-m^*|< \widetilde{\delta}\}} \1_{\{ |f_{l_1,l_2}(m)_{s_N-l}-m^*|< \widetilde{\delta}\}} \prod_{{k \in \mathcal{I}_{l-1,r-1}}} \1_{\{ \min\{|f_{l_1,l_2}(m)_k - m^*|, |f_{l_1,l_2}(m)_k+ m^*|\}\ge\widetilde{\delta}\}} \\
     &  = \1_{\mathcal{A}_{r,l}^{++}}.
\end{flalign*}
Plugging this into the above equation yields
\begin{flalign*}
     &\mu_{N,\beta, \alpha}\left( \min\{ |m_1-m^*|,|m_1+m^*|\}>\eps, \, m_1\ge 0, \, \mathcal{A}_{r,l}^{+-}, \, L_N^1=l_1, \, L_N^2 =l_2 \right) \\
     =& \frac{1}{Z_{N,\beta, \alpha}}\sum_{m_1, \ldots, m_{s_N}} \exp \left\{\frac{1}{2}\frac{N}{s_N}f_{l_1, l_2}(m)^TAf_{l_1,l_2}(m) \right\} \1_{\{\min\{ |f_{l_1,l_2}(m)_1-m^*|,|f_{l_1,l_2}(m)_1+m^*|\}>\eps, \, f_{l_1,l_2}(m)_1\ge 0\}} \\
    & \quad \times  \1_{\mathcal{A}_{r,l}^{++}}\1_{\{L_N^1=l_l^1 , \, L_N^2 =l_2\}}\frac{1}{2^N}\prod_{k=1}^{s_N}{ \frac{N}{s_N} \choose \frac{N}{s_N}\frac{1+f_{l_1,l_2}(m)_k}{2} } \\
    \le& \frac{1}{Z_{N,\beta, \alpha}}\sum_{m_1, \ldots, m_{s_N}} \exp \left\{\frac{1}{2}\frac{N}{s_N}m^TAm \right\} \1_{\{\min\{ |m_1-m^*|,|m_1+m^*|\}>\eps, \, m_1\ge 0\}} \\
    &\quad \times  \1_{\mathcal{A}_{r,l}^{++}}\frac{1}{2^N}\prod_{k=1}^{s_N}{ \frac{N}{s_N} \choose \frac{N}{s_N}\frac{1+m_k}{2} } \\
    &= \mu_{N,\alpha, \beta}\left( \min\{ |m_1-m^*|,|m_1+m^*|\}>\eps, \, m_1\ge 0, \, \mathcal{A}_{r,l}^{++}\right).
\end{flalign*}
Therefore
\begin{flalign*}
    &\mu_{N,\beta, \alpha}\left( \min\{ |m_1-m^*|,|m_1+m^*|\}>\eps, \, m_1\ge 0, \, \mathcal{A}_{r,l}^{+-},  \right) \\
    =&\sum_{0\le l_1 \le l_2 \le s_N}\mu_{N,\beta, \alpha}\left( \min\{ |m_1-m^*|,|m_1+m^*|\}>\eps, \, m_1\ge 0, \, \mathcal{A}_{r,l}^{+-}, \, L_N^1=l_1, \, L_N^2 =l_2 \right) \\
    \le& s_N^2 \mu_{N,\alpha, \beta}\left( \min\{ |m_1-m^*|,|m_1+m^*|\}>\eps, \, m_1\ge 0, \, \mathcal{A}_{r,l}^{++}\right)
\end{flalign*}
and for the case where $m_1\le 0$, one gets the same result by using $\mathcal{A}_{r,l}^{--}$. Putting all these cases together, we arrive at the bound
\begin{flalign*}
    &\mu_{N,\beta,\alpha}\left(  \min\{ |m_1-m^*|,|m_1+m^*|\}>\eps, \, \mathcal{A}_{r,l}\right) \\
    \le &2(s_N^2 + 1)\mu_{N,\beta,\alpha}\left(  \min\{ |m_1-m^*|,|m_1+m^*|\}>\eps, \, \mathcal{A}_{r,l}^{++}\right) \\
    \le & 2(s_N^2+1) \exp\left\{ -\frac{N}{s_N} \frac{c_{\beta, \alpha}(\eps)}{4} \right\} \exp\left\{ -(r+l-1)\frac{N}{s_N} \frac{c_{\beta, \alpha}(\widetilde{\delta})}{8} \right\} \\
    \le & 4 \exp\left\{ -\frac{N}{s_N} \frac{c_{\beta, \alpha}(\eps)}{4} \right\} \exp\left\{ -\frac{N}{s_N}\left((r+l-1) \frac{c_{\beta, \alpha}(\widetilde{\delta})}{8}- 2 \frac{s_N}{N}\log s_N \right) \right\}
\end{flalign*}
Since the above upper bound is decreasing in $r+l$, we can simply use the uniform bound $r+l-1 \ge 1$. Hence, if we plug this into (\ref{fixRandL}), we obtain
\begin{flalign*}
    &\mu_{N,\beta, \alpha} \left(\min \left\{|m_1 - m^*|, |m_1 + m^*|\right\}> \eps, 1 <\widetilde{R}_N < s_N-\widetilde{L}_N \le s_N\right)  \\
    \le & s_N^2 4 \exp\left\{ -\frac{N}{s_N} \frac{c_{\beta, \alpha}(\eps)}{4} \right\} \exp\left\{ -\frac{N}{s_N}\left( \frac{c_{\beta, \alpha}(\widetilde{\delta})}{8}- 2 \frac{s_N}{N}\log s_N \right) \right\} \\
    \le & 4 \exp\left\{ -\frac{N}{s_N} \frac{c_{\beta, \alpha}(\eps)}{4} \right\} \exp\left\{ -\frac{N}{s_N}\left( \frac{c_{\beta, \alpha}(\widetilde{\delta})}{8}- 4 \frac{s_N}{N}\log s_N \right) \right\},
\end{flalign*}
where we bounded the sum running over $2\le r \le s_N-1$ and $1\le l\le s_N-2$ each by a factor $s_N$. Note that since we assumed that $s_N = o\left(\frac{N}{\log N} \right)$, again for $N$ large enough $4\frac{s_N}{N} \log s_N < \frac{c_{\beta,\alpha}(\widetilde{\delta})}{16}$ and finally this yields
\begin{flalign*}
&\mu_{N,\beta, \alpha} \left(\min \left\{|m_1 - m^*|, |m_1 + m^*|\right\}> \eps \right) \\
\le&  \exp\left\{ -\frac{N}{s_N}\left(\frac{c_{\beta, \alpha}(\eps)}{4} + \mathcal{O}(1) \right)\right\} + \exp \left\{-N \left(c_{\beta, \alpha}(\widetilde{\delta}) + o(1) \right)\right\}
\end{flalign*}

\end{proof}

By a union bound it follows immediately that $\sup_{i=1}^{s_N} \min\left\{ |m_i-m^*|, |m_i+m^*| \right\}$ converges to 0 in probability, since
$$
\mu_{N\beta, \alpha} \left(\sup_{i=1}^{s_N} \min\left\{ |m_i-m^*|, |m_i+m^*| \right\}> \eps \right) \le s_N \mu_{N\beta, \alpha} \left(\min\left\{ |m_1-m^*|, |m_1+m^*| \right\}> \eps\right).
$$
It is easy to deduce that in this case, all $m_i's$ have to have the same sign with high probability. 
\begin{lemma}
    $$
    \mu_{N,\beta,\alpha}\left( \sup_{k=1}^{s_N} \min\left\{ |m_k-m^*|, |m_k+m^*| \right\}\le \eps, \exists i : m_im_{i+1}<0 \right) \le s_N^2\exp\left\{ -\frac{N}{s_N} 2 \alpha (m^*-\eps)^2 \right\}
    $$
\end{lemma}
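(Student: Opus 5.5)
On the event $E_\eps:=\{\sup_k \min\{|m_k-m^*|,|m_k+m^*|\}\le\eps\}$ with $\eps<m^*$, every block magnetization has a definite sign and $|m_k|\ge m^*-\eps$. If some $m_im_{i+1}<0$, then by periodicity there are at least two sign changes around the cycle. The plan is a flipping (Peierls-type) argument in the spirit of the map $f_{l_1,l_2}$ from the previous proof. Decompose according to a pair of consecutive sign changes: let $i<j$ be indices such that $m_i m_{i+1}<0$, $m_jm_{j+1}<0$, and all of $m_{i+1},\dots,m_j$ have the same sign. There are at most $s_N^2$ such pairs $(i,j)$, which produces the prefactor $s_N^2$ in the union bound. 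So it suffices to show that for fixed $(i,j)$,
$$\mu_{N,\beta,\alpha}\big(E_\eps,\ \text{the sign pattern flips exactly at } (i,i+1),(j,j+1)\big)\le \exp\Big\{-\frac{N}{s_N}2\alpha(m^*-\eps)^2\Big\}.$$

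For fixed $(i,j)$, let $f=f_{i,j}$ flip the coordinates $i+1,\dots,j$. This map is an involution on the magnetization lattice $\mathcal A_N^{s_N}$. It preserves the binomial weights $\prod_k\binom{N/s_N}{\frac{N}{s_N}\frac{1+m_k}{2}}$ (by symmetry of binomials) and preserves $E_\eps$. The quadratic form changes only at the two boundary bonds:
$$\tfrac12 f(m)^TAf(m)-\tfrac12 m^TAm = -2\alpha\,(m_im_{i+1}+m_jm_{j+1}) = 2\alpha\,(|m_im_{i+1}|+|m_jm_{j+1}|)\ge 4\alpha(m^*-\eps)^2 .$$
In the degenerate case $j=i+1$ mod $s_N$ with only two sign changes, the same computation holds. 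Hence each configuration $m$ in the event has Gibbs weight at most $\exp\{-\frac{N}{s_N}4\alpha(m^*-\eps)^2\}$ times the weight of its image $f(m)$.

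Since $f$ is injective, summing over $m$ and dividing by $Z_{N,\beta,\alpha}$ gives
$$\mu(\cdot)\le e^{-\frac{N}{s_N}4\alpha(m^*-\eps)^2}\,\mu(f(\text{event}))\le e^{-\frac{N}{s_N}4\alpha(m^*-\eps)^2}.$$
This is even stronger than claimed. The stated exponent $2\alpha(m^*-\eps)^2$ follows a fortiori, and the weaker constant would also allow using just one bond if one prefers a cruder accounting. Summing over the at most $s_N^2$ choices of $(i,j)$ yields the lemma.

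The only step needing care is the bookkeeping of the decomposition. The events indexed by $(i,j)$ must cover $\{E_\eps,\ \exists i: m_im_{i+1}<0\}$; overlaps are harmless for an upper bound. One also has to check that the energy change is exactly the two boundary bond terms. The interior bonds are unchanged because both endpoints flip, and the diagonal $\beta m_k^2$ terms are unchanged. With the cyclic identification $s_N+1=1$ this is a direct check.
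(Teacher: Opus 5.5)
Your argument is correct. It reaches the bound by a somewhat different mechanism than the paper.

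Both proofs start with the same union bound over pairs $(i,j)$ of sign changes. The paper only requires $i\neq j$. Your extra requirement that $m_{i+1},\dots,m_j$ share a sign is harmless but unnecessary, since flipping any contiguous segment changes only its two boundary bonds.

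For fixed $(i,j)$ the routes diverge. The paper lower-bounds $Z_{N,\beta,\alpha}$ by an auxiliary sum $\widetilde{Z}_{N,\beta,\alpha}$. This sum runs over configurations with $m_im_{i+1}<0$ and $m_jm_{j+1}<0$, with the two boundary bond terms simply deleted. The paper then observes that on the event these deleted terms contribute at most $-\frac{N}{s_N}2\alpha(m^*-\eps)^2$ to the Hamiltonian.

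The inequality $Z_{N,\beta,\alpha}\ge \widetilde{Z}_{N,\beta,\alpha}$ is not just a restriction of the sum, because on that set the deleted terms are negative. It holds precisely because of the segment flip you use: the flipped configurations carry these bonds with positive sign. Equivalently, averaging over the flip produces a factor $\cosh(\cdot)\ge 1$.

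Your injection argument makes this step explicit, which the paper's write-up leaves implicit. It also gains a factor 2. The flip turns $-|m_im_{i+1}|$ into $+|m_im_{i+1}|$ rather than into $0$, so you get the exponent $4\alpha(m^*-\eps)^2$ instead of $2\alpha(m^*-\eps)^2$. What the paper's formulation buys is uniformity with its other arguments, which all lower-bound the partition function by a restricted or modified partition function.
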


\begin{proof}
First note that if there exists an index $i$ such that $m_i$ and $m_{i+1}$ don't have the same sign, then there must exists another index $j\neq i$ such that the same is true for $m_jm_{j+1}$. Hence we obtain
\begin{flalign*}
    &\mu_{N,\beta,\alpha}\left( \sup_{k=1}^{s_N} \min\left\{ |m_k-m^*|, |m_k+m^*| \right\}\le \eps, \exists i : m_im_{i+1}<0 \right) \\
    \le & \sum_{i=1}^{s_N} \mu_{N,\beta,\alpha}\left( \sup_{k=1}^{s_N} \min\left\{ |m_k-m^*|, |m_k+m^*| \right\}\le \eps, m_im_{i+1}<0 \right)\\
    = & \sum_{i=1}^{s_N} \mu_{N,\beta,\alpha}\left( \sup_{k=1}^{s_N} \min\left\{ |m_k-m^*|, |m_k+m^*| \right\}\le \eps, m_im_{i+1}<0, \exists j\neq i : m_jm_{j+1}<0 \right) \\
    \le &\sum_{i=1}^{s_N}\sum_{j\neq i} \mu_{N,\beta,\alpha}\left( \sup_{k=1}^{s_N} \min\left\{ |m_k-m^*|, |m_k+m^*| \right\}\le \eps, m_im_{i+1}<0,  m_jm_{j+1}<0 \right) \\
    \le & s_N^2 \max_{i\neq j}  \mu_{N,\beta,\alpha}\left( \sup_{k=1}^{s_N} \min\left\{ |m_k-m^*|, |m_k+m^*| \right\}\le \eps, m_im_{i+1}<0,  m_jm_{j+1}<0 \right)
\end{flalign*}
Fix $i$ and $j$ and let us give again a lower bound for the partition function:
\begin{flalign*}
    Z_{N,\beta,\alpha} &= \sum_{m_1, \ldots, m_{s_N}} \exp\left\{ \frac{1}{2}\frac{N}{s_N}\sum_{k=1}^{s_N} \beta m_k^2 + 2\alpha m_km_{k+1} \right\}\frac{1}{2^N}\prod_{k=1}^{s_N}{ \frac{N}{s_N} \choose \frac{N}{s_N}\frac{1+m_k}{2}} \\
    &\ge \sum_{m_1, \ldots, m_{s_N}} \1_{m_im_{i+1}<0}\1_{m_jm_{j+1}<0} \exp\left\{ \frac{1}{2}\frac{N}{s_N}\sum_{\substack{k=1 \\k\neq i,j}}^{s_N} \beta m_k^2 + 2\alpha m_km_{k+1} \right\}\frac{1}{2^N}\prod_{k=1}^{s_N}{ \frac{N}{s_N} \choose \frac{N}{s_N}\frac{1+m_k}{2}} \\
    &\quad \times \exp\left\{\frac{1}{2}\frac{N}{s_N}\beta (m_i^2 + m_j^2) \right\} \\
    &\eqqcolon \widetilde{Z}_{N,\beta,\alpha}.
\end{flalign*}
Next we will use the fact that on the event $ \sup_{k=1}^{s_N} \min\left\{ |m_k-m^*|, |m_k+m^*| \right\}\le \eps$, the event $m_im_{i+1}, m_jm_{j+1}<0$ already implies $m_im_{i+1}, m_jm_{j+1} \le -(m^*-\eps)^2$. It follows that
\begin{flalign*}
   & \mu_{N,\beta, \alpha}\left( \sup_{k=1}^{s_N} \min\left\{ |m_k-m^*|, |m_k+m^*| \right\}\le \eps, m_im_{i+1}<0,  m_jm_{j+1}<0 \right) \\
    \le& \frac{1}{\widetilde{Z}_{N,\beta,\alpha}}\exp\left\{ -\frac{N}{s_N} 2 \alpha (m^*-\eps)^2 \right\} \sum_{m_1, \ldots, m_{s_N}} \1_{m_im_{i+1}<0}\1_{m_jm_{j+1}<0}\1_{\left\{\sup_{k=1}^{s_N} \min\left\{ |m_k-m^*|, |m_k+m^*| \right\}\le \eps \right\}} \\
    & \times\exp\left\{ \frac{1}{2}\frac{N}{s_N}\sum_{\substack{k=1 \\k\neq i,j}}^{s_N} \beta m_k^2 + 2\alpha m_km_{k+1} \right\} \frac{1}{2^N}\prod_{k=1}^{s_N}{ \frac{N}{s_N} \choose \frac{N}{s_N}\frac{1+m_k}{2}} \exp\left\{\frac{1}{2}\frac{N}{s_N}\beta (m_i^2 + m_j^2) \right\} \\
    &\le \exp\left\{ -\frac{N}{s_N} 2 \alpha (m^*-\eps)^2 \right\},
\end{flalign*}
where the last step is true since $\1_{\left\{\sup_{k=1}^{s_N} \min\left\{ |m_k-m^*|, |m_k+m^*| \right\}\le \eps \right\}} \le 1$ and if we drop this indicator function, the sum simply becomes $\widetilde{Z}_{N,\beta,\alpha}$
\end{proof}

\section{Proof of Theorem \ref{CLT}\,(ii)}
The proof for the CLT in the low temperature regime is similar in spirit as for high temperature but new challenges arise due to the non-uniqueness for which we need technical adjustments. In the whole section we will assume that $\beta + 2\alpha <1$. Recall that by Lemma \ref{LowTemp}, in the low temperature case the function $\varphi_N$ has two minimizers that we denote by $\overrightarrow m^*$ and $-\overrightarrow m^* $, where $\overrightarrow m^*$ is the $s_N$-dimensional vector that carries the value $m^*$ in every coordinate. We want to prove that under the Gibbs measure, conditioned on the magnetization being in a box of of side length $2\delta$ around one of the minimizers, $\sqrt{\frac{N}{s_N}}(m\mp \overrightarrow m^*) \Rightarrow \mathcal{N}(0, \Sigma^*)$, where $\delta < 2m^*$.
We will prove this by showing convergence of the moment generating functions.

\subsection{Decomposing the integral} \label{subsec:dec}
For the remainder of this section, let us fix $A$, and hence $\alpha, \beta,m^*$. Let $Z\sim \mathcal{N}(0, A^{-1})$ be an $s_N$-dimensional Gaussian random variable independent of the magnetization and $t \in \R^{s_N}$ such that $t$ has only finitely many non-zero entries, then similarly to the proof of Lemma \ref{phi},
\begin{flalign*}
    &\E\left[ \exp \left\{ t^T\left(Z+ \sqrt{\frac{N}{s_N}}\left(m- \overrightarrow m^*\right)\right) \right\} \, \middle|  \, m\in [m^* - \delta , m^* + \delta]^{s_N}\right] \\
    =& \exp \left\{- \sqrt{\frac{N}{s_N}}t^T \overrightarrow m^* \right\} \E\left[ \exp \left\{ t^T\left(Z+ \sqrt{\frac{N}{s_N}}m\right) \right\} \, \middle|  \, m\in [m^* - \delta , m^* + \delta]^{s_N} \right] \\
    =& \exp \left\{- \sqrt{\frac{N}{s_N}}t^T \overrightarrow m^* \right\} c_{N,\delta} \sum_{\substack{\sigma \in \{-1,1\}^N \\ m\in[m^* - \delta , m^* + \delta]^{s_N}}} \frac{1}{2^N} \int_{\R^{s_N}} \exp\left\{ -\frac{1}{2} y^TAy + t^Ty\right\}\exp\left\{\sqrt{\frac{N}{s_N}} y^TAm \right\} \d^{s_N}y \\
    =& \exp \left\{- \sqrt{\frac{N}{s_N}}t^T \overrightarrow m^* \right\}  c_{N,\delta}  \left(\frac{N}{s_N} \right)^\frac{s_N}{2} \int_{\R^{s_N}} \exp\left\{ - \frac{1}{2}\frac{N}{s_N}x^TAx + \sqrt{\frac{N}{s_N}}t^Tx \right\} I_N(x,\overrightarrow m^*,\delta) \d^{s_N}x
\end{flalign*}
 where $c_{N,\delta} = \left(  \left(\frac{N}{s_N} \right)^\frac{s_N}{2} \int_{\R^{s_N}} \exp\left\{ - \frac{N}{2s_N}xTAx + \sqrt{\frac{N}{s_N}}t^Tx \right\} I_N(x,\overrightarrow m^*,\delta) \, \d^{s_N}x\right)^{-1}$ and the last step follows from the change of variables $y= \sqrt{\frac{N}{s_N}}x$ and defining
 \begin{align}\label{eq:I}
 I_N(x,\overrightarrow m^*,\delta) = \sum_{\substack{\sigma \in \{-1,1\}^N \\ m\in [m^* - \delta , m^* + \delta]^{s_N}}}  \frac{1}{2^N} \exp\left\{\frac{N}{s_N} x^TAm \right\}.
 \end{align}
 Further we define
 \begin{align}\label{eq:Ic}
I_N^c(x, \overrightarrow m^*, \delta) = \sum_{\substack{\sigma \in \{-1,1\}^N \\ m\notin [m^* - \delta , m^* + \delta]^{s_N}}}  \frac{1}{2^N} \exp\left\{\frac{N}{s_N} x^TAm \right\}
\end{align}
and as in the proof of Lemma \ref{phi}, we have 
 \begin{align}\label{eq:I+Ic}
I_N(x,\overrightarrow m^*,\delta) + I_N^c(x,\overrightarrow m^*,\delta) = \exp\left\{ \frac{N}{s_N}\sum_{k=1}^{s_N} \log\cosh (x^TAe_k)\right\}.
\end{align}
Recall the definition of $\varphi_N$ from \eqref{def_phi}. For $r>0$ small enough (to be chosen later), we will compare the moment generating function to
\begin{flalign}
    &J_N(t,r,\overrightarrow m^*): =  \left( \frac{N}{s_N}\right)^\frac{s_N}{2}  \exp \left\{ \frac{N}{s_N}\varphi_N(\overrightarrow m^*) - \sqrt{\frac{N}{s_N}}t^T \overrightarrow m^* \right\} \label{eq:J}
    \\
    &\quad \quad \quad \quad \times \int_{[m^*-r,m^*+r]^{s_N}}\exp\left\{ -\frac{N}{s_N} \varphi_N(x) + \sqrt{\frac{N}{s_N}} t^Tx\right\} \d^{s_N}x ,\nonumber
\end{flalign}
which deviates from the moment generating function by the terms
\begin{flalign}
    & K_N(t,r, \overrightarrow m^*, \delta) := \left( \frac{N}{s_N}\right)^\frac{s_N}{2}  \exp \left\{ \frac{N}{s_N}\varphi_N(\overrightarrow m^*) - \sqrt{\frac{N}{s_N}}t^T \overrightarrow m^* \right\} \label{eq:K}\\
    &\quad \quad \quad \quad \times \int_{[m^*-r,m^*+r]^{s_N}}\exp\left\{ -\frac{1}{2}\frac{N}{s_N}x^TAx + \sqrt{\frac{N}{s_N}} t^Tx\right\} I_N^c(x,\overrightarrow m^*,\delta) \d^{s_N}x ,\nonumber\\
    & L_N(t,r,\overrightarrow m^*, \delta): = \left( \frac{N}{s_N}\right)^\frac{s_N}{2}  \exp \left\{ \frac{N}{s_N}\varphi_N(\overrightarrow m^*) - \sqrt{\frac{N}{s_N}}t^T \overrightarrow m^* \right\} \label{eq:L}\\
    &\quad \quad \quad \quad \times \int_{\left([m^*-r, m^*+r]^{s_N} \right)^c}\exp\left\{ -\frac{1}{2}\frac{N}{s_N}x^TAx + \sqrt{\frac{N}{s_N}} t^Tx\right\} I_N(x,\overrightarrow m^*,\delta) \d^{s_N}x.\nonumber
\end{flalign}
We will shortly write $J_N(t), K_N(t), L_N(t)$ and drop the other arguments whenever it is clear from the context. Merging the terms, we retrieve the moment generating function
\begin{align}\label{eq:MGF_JKL}
 \E\left[ \exp \left\{ t^T\left(Z+ \sqrt{\frac{N}{s_N}}\left(m- \overrightarrow m^*\right)\right) \right\} \, \middle|  \, m\in [m^*-\delta,m^*+\delta]^{s_N} \right]=\frac{J_N(t) - K_N(t) + L_N(t)}{J_N(0) - K_N(0) + L_N(0)} .
\end{align}
This representation is inspired by \cite[(4.6)]{EllisWang}. 
It will turn out that only the $J_N$-terms will contribute in the limit and we will start by showing that $\frac{J_N(t)}{J_N(0)}$ converges to the moment generating function of an (infinite dimensional) normal distribution. Afterwards we will prove that the contributions of the $K_N$- and $L_N$-terms are negligible in the limit.

\subsection{The dominating term J}\label{subsec:J}
By a change of variables we obtain
\begin{flalign}\label{transformation1}
    \frac{J_N(t)}{J_N(0)} &=  \frac{ \int_{[m^*-r,m^*+r]^{s_N}} \exp\left\{ -\frac{N}{s_N}\left(\varphi_N(x) - \varphi_N(\overrightarrow m^*)\right)  + \sqrt{\frac{N}{s_N}} t^T(x-\overrightarrow m^*)\right\} \d^{s_N}x}{ \int_{[m^*-r,m^*+r]^{s_N}} \exp\left\{ -\frac{N}{s_N}\left(\varphi_N(x) - \varphi_N(\overrightarrow m^*)\right) \right\} \d^{s_N}x } \nonumber \\
    &= \frac{\int_{\left[-\sqrt{\frac{N}{s_N}}r, \sqrt{\frac{N}{s_N}}r \right]^{s_N}} \exp\left\{ -\frac{N}{s_N}\left(\varphi_N\left(\overrightarrow m^* + \sqrt{\frac{s_N}{N}}y\right) - \varphi_N(\overrightarrow m^*)\right)  + t^Ty\right\} \d^{s_N}y }{ \int_{\left[-\sqrt{\frac{N}{s_N}}r, \sqrt{\frac{N}{s_N}}r \right]^{s_N}} \exp\left\{ -\frac{N}{s_N}\left(\varphi_N\left(\overrightarrow m^* + \sqrt{\frac{s_N}{N}}y\right) - \varphi_N(\overrightarrow m^*)\right) \right\} \d^{s_N}y }
    \end{flalign}

\begin{lemma}\label{lem:log-concaveLT}
    For all $r>0$ sufficiently small (depending on $A$),
    $$
    \frac{J_N(t,r,\overrightarrow m^*)}{J_N(0, r, \overrightarrow m^*)} = \E_{\rho_N}\left[ \exp\left\{ t^T X \right\} \right],
    $$
    where $X$ denotes an $s_N$-dimensional random variable whose distribution is strongly log-concave with density
    $$
    \rho_N(x) \propto \exp\left\{-\frac{N}{s_N} \left( \varphi_N\left(\overrightarrow m^* + \sqrt{\frac{s_N}{N}}x \right) - \varphi_N( \overrightarrow m^*)\right) \right\} \1_{\left[-\sqrt{\frac{N}{s_N}}r, \sqrt{\frac{N}{s_N}}r \right]^{s_N}}(x)
    $$
\end{lemma}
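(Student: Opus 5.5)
The identity is the change of variables already carried out in \eqref{transformation1}. The substantive part is strong log-concavity, which reduces to a uniform lower bound on $\Hess\varphi_N$ on the cube $[m^*-r,m^*+r]^{s_N}$.

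\textbf{Step 1: the identity.} By \eqref{transformation1}, $J_N(t)/J_N(0)$ is a ratio of two integrals over $[-\sqrt{N/s_N}\,r,\sqrt{N/s_N}\,r]^{s_N}$. They carry the same weight $\exp\{-\frac{N}{s_N}(\varphi_N(\overrightarrow m^*+\sqrt{s_N/N}\,y)-\varphi_N(\overrightarrow m^*))\}$, and the numerator has the extra factor $e^{t^Ty}$. Normalising this weight gives exactly $\rho_N$, so the ratio equals $\E_{\rho_N}[e^{t^TX}]$. All integrals are finite because the domain is compact.

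\textbf{Step 2: log-concavity.} Write $\rho_N=e^{-U_N}$ on the cube, with $U_N(x)=\frac{N}{s_N}\varphi_N(\overrightarrow m^*+\sqrt{s_N/N}\,x)+\text{const}$. Then
\[
\Hess U_N(x)=\Hess\varphi_N\bigl(\overrightarrow m^*+\sqrt{s_N/N}\,x\bigr).
\]
The indicator of a convex set preserves (strong) log-concavity. It therefore suffices to show $\Hess\varphi_N(y)\succeq c\,I$ for all $y\in[m^*-r,m^*+r]^{s_N}$, with $c>0$ independent of $N$.

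\textbf{Step 3: the Hessian at the minimiser.} By \eqref{Hess}, $\Hess\varphi_N(y)=A-AD(y)A$ with $D(y)=\diag(\sech^2((Ay)_k))$. At $y=\overrightarrow m^*$ every $(Ay)_k$ equals $(\beta+2\alpha)m^*$. Since $\tanh((\beta+2\alpha)m^*)=m^*$, we get $\sech^2=1-(m^*)^2$, so $D=(1-(m^*)^2)I$ and
\[
\Hess\varphi_N(\overrightarrow m^*)=A-(1-(m^*)^2)A^2 .
\]
This matrix is circulant and commutes with $A$. Its eigenvalues are $\lambda-(1-(m^*)^2)\lambda^2$ for $\lambda\in[\beta-2\alpha,\beta+2\alpha]$. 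They are positive iff $\lambda(1-(m^*)^2)<1$, and it suffices to check this at $\lambda=\beta+2\alpha$, i.e. $(\beta+2\alpha)(1-(m^*)^2)<1$. That is the standard Curie–Weiss fact that the slope of $x\mapsto\tanh((\beta+2\alpha)x)$ at the nonzero fixed point is below $1$. The quadratic $\lambda\mapsto\lambda-(1-(m^*)^2)\lambda^2$ is concave and positive at both endpoints $\beta\pm 2\alpha$ (the left one because $\beta-2\alpha>0$). Hence the smallest eigenvalue is bounded below by some $c_0>0$ depending only on $\alpha,\beta$, uniformly in $s_N$.

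\textbf{Step 4: perturbation in operator norm.} This is the main obstacle, since we need uniformity in the growing dimension. For $y$ in the cube, $\|y-\overrightarrow m^*\|_\infty\le r$. Because $A$ has row sums $\beta+2\alpha$ with nonnegative entries, $|(Ay)_k-(\beta+2\alpha)m^*|\le(\beta+2\alpha)r$. As $\sech^2$ is $1$-Lipschitz (its derivative is bounded by $1$), the diagonal matrix $D(y)-(1-(m^*)^2)I$ has operator norm at most $(\beta+2\alpha)r$. Therefore
\[
\bigl\|\Hess\varphi_N(y)-\Hess\varphi_N(\overrightarrow m^*)\bigr\|\le\|A\|^2(\beta+2\alpha)r=(\beta+2\alpha)^3r .
\]
The key point is that the sup-norm control in each coordinate passes to operator norm precisely because the perturbation is diagonal. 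Choosing $r<c_0/(2(\beta+2\alpha)^3)$ gives $\Hess U_N\succeq\frac{c_0}{2}I$ on the cube for every $N$. This yields strong log-concavity with a constant independent of $N$, which is what makes the later sub-Gaussian arguments of Section \ref{subsec:integration} transferable.
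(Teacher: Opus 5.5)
Your proof is correct and follows the same route as the paper. You compute $\Hess U=\Hess\varphi_N(\overrightarrow m^*+\sqrt{s_N/N}\,x)=A-AD(y)A$, use $\sech^2((\beta+2\alpha)m^*)=1-(m^*)^2$ to get $A-(1-(m^*)^2)A^2\succ 0$ at the centre, and then control the diagonal $\sech^2$-matrix on the cube by an $O(r)$ Taylor/Lipschitz bound.

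The differences are minor. The paper uses the one-sided Loewner bound $D(y)\preceq(1-(m^*)^2+2(\beta+2\alpha)r)I$, whereas you use a two-sided operator-norm perturbation together with Weyl's inequality. You also justify the $s_N$-uniform positivity of $A-(1-(m^*)^2)A^2$ via $(\beta+2\alpha)(1-(m^*)^2)<1$, which the paper only asserts.
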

\begin{proof} Similar to the proof of Lemma \ref{lem:log-concave}, set $$
U(x)=\begin{cases}
    \frac{N}{s_N} \left( \varphi_N\left(\overrightarrow m^* + \sqrt{\frac{s_N}{N}}x \right) - \varphi_N( \overrightarrow m^*)\right),\quad & \text{if } x\in \left[-\sqrt{\frac{N}{s_N}}r, \sqrt{\frac{N}{s_N}}r \right]^{s_N}\\
    +\infty, &\text{else.}
\end{cases}$$ We will show that $\Hess U(x)$ is strictly positive definite on the convex set $ \left[-\sqrt{\frac{N}{s_N}}r, \sqrt{\frac{N}{s_N}}r \right]^{s_N}$, which implies the claim. The Hessian of $\varphi_N$ in \eqref{Hess} implies
    \begin{flalign}
        \Hess U(x)
        = A-  A\sum_{k=1}^{s_N} \sech^2\left(  \left( \overrightarrow m^* + \sqrt{\frac{s_N}{N}}x\right)^TAe_k\right)e_ke_k^TA. \label{eq:HessU}
    \end{flalign}
Recall that for $x=0$ we would get
    \begin{align}\label{eq:sechtanh}
    \sech^2\left((\overrightarrow m^*)^TAe_k \right) = \sech^2((\beta +2\alpha) m^*) =1-\tanh^2((\beta +2\alpha) m^*) = 1-(m^*)^2.
    \end{align}
    Since $A-A^2(1-(m^*)^2)$ is positive definite, there exists a small constant $r(\beta,\alpha)>0$, depending on $\beta$ and $\alpha$ such that $A-A^2(1-(m^*)^2+ r(\beta, \alpha))$ is still positive definite. For $x \in\left[-\sqrt{\frac{N}{s_N}}r, \sqrt{\frac{N}{s_N}}r \right]^{s_N}$, we have that $\sqrt{\frac{s_N}{N}}x = y$ for some $y \in [-r,r]^{s_N}$ and therefore by Taylor's theorem for an intermediate value $\xi \in [-r,r]^{s_N}$, we obtain
    \begin{flalign*}
        &\sech^2\left( \left( \overrightarrow m^* + \sqrt{\frac{s_N}{N}}x\right)^TAe_k\right) \\        =&
        \sech^2\left( (\overrightarrow m^* + y)^T Ae_k \right)\\
        =& \sech^2((\overrightarrow m^*)^TAe_k) -2 (\beta y_k + \alpha(y_{k-1} + y_{k+1}))\sech^2(\xi^TAe_k)\tanh(\xi^TAe_k).
    \end{flalign*}
    It follows that for any such $x$, we have $\sech^2\left( \left( \overrightarrow m^* + \sqrt{\frac{s_N}{N}}x\right)^TAe_k\right) \le 1-(m^*)^2+2r(\beta + 2\alpha) $ implying that $\Hess U(x)$ is component-wise larger than $A-A^2(1-(m^*)^2+r)$ which is positive definite for $2(\beta+  2\alpha)r < r(\beta, \alpha)$.
\end{proof}
Let us point out that the denominator never vanishes with the following bound, which we shall need later.
 \begin{cor}\label{cor:J(0)not0}
There exists $\varepsilon>0$ such that $J_N(0)>\varepsilon^{s_N}>0$.
 \end{cor}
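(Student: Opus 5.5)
The plan is to undo the normalisation in $J_N(0)$ and then bound the resulting integrand from below by an explicit product Gaussian. Setting $t=0$ in \eqref{eq:J} and substituting $y=\sqrt{\frac{N}{s_N}}(x-\overrightarrow m^*)$, as in \eqref{transformation1}, the prefactor $(N/s_N)^{s_N/2}$ is exactly cancelled by the Jacobian. This gives
\begin{align*}
J_N(0)=\int_{[-R_N,R_N]^{s_N}} e^{-U(y)}\,\d^{s_N}y,\qquad R_N:=\sqrt{\tfrac{N}{s_N}}\,r,
\end{align*}
with $U$ as in the proof of Lemma \ref{lem:log-concaveLT}. So it suffices to bound $U$ from above by a quadratic form that is diagonal in the coordinates.

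For this I would use a second order Taylor expansion of $U$ around $0$. First, $U(0)=0$ by definition. Second, $\nabla U(0)=\sqrt{N/s_N}\,\nabla\varphi_N(\overrightarrow m^*)=0$, since $\overrightarrow m^*$ is a minimiser (Lemma \ref{LowTemp}) and hence a critical point. For the Hessian I use formula \eqref{eq:HessU}:
\begin{align*}
\Hess U(y)=A-A D(y) A,
\end{align*}
where $D(y)$ is diagonal with entries $\sech^2(\cdot)\in[0,1]$. Thus $A D(y) A$ is positive semidefinite, so in Loewner order $\Hess U(y)\le A\le \|A\|_2 I=(\beta+2\alpha)I$ for \emph{every} $y$.

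The cube is convex, so Taylor's theorem with the integral remainder along the segment from $0$ to $y$ yields
\begin{align*}
U(y)\le \frac{\beta+2\alpha}{2}\|y\|^2 \qquad\text{for all } y\in[-R_N,R_N]^{s_N}.
\end{align*}
This bound factorises over the coordinates, which is the point. It gives
\begin{align*}
J_N(0)\ge \Big(\int_{-R_N}^{R_N} e^{-\frac{\beta+2\alpha}{2}u^2}\,\d u\Big)^{s_N}\ge \Big(\int_{-r}^{r} e^{-\frac{\beta+2\alpha}{2}u^2}\,\d u\Big)^{s_N},
\end{align*}
where the last step uses $R_N\ge r$ because $N/s_N\ge 1$.

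Finally I would set $\varepsilon:=\frac12\int_{-r}^{r} e^{-\frac{\beta+2\alpha}{2}u^2}\,\d u$. This is strictly positive and independent of $N$, and it gives $J_N(0)\ge (2\varepsilon)^{s_N}>\varepsilon^{s_N}>0$.

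The only delicate point I expect is securing an upper bound on $U$ that is uniform in the growing dimension and of product form; the global Loewner bound $\Hess U\le A$, which needs no smallness of $r$, handles this. All constants then depend only on $\alpha,\beta,r$, which is why the lower bound takes the form $\varepsilon^{s_N}$ and not something worse.
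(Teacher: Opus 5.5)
Your proposal is correct and follows essentially the same route as the paper: undo the scaling so that $J_N(0)=\int_{[-R_N,R_N]^{s_N}}e^{-U(y)}\,\d^{s_N}y$, Taylor-expand $U$ to second order at $0$ using $U(0)=0$ and $\nabla U(0)=0$, bound the Hessian uniformly from above, and integrate the resulting product Gaussian over the box. The differences are minor. Your Loewner bound $\Hess U\preceq A\preceq(\beta+2\alpha)I$ is sharper than the paper's operator-norm bound $\lambda_0=(\beta+2\alpha)(1+\beta+2\alpha)$. Your use of $R_N\ge r$ also yields a cleanly $N$-independent $\varepsilon$, whereas the paper states its final constant somewhat loosely.
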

 \begin{proof}
Note that \eqref{eq:HessU} also gives a crude uniform upper bound $$\Big\lVert\Hess \frac{N}{s_N} \varphi_N\left(\overrightarrow m^* + \sqrt{\frac{s_N}{N}}x \right)\Big\rVert\le \lambda_{\max}(A)+\lambda_{\max}(A)^2=(\beta+2\alpha)(1+\beta+2\alpha)=:\lambda_0 .$$ 
A Taylor expansion of $\frac{N}{s_N}\varphi_N\left( \overrightarrow m^*+\sqrt{\frac{s_N}{N}}x\right)$ for $\left\vert\sqrt{\frac{s_N}{N}}x \right\vert \le r$ yields
\begin{flalign} \label{eq:upperBoundHess}
    \frac{N}{s_N}\left(\varphi_N\left( \overrightarrow m^*+\sqrt{\frac{s_N}{N}}x\right) -\varphi_N(\overrightarrow m^*) \right) = \frac{1}{2}x^T \Hess \varphi_N\left(\overrightarrow m^* + \sqrt{\frac{s_N}{N}\xi_x } \right) x 
    \le \frac  {\lambda_0} 2  \|x\|^2
\end{flalign}
for an intermediate value $\xi_x \in \left[ m^*,  m^* +\sqrt{\frac{N}{s_N}}x \right]^{s_N}$.
Recall the definition \eqref{eq:J} of $J_N(0)$, then applying a change of variables as in \eqref{transformation1} together with \eqref{eq:upperBoundHess} yields
\begin{flalign*}
    J_N(0) &= \int_{\left[-\sqrt{\frac{N}{s_N}}r, \sqrt{\frac{N}{s_N}}r \right]^{s_N}} \exp \left\{- \frac{N}{s_N}\left(\varphi_N\left( \overrightarrow m^*+\sqrt{\frac{s_N}{N}}x\right) -\varphi_N(\overrightarrow m^*) \right) \right\} \d^{s_N}x \\
   &\ge \int_{\left[-\sqrt{\frac{N}{s_N}}r, \sqrt{\frac{N}{s_N}}r \right]^{s_N}} \exp \left\{- \frac{\lambda_0}{2} \|x\|^2 \right\} \d^{s_N}x\\
   &=\Big(\sqrt{2\pi\lambda_0^{-1}}\mathcal N (0,\lambda_0^{-1})\big([-\sqrt{\tfrac N {s_N}},\sqrt{\tfrac N {s_N}}]\big)\Big)^{s_N}\ge (\pi/\lambda_0) ^{s_N}
\end{flalign*}
as claimed.
 \end{proof}

To shorten notation, define the $s_N \times s_N$-dimensional matrix 
\begin{align}\label{eq:Cstern}
(\Cstern)^{-1} = \Hess \varphi_N (\overrightarrow m^*) = A-A^2(1- (m^*)^2)
\end{align}
and let us introduce the function 
$$
G(x) := \frac{x^T((\Cstern )^{-1}- A)x}{2} - (\overrightarrow m^*)^TAx + \sum_{k=1}^{s_N} \log\cosh\left( (\overrightarrow m^* + x)^T Ae_k \right) - \log\cosh\left(  (\overrightarrow m^*)^TAe_k\right),
$$
which in contrast to \eqref{eq:G} is defined as a function of all coordinates $G:\R^{s_N}\to \R$. Then, we obtain 
$$
\frac{N}{s_N}G\left(\sqrt{\frac{s_N}{N}}y \right) = \frac{y^T(\Cstern )^{-1}y}{2} - \frac{N}{s_N}\left( \varphi_N \left(\overrightarrow m^* + \sqrt{\frac{s_N}{N}}y \right) - \varphi_N\left(\overrightarrow m^* \right)\right)
$$
and therefore \eqref{transformation1} implies
\begin{flalign*}
\frac{J_N(t)}{J_N(0)} =&\frac{\int_{\left[-\sqrt{\frac{N}{s_N}}r, \sqrt{\frac{N}{s_N}}r \right]^{s_N}} \exp\left\{- \frac{1}{2}y^T(\Cstern)^{-1}y + \frac{N}{s_N}G\left(\sqrt{\frac{s_N}{N}}y \right) + t^Ty \right\} \d^{s_N}y }{\int_{\left[-\sqrt{\frac{N}{s_N}}r, \sqrt{\frac{N}{s_N}}r \right]^{s_N}} \exp\left\{- \frac{1}{2}y^T(\Cstern)^{-1}y + \frac{N}{s_N}G\left(\sqrt{\frac{s_N}{N}}y \right) \right\} \d^{s_N}y} \\
=&
\frac{\int_{(\Cstern)^{-\frac{1}{2}} \left[-\sqrt{\frac{N}{s_N}}r, \sqrt{\frac{N}{s_N}}r \right]^{s_N}} \exp\left\{- \frac{1}{2}\| x\|^2+ \frac{N}{s_N}G\left(\sqrt{\frac{s_N}{N}}(\Cstern)^{\frac{1}{2}}x \right) + t^T(\Cstern)^{\frac{1}{2}}x \right\} \d^{s_N}x }{\int_{ (\Cstern)^{-\frac{1}{2}} \left[-\sqrt{\frac{N}{s_N}}r, \sqrt{\frac{N}{s_N}}r \right]^{s_N}} \exp\left\{- \frac{1}{2}\|x\|^2+ \frac{N}{s_N}G\left(\sqrt{\frac{s_N}{N}}(\Cstern)^{\frac{1}{2}}x \right) \right\} \d^{s_N}x},
\end{flalign*}
where in the second step, we did another change of variables $x=(\Cstern)^{-\frac{1}{2}}y$. Note that the multiplication with the positive definite matrix $(\Cstern)^{-\frac{1}{2}}$ preserves the statement of Lemma \ref{lem:log-concaveLT} for the transformed density
$$
\widetilde{\rho}_N(x) \propto \exp\left\{ - \frac{1}{2}\| x\|^2+ \frac{N}{s_N}G\left(\sqrt{\frac{s_N}{N}}(\Cstern)^{\frac{1}{2}}x \right)\right\} \1_{ (\Cstern)^{-\frac{1}{2}}\left[-\sqrt{\frac{N}{s_N}}r, \sqrt{\frac{N}{s_N}}r \right]^{s_N}}(x).
$$
As in the high temperature case, the next step is to restrict the area of integration in the first few coordinates, as in Lemma \ref{lemma:D_N-restriction}.
\begin{lemma}\label{lemma:D_N-restriction_LT}
    Let $d_N = \lceil c_9 \log N \rceil$ for some constant $c_9 >0$ (to be defined later) and  define 
    $$
    D_N \coloneqq
        (\Cstern )^{-\frac{1}{2}}\left[ - \sqrt{\frac{N}{s_N}}r, \sqrt{\frac{N}{s_N}}r \right]^{s_N}\cap B_{2\sqrt{2d_N}}^{(2d_N)} 
    $$ Then there exists a universal constant $c_{10} >0$, such that as $N\to\infty$
    $$
    \E_{\widetilde{\rho}_N}\left[ \1_{D_N^c}(X)\exp\left\{ t^T(\Cstern )^{\frac{1}{2}}X\right\} \right] = \mathcal{O}\left( \exp\left\{ - c_{10} d_N \right\}\right) \to 0.
    $$ 
\end{lemma}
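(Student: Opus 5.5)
The plan is to mimic the proof of Lemma \ref{lemma:D_N-restriction}, now applied to the density $\widetilde\rho_N$. By Lemma \ref{lem:log-concaveLT}, together with the linear change of variables $x=(\Cstern)^{-1/2}y$, the measure $\widetilde\rho_N$ has the form $e^{-\widetilde U}$ on the convex set $K_N:=(\Cstern)^{-1/2}[-\sqrt{N/s_N}\,r,\sqrt{N/s_N}\,r]^{s_N}$. Its Hessian is $\Hess\widetilde U=(\Cstern)^{1/2}\Hess U\,(\Cstern)^{1/2}$. The proof of Lemma \ref{lem:log-concaveLT} gives $\Hess U\succeq A-A^2(1-(m^*)^2+r')$ with $r'$ small, and this matrix is uniformly comparable to $(\Cstern)^{-1}$: the circulant eigenvalues of both are bounded below uniformly in $N$. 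Hence $\Hess\widetilde U\succeq c_7 I$ for some $c_7>0$ independent of $N$. The indicator of the convex set $K_N$ only adds $+\infty$ to the potential, so strong log-concavity survives the restriction. This lets us invoke \cite[Theorem 5.2.11]{Vershynin}: every $1$-Lipschitz $f$ satisfies $\|f(X)-\E f(X)\|_{\psi_2}\le c_2/\sqrt{c_7}$.

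Step one is the projected norm $f(x)=\|P_{\mathcal I_{d_N}}x\|$, which is $1$-Lipschitz. To bound its mean we use the Poincar\'e/Brascamp--Lieb inequality for strongly log-concave measures on convex domains, which gives $\Cov(X)\preceq c_7^{-1}I$. Then $\E\|P_{\mathcal I_{d_N}}X\|^2\le\sum_{j\in\mathcal I_{d_N}}\E X_j^2$. Here a new difficulty appears compared to the high temperature case: $\E X\neq 0$, because the symmetry $x\mapsto -x$ is broken by the expansion around $\overrightarrow m^*$ (odd terms of $G$, and the box is symmetric only before the map $(\Cstern)^{-1/2}$; actually the box image is symmetric, but the density is not). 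So we must also control the mean, $\E X_j^2=\Var X_j+(\E X_j)^2$. For the mean I would argue $\|\E X\|_\infty=O(1)$, or at least $\sum_{j\in\mathcal I_{d_N}}(\E X_j)^2\le c\,d_N$. A possible route: by circulant symmetry of the whole setup (both $\widetilde\rho_N$ and $K_N$ are invariant under cyclic shifts), $\E X=\mu\overrightarrow{1}$ for a scalar $\mu$. Then $|\mu|\sqrt{s_N}=\|\E X\|$, and this is bounded by comparing with the mode: the minimizer of $\widetilde U$ is $0$, and for strongly log-concave measures $\|\E X-\text{mode}\|\lesssim\sqrt{s_N/c_7}$. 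This gives $|\mu|=O(1)$, and hence $\E f(X)\le C\sqrt{2d_N}$.

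I expect this mean-control step to be the main obstacle. If the resulting constant exceeds the one permitted by the radius $2\sqrt{2d_N}$, I would either enlarge the radius constant (the subsequent low-temperature analogue of Lemma \ref{lem:G'=o} only needs $O(\sqrt{d_N})$) or prove $\mu\to0$ directly via a first-order expansion of $G$. With the mean in hand, the subgaussian tail gives $\P(\|P_{\mathcal I_{d_N}}X\|>2\sqrt{2d_N})\le\exp\{-c\,d_N\}$ exactly as in \eqref{eq:Lip1}.

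Step two bounds the exponential moment. The function $x\mapsto t^T(\Cstern)^{1/2}x$ is Lipschitz with constant $\|(\Cstern)^{1/2}t\|$, which is uniformly bounded since $t$ has finitely many nonzero entries and $\|\Cstern\|$ is bounded in $N$. Subgaussianity then gives $\E e^{2t^T(\Cstern)^{1/2}X}\le\exp\{2t^T(\Cstern)^{1/2}\E X+c_3\|(\Cstern)^{1/2}t\|^2\}$. The mean term equals $2\mu\,t^T(\Cstern)^{1/2}\overrightarrow 1$, which is $O(1)$ because $(\Cstern)^{1/2}\overrightarrow 1=\lambda\overrightarrow 1$ with $\lambda$ bounded. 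Finally, since $X\in K_N$ almost surely, $D_N^c$ reduces to $\{\|P_{\mathcal I_{d_N}}X\|>2\sqrt{2d_N}\}$. Cauchy--Schwarz then combines the two steps into the bound $O(e^{-c_{10}d_N})$.
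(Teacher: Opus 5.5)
Your proposal is correct. It follows the same skeleton as the paper, which omits the proof and only says that it goes exactly as in Lemma \ref{lemma:D_N-restriction} once the strong log-concavity of Lemma \ref{lem:log-concaveLT} is available.

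What you add is a point the paper glosses over. The density $\widetilde\rho_N$ is not invariant under $x\mapsto -x$, since in general $\varphi_N(\overrightarrow m^*+z)\neq\varphi_N(\overrightarrow m^*-z)$. The high-temperature proof used $\E X=0$ in two places: in $\sum_j\E X_j^2=\sum_j\Cov(X)_{jj}$, and in $\E[\tilde t^TX]=0$ just before \eqref{eq:Lip2}. Neither transfers verbatim. Your cyclic-shift invariance, giving $\E X=\mu\overrightarrow 1$, together with the mode comparison $\E\|X\|^2\le s_N/c_7$ supplies exactly what is missing. The mode comparison is valid on the convex domain $K_N\ni 0$, because the boundary term from integrating $\nabla\cdot(x\,e^{-\widetilde U})$ by parts is nonnegative. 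So your version gives a complete argument where the paper gives brevity.

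Your hedge about the radius can be dropped. Shift invariance gives directly $\E X_j^2=\E\|X\|^2/s_N\le 1/c_7$, with no need to split into mean and variance. Moreover, the eigenvalues of $(\Cstern)^{1/2}\big(A-A^2(1-(m^*)^2+r')\big)(\Cstern)^{1/2}$ are $1-\frac{r'\lambda}{1-\lambda(1-(m^*)^2)}$ with $\lambda\in[\beta-2\alpha,\beta+2\alpha]$. Hence your convexity constant $c_7$ tends to $1$ as $r\to0$. For small $r$ this gives $\E\|P_{\mathcal I_{d_N}}X\|\le\sqrt{2d_N/c_7}<2\sqrt{2d_N}$, so the lemma holds with the stated radius and you do not need $\mu\to0$. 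The resulting $c_{10}$ then depends on $c_7$ rather than being universal.
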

Having strong log-concavity of Lemma \ref{lem:log-concaveLT}, the statement of the lemma follows exactly as in Lemma \ref{lemma:D_N-restriction} (again the case $2d_N >s_N$ being the simpler), hence we omit the proof.
Finally, we can do another change of variables $x \mapsto x+ (\Cstern )^{\frac{1}{2}} t \in \widetilde{D}_N$ for $\widetilde{D}_N = D_N+ (\Cstern )^{\frac{1}{2}} t  $, then the integral becomes
\begin{align}
    &\lim_{N\to \infty} \frac{J_N(t)}{J_N(0)}\nonumber \\
    =& \lim_{N\to \infty}\exp\left\{ \frac{1}{2}t^T\Cstern t\right\}\frac{\int_{\widetilde{D}_N} \exp\left\{ -\frac{1}{2} \|x\|^2 + \frac{N}{s_N} G\left(  \sqrt{\frac{s_N}{N}}(\Cstern)^{\frac{1}{2}}x+ \sqrt{\frac{s_N}{N}}\Cstern t\right) \right\} \d^{s_N}x }{\int_{\widetilde{D}_N} \exp\left\{ -\frac{1}{2} \|x\|^2 + \frac{N}{s_N} G\left(  \sqrt{\frac{s_N}{N}}(\Cstern )^{\frac{1}{2}}x \right) \right\} \d^{s_N}x}.\label{eq:J(t)/J(0)}
\end{align}
Again we will show that the quotient on the right hand side converges to 1. As in the high temperature case, the numerator and denominator differ only by the term $\sqrt{\frac{s_N}{N}}\Cstern t$ inside the function $G$ and we will show that it can be neglected in the limit. Since $t$ has only finitely many non-zero coordinates, clearly the term goes to zero as $N \to \infty$ and we do the Taylor expansion
$$
\frac{N}{s_N}G\left(\sqrt{\frac{s_N}{N}}(\Cstern )^{\frac{1}{2}}x + \sqrt{\frac{s_N}{N}}\Cstern t \right) = \frac{N}{s_N}G\left( \sqrt{\frac{s_N}{N}}(\Cstern )^{\frac{1}{2}}x\right) + \frac{N}{s_N} \nabla G(\xi)^T \sqrt{\frac{S_N}{N}}\Cstern t ,
$$
where $\xi= \sqrt{\frac{s_N}{N}}(\Cstern )^{\frac{1}{2}}x + \lambda \sqrt{\frac{s_N}{N}}\Cstern t $, for some $\lambda \in [0,1]$, is an intermediate value. We need to show that
$$
 \sqrt{\frac{N}{s_N}} \nabla G(\xi)^T\Cstern t=\sqrt{\frac{N}{s_N}} \sum_{k=1}^{s_N}(\nabla G(\xi))_k (\Cstern t)_k
$$
is small.
\begin{lemma}\label{lem:G'=o_LT}
    If $s_N = o\left( \frac{N}{(\log N)^4}\right)$ and $d_N = c_{9}\log N$ for some constant $c_{9}>0$, then
    $$
    \sqrt{\frac{N}{s_N}} \nabla G(\xi)^T\Cstern t
    = o(1)
    $$
    uniformly in $x \in (\Cstern )^{-\frac{1}{2}}\left[ -\sqrt{\frac{N}{s_N}}2r, \sqrt{\frac{N}{s_N}}2r\right]^{s_N} \cap B_{4\sqrt{d_N}}^{(2d_N)}$.
\end{lemma}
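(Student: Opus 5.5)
Compute $\nabla G$ explicitly and show it vanishes to second order at the origin, coordinate by coordinate. From the definition of $G$,
\[
\nabla G(x) = \big((\Cstern)^{-1}-A\big)x - A\overrightarrow m^* + A\,\tanh\!\big(A(\overrightarrow m^*+x)\big),
\]
with $\tanh$ applied componentwise. Since $\tanh((\beta+2\alpha)m^*)=m^*$, we have $A\tanh(A\overrightarrow m^*)=A\overrightarrow m^*$. Using $(\Cstern)^{-1}-A=-A^2(1-(m^*)^2)$, this becomes
\[
\nabla G(x)=A\,g(Ax), \qquad g_k(u)=\tanh\big((\beta+2\alpha)m^*+u_k\big)-m^*-(1-(m^*)^2)u_k .
\]
The function $g_k$ vanishes together with its first derivative at $u_k=0$, so $|g_k(u)|\le c\,u_k^2$ for $|u_k|$ bounded (e.g. $\le 1$). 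The quantity to bound is
\[
\sqrt{N/s_N}\,\sum_k g_k(A\xi)\,(A\Cstern t)_k .
\]
Here $A$ is tridiagonal circulant, so the structure matches the high-temperature case: it is a localized vector $v:=A\Cstern t$ paired against the quadratically small $g(A\xi)$. The difference is that $g$ is only quadratic (not cubic, as $G'$ was) because of the nonzero $\tanh''$ at $m^*$. This is why the hypothesis is strengthened to $s_N=o(N/(\log N)^4)$.

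The key steps follow the proof of Lemma~\ref{lem:G'=o}.

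\textbf{Step 1 (decay).} I need exponential off-diagonal decay of $\Cstern=(A-A^2(1-(m^*)^2))^{-1}$ and of $(\Cstern)^{1/2}$. The first is a circulant, pentadiagonal, positive definite matrix, so the Fourier argument of Lemma~\ref{lem:matrix_decay} applies; failing explicit formulas, I would invoke \cite[Thm.~3.4]{BeRa07} for both. Hence $|v_k|\le c\,\kappa^{\mathrm{dist}(k,\{1,\dots,l\})}$ with $\kappa<1$. Let $\widetilde d_N=\frac{5}{-\log\kappa}\log N$; then $|v_k|=\mathcal O(N^{-5})$ for $k\notin\mathcal I_{\widetilde d_N}$.

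\textbf{Step 2 (far indices).} For the far indices I use only a crude bound. On the integration domain, $\sqrt{s_N/N}(\Cstern)^{1/2}x$ lies in $[-2r,2r]^{s_N}$, and the shift $\lambda\sqrt{s_N/N}\,\Cstern t$ is $o(1)$. Hence $|(A\xi)_k|\le 2(\beta+2\alpha)r+o(1)$ is uniformly bounded, and so is $g_k$. The far contribution is therefore $\mathcal O(\sqrt{N/s_N}\,s_N N^{-5})=o(1)$.

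\textbf{Step 3 (near indices).} For $k\in\mathcal I_{\widetilde d_N}$ I write
\[
(A(\Cstern)^{1/2}x)_k=\sum_{j\in\mathcal I_{d_N}}+\sum_{j\notin\mathcal I_{d_N}},
\qquad d_N=c_9\log N,
\]
with $c_9$ chosen so that $d_N-\widetilde d_N$ exceeds $5\log N/(-\log\kappa_3)$, where $\kappa_3$ is the decay rate of $A(\Cstern)^{1/2}$. The near part is at most $\|A(\Cstern)^{1/2}\|\cdot 4\sqrt{2d_N}$, using the cylinder constraint. The far part is $\mathcal O(N^{-5})\sum_j|x_j|$.

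Bounding $\sum_j|x_j|$ is where I expect the main obstacle, since the domain is no longer a Euclidean ball. I would use $\|x\|\le\|(\Cstern)^{-1/2}\|\,2r\sqrt{N}$, which follows from the cube constraint. This gives $\sum_j|x_j|\le\sqrt{s_N}\,\|x\|=\mathcal O(\sqrt{s_N N})$, still killed by $N^{-5}$. Consequently $|(A\xi)_k|=\mathcal O(\sqrt{s_N\log N/N})$, and therefore $|g_k(A\xi)|=\mathcal O(s_N\log N/N)$.

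\textbf{Step 4 (sum).} Summing over the $\mathcal O(\log N)$ near indices with $|v_k|$ bounded gives
\[
\sqrt{N/s_N}\cdot\log N\cdot\frac{s_N\log N}{N}=\sqrt{\frac{s_N(\log N)^4}{N}}=o(1).
\]
This is exactly the hypothesis $s_N=o(N/(\log N)^4)$. The case $s_N=\mathcal O(\log N)$ is handled as before, since then all indices are near and the same bound applies. All bounds are uniform in $x$ over the stated set, completing the plan.
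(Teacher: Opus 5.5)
Your proposal is correct and follows essentially the same route as the paper. You split the indices into near and far using the exponential decay of $\Cstern$ and $(\Cstern)^{1/2}$, use the cylinder constraint to make $\xi$ of order $\sqrt{s_N\log N/N}$ near the ends, and exploit the quadratic vanishing of $\nabla G$ at the origin; your identity $\nabla G(\xi)=A\,g(A\xi)$ is exactly the paper's $\nabla G(\xi)=A\sum_k\tilde\lambda_k(\xi^TAe_k)^2e_k$, and you arrive at the same rate $\sqrt{s_N(\log N)^4/N}$.

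Your bound $\sum_j|x_j|\le\sqrt{s_N}\,\|x\|=\mathcal O(\sqrt{s_N N})$, derived from the cube constraint, is if anything a cleaner justification of the far-$j$ estimate than the one written in the paper.
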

Note that $D_N, \widetilde{D}_N \subset \left(\Cstern \right)^{-\frac{1}{2}} \left[ -\sqrt{\frac{N}{s_N}}2r, \sqrt{\frac{N}{s_N}}2r\right]^{s_N} \cap B_{4\sqrt{d_N}}^{(2d_N)}$ for $N$ large enough. As in the high temperature regime, a key argument in proving the above statement are the decay properties of the entries of the matrices $(\Cstern )^{\frac{1}{2}}$ and $\Cstern$, away from the diagonal and the corners.
\begin{lemma}\label{lem:exp-decay-C*}
    Let $\Cstern = \left( A - A^2(1- (m^*)^2) \right)^{-1}$, then there exist constants $c_{7}, c_{8}>0$ and $0< \kappa_3, \kappa_4 <1$ such that
    \begin{equation}
        \Cstern _{j,k} = c_7\left( \kappa_3^{|j-k|} + \kappa_3^{s_N-|j-k|}\right)
    \end{equation}
    and
    \begin{equation}
        (\Cstern )^{\frac{1}{2}}_{j,k} = c_{8}\left( \kappa_4^{|j-k|} + \kappa_4^{s_N-|j-k|}\right).
    \end{equation} 
\end{lemma}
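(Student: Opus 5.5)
The plan is to mirror the proof of Lemma \ref{lem:matrix_decay}, using only that $\Cstern$ and $(\Cstern)^{1/2}$ are symmetric circulant matrices. Write $q\coloneqq 1-(m^*)^2\in(0,1)$. The matrix $(\Cstern)^{-1}=A-qA^2$ is circulant with eigenvalues
$$
\mu_l=\lambda_l-q\lambda_l^2,\qquad \lambda_l=\beta+2\alpha\cos\Big(\frac{2\pi l}{s_N}\Big),\quad l=0,\dots,s_N-1.
$$
By \eqref{eq:Cstern} and the positive definiteness used in Lemma \ref{lem:log-concaveLT}, these eigenvalues are bounded below uniformly in $N$. Set $\theta=2\pi l/s_N$ and $g(\theta)\coloneqq p(\cos\theta)$, where $p(u)=(\beta+2\alpha u)-q(\beta+2\alpha u)^2$ is a quadratic polynomial that is strictly positive on $[-1,1]$. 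Then, exactly as before,
$$
\Cstern_{j,k}=\frac1{s_N}\sum_{l=0}^{s_N-1}\frac{e^{i\theta(j-k)}}{g(\theta)}.
$$

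For $\Cstern$ I would factor $p$. Since $p(u)=-4q\alpha^2(u-u_1)(u-u_2)$ has no roots in $[-1,1]$, each linear factor can be written as $u-u_i=-\frac{1}{2r_i}(1-2r_i u+r_i^2)$ with $|r_i|<1$, where $r_i$ is the root of $r^2-2u_ir+1=0$ of modulus less than one. The roots $u_i$ may be complex conjugates. If so, the $r_i$ are conjugates and everything stays real at the end. Partial fractions in $u=\cos\theta$ then turn $1/g(\theta)$ into a linear combination of the kernels $(1-2r_i\cos\theta+r_i^2)^{-1}$. Applying \eqref{eq:GeometricSum} and \eqref{eq:DiscretePoissonId}, each kernel gives the entries $\frac{r_i^{m}+r_i^{s_N-m}}{(1-r_i^2)(1-r_i^{s_N})}$ with $m=|j-k|$. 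This yields $\Cstern_{j,k}$ as a sum of at most two such terms. Hence I obtain the bound $|\Cstern_{j,k}|\le c_7(\kappa_3^{|j-k|}+\kappa_3^{s_N-|j-k|})$ with $\kappa_3=\max|r_i|<1$, and $c_7$ independent of $N$ because $1-|r_i|^{s_N}$ is bounded away from $0$. As in \eqref{eq:Cexp}, the ``$=$'' in the statement is to be read as this bound, which is all that Lemma \ref{lem:G'=o_LT} needs.

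For $(\Cstern)^{1/2}$ no closed formula is available, so I would use the same route as for $C$ in Lemma \ref{lem:matrix_decay}. The function $h(\theta)=g(\theta)^{-1/2}$ is real-analytic and $2\pi$-periodic, because $g>0$ on the circle. So $h$ extends holomorphically to a strip $|\mathrm{Im}\,\theta|<\eta$, and its Fourier coefficients satisfy $|\hat h(n)|\le M e^{-\eta' |n|}$ for any $\eta'<\eta$. The entries of the circulant square root are $\frac1{s_N}\sum_l h(2\pi l/s_N)e^{i\theta(j-k)}$. By Poisson summation, as in the computation leading to \eqref{eq:DiscretePoissonId}, this equals $\sum_{q'\in\Z}\hat h(j-k+q's_N)$. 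Summing the geometric tails gives the bound $c_8(\kappa_4^{|j-k|}+\kappa_4^{s_N-|j-k|})$ with $\kappa_4=e^{-\eta'}$. Alternatively one may cite \cite[Theorem 3.4]{BeRa07} as was done for $C$, since $(\Cstern)^{-1}$ is a banded symmetric positive definite circulant matrix whose condition number is bounded uniformly in $N$.

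The main obstacle is uniformity in $N$. I have to check that the analyticity strip and the constants do not depend on $s_N$, and this holds because $g$ itself does not depend on $s_N$; the index $s_N$ enters only through the sampling. I also have to keep $1-\kappa^{s_N}$ away from zero. The remaining steps are routine bookkeeping with the partial fractions.
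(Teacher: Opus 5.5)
Your proposal is correct. For $C^*$ it is what the paper means by ``the same steps'' as Lemma~\ref{lem:matrix_decay}: diagonalize by the discrete Fourier transform and use \eqref{eq:GeometricSum}--\eqref{eq:DiscretePoissonId}. The derivation of those identities works verbatim for any $|r|<1$, which you need since one rate turns out to be negative.

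Your partial fractions are just the identity $C^*=A^{-1}+q(I-qA)^{-1}$, which the paper itself uses later when computing $\Sigma^*$. The roots are always real: $u_1=-\beta/(2\alpha)<-1$ and $u_2=(q^{-1}-\beta)/(2\alpha)>1$. So $r_1=\frac{-\beta+\sqrt{\beta^2-4\alpha^2}}{2\alpha}\in(-1,0)$ is the rate of $A^{-1}$, and $r_2\in(0,1)$ is that of $(I-qA)^{-1}$. The sign-alternating $A^{-1}$-kernel confirms your remark that the lemma's ``$=$'' can only be meant as a bound on $|C^*_{j,k}|$. That bound is all Lemma~\ref{lem:G'=o_LT} uses.

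One input you should state explicitly rather than borrow from Lemma~\ref{lem:log-concaveLT}, where it is only asserted, is the Curie--Weiss inequality $(1-(m^*)^2)(\beta+2\alpha)<1$. It is what puts $u_2$ outside $[-1,1]$ and makes $p$ uniformly positive.

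For $(C^*)^{1/2}$ your route genuinely differs from the paper's, which (as for $C$) simply invokes \cite[Theorem 3.4]{BeRa07}. Your argument has three steps: analytic continuation of $h=g^{-1/2}$ to a strip, exponential decay of $\hat h$, and the aliasing formula $\frac1{s_N}\sum_l h(\theta_l)e^{i\theta_l m}=\sum_{q'}\hat h(m+q's_N)$. It is self-contained, yields the periodic two-term bound directly, and makes uniformity in $N$ evident because $g$ does not depend on $s_N$.

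The citation is shorter. To use it, though, one has to check that it gives a bound in the cyclic distance for a circulant matrix, whose corner entries destroy ordinary bandedness, with constants independent of $N$; the paper does not spell this out. Your Fourier argument would also cover $C^*$ itself, at the price of non-explicit constants.
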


We omit the proof as it follows the same steps of Lemma \ref{lem:matrix_decay} for the inverse, and square-root respectively, of the circulant banded matrix $ A - A^2(1- (m^*)^2)$.

\begin{proof}[Proof of Lemma \ref{lem:G'=o_LT}]
    As in the proof of Lemma \ref{lem:G'=o}, we will first consider the case $s_N \gg \log N$ and afterwards deduce the case $s_N = \mathcal{O}(\log N)$ from the existing computations.
    Recall that $t$ has only finitely many non-zero components and let $l$ be the largest index of a non-zero component. Define $\widetilde{d}_N = -\frac{5}{\log \kappa_3}\log N$, such that $\kappa_3^{\widetilde{d}_N} = N^{-5}$. Again, define $\mathcal{I}_{\widetilde{d}_N} = \{1, \ldots, \widetilde{d}_N\}\cup \{s_N-\widetilde{d}_N+1, \ldots, s_N\}$ be the indices near the endpoints $1,s_N$ and let us first consider $k \notin \mathcal{I}_{\widetilde{d}_N}$ in order to show that their contribution is negligible. For $k=\widetilde{d}_N+1, \ldots, s_N-\widetilde{d}_N$, we have by Lemma \ref{lem:exp-decay-C*}
    \begin{flalign*}
        |(\Cstern t)_k| &\le \sum_{j=1}^{s_N} |\Cstern_{k,j}t_j|= \sum_{j=1}^l |\Cstern_{k,j}t_j| \\
        &= c_7 \sum_{j=1}^l \left( \kappa_3^{|j-k|} + \kappa_3^{s_N - |j-k|} \right)|t_j| \le c_7 2\kappa_3^{\widetilde{d}_N}\sum_{j=1}^l |t_j| = \mathcal{O}(N^{-5})
    \end{flalign*}
 uniformly in $k\notin \mathcal{I}_{\widetilde{d}_N}$, where the last step follows from the definition of $\widetilde{d}_N$. Next, we need to find a suitable bound for $(\nabla G(\xi))_k$. As $\xi =  \sqrt{\frac{s_N}{N}}(\Cstern)^{\frac{1}{2}}x + \lambda \sqrt{\frac{s_N}{N}}\Cstern t$ for some $\lambda \in [0,1]$ is the intermediate value from Taylor's theorem, we know that
 $$
 \xi_k = \sqrt{\frac{s_N}{N}}\left((\Cstern)^{\frac{1}{2}}x\right)_k + \lambda \sqrt{\frac{s_N}{N}}(\Cstern t)_k = \sqrt{\frac{s_N}{N}}\left((\Cstern)^{\frac{1}{2}}x\right)_k + \mathcal{O}(N^{-5}),
 $$
for $k\notin \mathcal{I}_{\widetilde{d}_N}$. Recall that $x$ is restricted to the set $(\Cstern)^{-\frac{1}{2}} \left[ - \sqrt{\frac{N}{s_N}}2r, \sqrt{\frac{N}{s_N}}2r \right]^{s_N}$, hence
$$
|\xi_k| \le 2r + \mathcal{O}(N^{-5}) = \mathcal{O}(1).
$$
Note that 
\begin{flalign*}
   |\left(\nabla G(\xi)\right)_k | &= \Bigg|\left( \left((\Cstern )^{-1} - A\right)\xi\right)_k - m^*(\beta + 2\alpha) + \left(A\sum_{j=1}^{s_N}\tanh\left( (\overrightarrow m^* + \xi)^T Ae_j \right) e_j \right)_k \Bigg| \\
   &\le (1- (m^*)^2)|(A^2 \xi)_k| + (1-m^*)(\beta + 2\alpha),
\end{flalign*}
where the inequality follows since $(\Cstern )^{-1} = A - A^2(1- (m^*)^2)$ by definition and since $-1 \le \tanh \le 1$. Putting all the observations together, we obtain 
\begin{flalign*}
    &\sqrt{\frac{N}{s_N}}\Bigg| \sum_{k \notin \mathcal{I}_{\widetilde{d}_N}} (\nabla G(\xi))_k (\Cstern t)_k \Bigg| \\
    \le &\sqrt{\frac{N}{s_N}} \sum_{k \notin \mathcal{I}_{\widetilde{d}_N}} \left( (1-(m^*)^2) |(A^2\xi)_k| + \mathcal{O}(1) \right) \mathcal{O}(N^{-5}) \\
    \le& \mathcal{O}\left(\sqrt{\frac{N}{s_N}} N^{-5} \right) \sum_{k \notin \mathcal{I}_{\widetilde{d}_N}} \big((1- (m^*)^2)\|A^2\|2r + \mathcal{O}(1) \big)=o(1).
\end{flalign*}
It remains to show 
$$
\sqrt{\frac{N}{s_N}} \sum_{k\in \mathcal{I}_{\widetilde{d}_N}} (\nabla G (\xi))_k (\Cstern t)_k \to 0,
$$
but for $k\in \mathcal I_{\widetilde{d}_N}$, the contribution of $(\Cstern t)_k$ is not negligible. We will again make use of the exponential decay away from the diagonal, this time for $(\Cstern )^{\frac{1}{2}}$. Hence, we may just use the uniform upper bound
\begin{equation}\label{eq:boundCstern2}
    |(\Cstern  t)_k| \le \|\Cstern \|\|t\| \le C_{\beta, \alpha,t}
\end{equation}
by bounding all eigenvalues and we aim to show that
\begin{align}\label{eq:aimm}
    \sqrt{\frac{N}{s_N}} \sum_{k\in \mathcal{I}_{\widetilde{d}_N}} (\nabla G (\xi))_kC_{\beta, \alpha,t}\to 0
\end{align}
by bounding $(\nabla G(\xi))_k$ for $\xi = \sqrt{\frac{s_N}{N}}(\Cstern )^{\frac{1}{2}}x + \lambda \sqrt{\frac{s_N}{N}}\Cstern t$, with $\lambda \in [0,1]$. 
Recall the definition $\mathcal{I}_{d_N} = \{1, \ldots, d_N, s_N-d_N+1, \ldots, s_N\}$ for 
$$
d_N = -\frac{5}{\log \kappa_4}\log N + \widetilde{d}_N = \left(-\frac{5}{\log \kappa_4}-\frac{5}{\log \kappa_3} \right)\log N =: c_{9}\log N
$$
and split
$$
\left( (\Cstern )^{\frac{1}{2}}x \right)_k = \sum_{j\notin \mathcal{I}_{d_N}} (\Cstern )^{\frac{1}{2}}_{kj}x_j + \sum_{j\in \mathcal{I}_{d_N}} (\Cstern)^{\frac{1}{2}}_{kj}x_j.
$$
It follows from Lemma \ref{lem:exp-decay-C*} that
\begin{align*}
    \sum_{j\notin \mathcal{I}_{d_N}} \Big| (\Cstern)^{\frac{1}{2}}_{kj}x_j\Big| \le c_{8}\sum_{j\notin \mathcal{I}_{d_N}} (\kappa_4^{|j-k|} + \kappa_4^{s_N-|j-k|} )|x_j|\le 2 c_{8}  \kappa_4^{d_N} \sum_{j\notin \mathcal{I}_{d_N}} |x_j|= \mathcal{O}\left(N^{-5} d_N\sqrt{\frac{N}{s_N}}\right).
\end{align*}
For the near part $j\in \mathcal{I}_{d_N}$, we need to use $x \in B_{4\sqrt{2d_N}}^{(2d_N)}$ having small projected norm, i.e.
\begin{flalign}\label{eq:boundCstern}
    \Bigg| \sum_{j\in \mathcal{I}_{d_N}} (\Cstern )^{\frac{1}{2}}_{k,j}x_j  \Bigg| =  \Bigg| \left((\Cstern )^{\frac{1}{2}}P_{\mathcal{I}_{d_N}}x \right)_k \Bigg| \le  \|(\Cstern )^{\frac{1}{2}} P_{\mathcal{I}_{d_N}}x \| \le \|(\Cstern )^{\frac{1}{2}}\|\| P_{\mathcal{I}_{d_N}}x\| \le 4\|(\Cstern )^{\frac{1}{2}}\| \sqrt{2d_N},
\end{flalign}
where the spectral norm $\|(\Cstern )^{\frac{1}{2}}\|$ is bounded uniformly in $N$. In conclusion,
\begin{flalign*}
    |\xi_k| &= \sqrt{\frac{s_N}{N}}\Big|\left((\Cstern )^{\frac{1}{2}}x\right)_k \Big| + \mathcal{O}\left(\sqrt{\frac{s_N}{N}} \right) \\
    &\le \sqrt{\frac{s_N}{N}}\left( 4 \|(\Cstern )^{\frac{1}{2}}\|\sqrt{2d_N} + \mathcal{O}\left( N^{-5} d_N\sqrt{\frac{N}{s_N}}\right) \right) +  \mathcal{O}\left(\sqrt{\frac{s_N}{N}} \right) \\
    &= \mathcal{O}\left(\sqrt{\frac{s_N}{N}\log N} \right) \longrightarrow 0
\end{flalign*}
In other words, $\xi$ is component-wise small and we can do a Taylor expansion of $\tanh$, yielding 
\begin{flalign}\label{eq:Taylor_nablaG}
    \nabla G(\xi)& = \left( (\Cstern )^{-1} - A \right)\xi - (\overrightarrow m^*)^TA  + A\sum_{k=1}^{s_N} \tanh\left( (\overrightarrow m^* + \xi)^TAe_k \right)e_k \nonumber \\
    &= -(1-(m^*)^2) A^2 \xi - (\overrightarrow m^*)^TA + A\sum_{k=1}^{s_N} \Big(\tanh\left( (\overrightarrow m^*)^TAe_k \right) + \sech^2\big(  (\overrightarrow m^*)^TAe_k\big) (Ae_k)^T\xi  \nonumber\\
    &\quad - \sech^2\big(  \widetilde{\xi}^TAe_k\big)\tanh\big(  \widetilde{\xi}^TAe_k\big)\xi^T (Ae_k)(Ae_k)^T \xi \Big)e_k,
\end{flalign}
where $\widetilde{\xi}$ is another intermediate value between $\overrightarrow m^*$ and $\overrightarrow m^*+\xi$. Recall that $\tanh\left((\overrightarrow m^*)^TAe_k \right) = \tanh\left( (\beta + 2\alpha)m^* \right) = m^*$ for every $k=1, \ldots, s_N$, it follows
$$
A\sum_{k=1}^{s_N} \tanh\left( (\overrightarrow m^*)^TAe_k \right) e_k = (\overrightarrow m^*)^TA,
$$
and by \eqref{eq:sechtanh},
$$
\sum_{k=1}^{s_N} \sech^2\left(  (\overrightarrow m^*)^TAe_k\right) (Ae_k)^T\xi e_k  = (1 - (m^*)^2)\sum_{k=1}^{s_N} (Ae_k)^T\xi e_k  = (1- (m^*)^2) A^2\xi.
$$
Therefore, by writing $\tilde\lambda_k = \sech^2\left(  \widetilde{\xi}^TAe_k\right)\tanh\left(  \widetilde{\xi}^TAe_k\right)$ which is a value in $[-1,1]$ depending on $\widetilde{\xi}$ and $k$, the equation \eqref{eq:Taylor_nablaG} reduces to
$$
\nabla G(\xi) = A\sum_{k=1}^{s_N} \tilde\lambda_k(\xi^TAe_k)^2e_k
$$
This means that 
$$
\nabla G (\xi) =  
A \sum_{k=1}^{s_N}\tilde\lambda_k(\beta \xi_k + \alpha(\xi_{k-1} + \xi_{k+1}))^2e_k
$$
implying uniformly for $k\in \mathcal{I}_{\widetilde{d}_N}$
\begin{align}\label{eq:Taylor_nablaG2}
|(\nabla G (\xi) )_k| &= \Big| \beta\tilde\lambda_k(\beta \xi_k + \alpha(\xi_{k-1} + \xi_{k+1}))^2 + \alpha\tilde\lambda_{k-1}(\beta \xi_{k-1} + \alpha(\xi_{k-2} + \xi_{k}))^2 \nonumber \\
&\qquad + \alpha\tilde\lambda_{k+1}(\beta \xi_{k+1} + \alpha(\xi_{k} + \xi_{k+2}))^2\Big| \nonumber  \\
&\le (\beta +2 \alpha)^3 \max_{j=k-2}^{k+2}\xi_j^2=  \mathcal{O}\left(\frac{s_N}{N}\log N \right) .
\end{align}
Ultimately, we obtain our claim \eqref{eq:aimm},
$$
\sqrt{\frac{N}{s_N}}\sum_{k\in \mathcal{I}_{\widetilde{d}_N}} \left( \nabla G(\xi) \right)_k C_{\beta,\alpha, t} = \mathcal{O}\left(\sqrt{\frac{N}{s_N}} \widetilde{d}_N \frac{s_N}{N}\log N \right) = \mathcal{O}\left( \sqrt{\frac{s_N}{N}}(\log N)^2 \right),
$$
converging to 0 for $s_N = o\left(\frac{N}{(\log N)^4} \right)$.

In the case where $s_N < 2d_N = 2 c_9 \log N$, the statement of Lemma \ref{lem:G'=o_LT} is easily proved. Indeed, for every $x \in B_{4\sqrt{2d_N}}$, recall the intermediate value $\xi = \sqrt{\frac{s_N}{N}} (C^*)^{\frac{1}{2}}x + \lambda \sqrt{\frac{s_N}{N}}C^*t$. By the computations in \eqref{eq:boundCstern2} and \eqref{eq:boundCstern} we obtain that for every $k=1,\ldots, s_N$,
\begin{flalign*}
    |\xi_k| \le \sqrt{\frac{s_N}{N}} 4 \| (C^*)^{\frac{1}{2}}\|\sqrt{2d_N} + \sqrt{\frac{s_N}{N}}C_{\beta,\alpha,t} = \mathcal{O}\left( \sqrt{\frac{s_N d_N}{N}}\right) \longrightarrow 0,
\end{flalign*}
as $N \to \infty$. Therefore we can apply the Taylor expansion from \eqref{eq:Taylor_nablaG} yielding, similarly to \eqref{eq:Taylor_nablaG2}, that $|(\nabla G(\xi))_k| = \mathcal{O}\left(\frac{s_Nd_N}{N} \right)$ and ultimately
$$
\sqrt{\frac{N}{s_N}} \sum_{k=1}^{s_N} (\nabla G(\xi))_kC_{\beta,\alpha, t} = \mathcal{O}\left( \sqrt{\frac{s_N}{N}}s_N\log N\right) = \mathcal{O}\left( \frac{(\log N)^{\frac{5}{2}}}{\sqrt{N}}\right) =o(1).
$$
This finishes the proof.
\end{proof}

Lemma \ref{lemma:D_N-restriction_LT} and Lemma \ref{lem:G'=o_LT} imply for \eqref{eq:J(t)/J(0)} the asymptotic equivalence
\begin{align}\label{eq:J(t)/J(0)2}
\frac{J_N(t)}{J_N(0)} \sim 
\exp\left\{ \frac{1}{2}t^T\Cstern t\right\}=\E[e^{tY}],
\end{align}
where $Y \sim \mathcal{N}(0,\Cstern )$.

Let us assume for the moment that $K_N$ and $L_N$ do not contribute in the limit, which we will verify in the upcoming subsections. Then, \eqref{eq:J(t)/J(0)2}, \eqref{eq:MGF_JKL} and removing the Gaussian addition imply that the moment generating function converges
$$
\E\left[\exp\left\{\sqrt{\frac{N}{s_N}}t^T(m-\overrightarrow m^*) \right\} \, \middle\vert m \in \left[m^* - \delta, m^* + \delta \right]^{s_N} \right] \longrightarrow \E\left[e^{t^T\widetilde{Y}}\right],
$$
where $\widetilde{Y}\sim \mathcal{N}(0, \Sigma^* )$ with $\Sigma^* $ being the finite projections limit of $
\Cstern- A^{-1}$. Recalling the definition 
$$
\Cstern = \left( \Hess \varphi_N\left( \overrightarrow m^*\right)\right)^{-1} = \left(A-A^2(1-(m^*)^2) \right)^{-1}
$$
we can compute that
$$
\Cstern - A^{-1} = \left(A-A^2(1-(m^*)^2)\right)^{-1} - A^{-1} = (1- (m^*)^2)\left( I-A(1-(m^*)^2) \right)^{-1}.
$$
In Lemma \ref{lem:matrix_decay} we computed the explicit matrix coefficients of $(I-A)^{-1}$ in the high temperature regime. Since the matrix $\left( I-A(1-(m^*)^2) \right)^{-1}$ has the same structure, it follows immediately that 
\begin{flalign*}
    &(1- (m^*)^2)\left(\left( I-A(1-(m^*)^2) \right)^{-1}\right)_{j,k} \\
    =&  (1- (m^*)^2) \frac{\kappa_5^{|j-k|}+ \kappa_5^{s_N-|j-k|}  }{ (1-\kappa_5^{s_N}) \sqrt{ (1-\beta(1-(m^*)^2))^2 - 4\alpha^2(1-(m^*)^2)^2  } },
\end{flalign*}
where
$$
\kappa_5 = \frac{ (1-\beta(1-(m^*)^2))- \sqrt{ (1-\beta(1-(m^*)^2) ) - 4\alpha^2(1-(m^*)^2)^2 }  }{2\alpha(1-(m^*)^2)}.
$$
In particular, in the case $s_N \to \infty$,
\begin{align}\label{eq:Sigma*}
\Sigma^*_{j,k} =  \frac{(1-(m^*)^2)\kappa_5^{|j-k|}}{\sqrt{ (1-\beta(1-(m^*)^2))^2 - 4\alpha^2(1-(m^*)^2)^2  }}.
\end{align}

This would finish the proof of Theorem \ref{CLT}\,(ii) and it remains to prove that the $K_N$- and $L_N$-terms do not contribute in the limit. 

\subsection{The negligible term K}\label{subsec:K}
To control the term $K_N$ defined in \eqref{eq:K}, let us start by finding an upper bound for $I_N^c$. 

\begin{lemma}\label{lem:K}
For all $0<\delta<2m^*$ exist some $\varepsilon(\delta),r(\delta)>0$ such that
\begin{align*}
I_N^c(x, \overrightarrow m^*, \delta) \le s_N \exp\left\{ -\frac{N}{s_N} \eps\right\}\exp\left\{ \frac{N}{s_N}\sum_{k=1}^{s_N} \log\cosh(x^TAe_k) \right\}
\end{align*}
for all $x\in[m^*-r, m^*+r]^{s_N} $ and consequently, for all $t \in \R^{s_N}$,
\begin{align*}
K_N(t,r, \overrightarrow m^*, \delta) \le s_N \exp\left\{ -\frac{N}{s_N} \eps\right\}J_N(t,r,\overrightarrow m^*).
\end{align*}
\end{lemma}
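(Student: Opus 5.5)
The plan is to view $I_N^c$ as a probability under a product of tilted block measures, and then use a union bound together with a one-block large deviation estimate. First we factorize. For fixed $x$, put $h_k:=x^TAe_k$. By the computation in the proof of Lemma \ref{phi},
\begin{align*}
\frac{I_N^c(x,\overrightarrow m^*,\delta)}{\exp\{\frac{N}{s_N}\sum_k\log\cosh h_k\}}=Q_x\big(m\notin[m^*-\delta,m^*+\delta]^{s_N}\big),
\end{align*}
where $Q_x$ is the product measure under which the spins are independent and each spin in block $k$ has mean $\tanh h_k$. Under $Q_x$ the block magnetizations $m_1,\dots,m_{s_N}$ are independent. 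Each $m_k$ is an average of $N/s_N$ i.i.d.\ $\pm1$ variables with mean $\tanh h_k$. A union bound gives
\begin{align*}
Q_x\big(m\notin[m^*-\delta,m^*+\delta]^{s_N}\big)\le\sum_{k=1}^{s_N}Q_x\big(|m_k-m^*|>\delta\big).
\end{align*}
This union bound is where the factor $s_N$ in the statement comes from.

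Next we locate the means. For $x\in[m^*-r,m^*+r]^{s_N}$ we have $|h_k-(\beta+2\alpha)m^*|\le(\beta+2\alpha)r$, since $h_k=\beta x_k+\alpha(x_{k-1}+x_{k+1})$. Because $\tanh((\beta+2\alpha)m^*)=m^*$ and $\tanh$ is $1$-Lipschitz, this gives $|\tanh h_k-m^*|\le(\beta+2\alpha)r$. Choose $r=r(\delta)$ with $(\beta+2\alpha)r\le\delta/2$. Then the event $|m_k-m^*|>\delta$ forces $|m_k-\tanh h_k|>\delta/2$.

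Then we apply a one-block deviation bound. Hoeffding's inequality for $N/s_N$ bounded independent spins gives $Q_x(|m_k-\tanh h_k|>\delta/2)\le 2\exp\{-\frac{N}{s_N}\delta^2/8\}$, uniformly in $x$ and $k$. Alternatively one can use Cram\'er's bound with the binary relative entropy. Taking $\eps<\delta^2/8$ absorbs the factor $2$ for large $N$, which gives the first claim.

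The bound for $K_N$ then follows by inserting the estimate into \eqref{eq:K}. By \eqref{eq:I+Ic}, the factor $\exp\{-\frac12\frac{N}{s_N}x^TAx\}\exp\{\frac{N}{s_N}\sum_k\log\cosh(x^TAe_k)\}$ equals $\exp\{-\frac{N}{s_N}\varphi_N(x)\}$. So the integrand of $K_N$ is bounded by $s_Ne^{-\frac{N}{s_N}\eps}$ times the integrand of $J_N(t)$ in \eqref{eq:J}. The prefactors of $J_N$ and $K_N$ are identical, and the integrands are positive, so the inequality integrates directly.

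The only delicate step is the uniformity of the deviation bound over $x$ in the cube. Nothing is uniform over a growing dimension except through the union bound, and that costs exactly the factor $s_N$. The condition $\delta<2m^*$ is not needed in this argument; it only matters later, to separate the two wells.
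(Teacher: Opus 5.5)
Your proof is correct and has the same structure as the paper's proof. Both use a union bound over the blocks, which produces the factor $s_N$. Both identify $I_N^c/(I_N+I_N^c)$ as a probability under the tilted product measure (the paper's $\widetilde{\P}$ is your $Q_x$). Both then compare the $K_N$- and $J_N$-integrands pointwise.

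The difference is the one-block estimate. The paper applies a Chernoff bound with tilt $\gamma(\beta+2\alpha)$ and computes the exponential moment as $\log\cosh(\gamma(\beta+2\alpha)+x^TAe_k)$. It then Taylor-expands this, so the derivative $\tanh(\xi^TAe_k)$ at an intermediate point must lie within $\delta/2$ of $m^*$. That silently requires $\gamma$ as well as $r$ to be small, and it gives $\eps=\gamma(\beta+2\alpha)\delta/2$. You instead place the tilted mean $\tanh(x^TAe_k)$ within $\delta/2$ of $m^*$ using only the Lipschitz property of $\tanh$, and then apply Hoeffding around that mean. This separates the two smallness conditions, needs only $r$ small, and gives the explicit rate $\eps<\delta^2/8$.

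Absorbing the factor $2$ into $\eps$ requires $N/s_N$ large. This is harmless in the regime where the lemma is used, and the paper's proof drops the same factor $2$ from the upper and lower tails.
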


\begin{proof}
Recall that $I_N^c(t)$ is a sum over $ m\in \left( [m^*-\delta,m^*+\delta]^{s_N} \right)^c $ and clearly
$$
\left( [m^*-\delta,m^*+\delta]^{s_N} \right)^c = \bigcup_{k=1}^{s_N}  \{ |m_k - m^*| > \delta\}.
$$
For any fixed $k$, we use \eqref{eq:I} and \eqref{eq:Ic} to write
$$
    \sum_{\substack{\sigma\in \{-1,1\}^N\\ m_{k}> m^* + \delta }} \frac{1}{2^N}\exp\left\{ \frac{N}{s_N} x^TAm \right\} =(I_N(x, \overrightarrow m^*, \delta )+I_N^c(x, \overrightarrow m^* , \delta)) \widetilde{\P}(m_{k} > m^* + \delta),
$$
where $\widetilde{\P}$ is the probability measure associated to the weights in the sum on the left hand side. From a Chernoff bound, we obtain for all $\gamma >0$
\begin{flalign}\label{DeltaAway}
    &\sum_{\substack{\sigma\in \{-1,1\}^N\\ m_{k}> m^* + \delta }} \frac{1}{2^N}\exp\left\{ \frac{N}{s_N} x^TAm \right\} \nonumber \\
    \le& (I_N(x)+I_N^c(x)) \exp\left\{ -\frac{N}{s_N}\gamma (\beta + 2\alpha)(m^* + \delta) \right\} \widetilde{\E}\left[\exp\left\{ \gamma(\beta + 2\alpha)\sum_{i\in S_{k}}\sigma_i \right\} \right] \nonumber\\
    =& \exp\left\{ -\frac{N}{s_N}\gamma (\beta + 2\alpha)(m^* + \delta) \right\} \sum_{\sigma_1, \ldots, \sigma_N \in \{-1,1\}} \frac{1}{2^N} \prod_{i \in S_{k}} \exp\left\{ (\gamma(\beta+ 2\alpha) + x^TAe_{k} )\sigma_i\right\} \nonumber\\
    & \quad \quad \quad \times \prod_{\substack{l=1 \\ l\neq k}}^{s_N} \prod_{i \in S_l} \exp\left\{ x^TAe_l \sigma_i \right\} \nonumber \\
    =& \exp\left\{ -\frac{N}{s_N}\gamma (\beta + 2\alpha)(m^* + \delta) \right\}  \exp \left\{ \frac{N}{s_N}\log\cosh\left(\gamma(\beta + 2\alpha) + x^TAe_{k}\right)\right\} \\
    & \quad \quad \quad  \times \exp\left\{ \frac{N}{s_N}\sum_{\substack{l=1 \\ l\neq k}}^{s_N} \log\cosh\left(x^TAe_l\right)\right\}. \nonumber
\end{flalign}
Next, we use a Taylor expansion of 
$$
\log\cosh \left( \gamma(\beta + 2\alpha) + x^TAe_{k} \right) = \log\cosh\left( (\overrightarrow \gamma + x)^TAe_K \right)
$$ 
around $x$, and we obtain
\begin{flalign*}
    &\log\cosh\left(\gamma(\beta + 2\alpha) + x^TAe_{k}\right) \\
    =& \log\cosh\left( x^TAe_{k}\right) + \gamma \frac{\partial}{\partial y_{k}}\log\cosh\left( y^TAe_{k}\right)\Big\vert_{y=\xi} +  \gamma \frac{\partial}{\partial y_{k-1}}\log\cosh\left( y^TAe_{k}\right)\Big\vert_{y=\xi} \\
    &\quad \quad \quad+  \gamma \frac{\partial}{\partial y_{k+1}}\log\cosh\left( y^TAe_{k}\right)\Big\vert_{y=\xi} \\
    =& \log\cosh\left( x^TAe_{k}\right) + \gamma(\beta + 2\alpha) \tanh\left(\xi^TAe_{k} \right),
\end{flalign*}
where $\xi$ is an intermediate value between $x$ and $x+\overrightarrow \gamma$.
Then we apply another Taylor expansion of $\tanh\left( \xi^TAe_k \right) = \tanh(\beta \xi_k + \alpha(\xi_{k-1} + \xi_{k+1}))$ around $(m^*,m^*,m^*)$ for $(\xi_{k-1},\xi_{k},\xi_{k+1})$ (recall that $|\xi_k - m^*|\le \gamma + |x_k - m^*| \le \gamma + r $) yielding 
$$
\tanh\left(x^TAe_{k}\right) = \tanh((\beta + 2\alpha)m^*)+ (\beta + 2\alpha)r\sech^2(\xi^TAe_{k})
$$
for an intermediate value $\xi$ but since $\sech \le 1$, we can choose $r$ small enough such that $(\beta + 2\alpha) r\sech^2\left(\xi^T A e_{2 k-1}\right) \le \frac{\delta}{2}$ and using the definition of $m^*$, we obtain
$$
\log\cosh\left(\gamma(\beta + 2\alpha) + x^TAe_{k}\right) \le \log\cosh\left( x^TAe_{k}\right) + \gamma(\beta + 2\alpha)m^* + \gamma\frac{\delta}{2}(\beta + 2\alpha).
$$
Plugging this into (\ref{DeltaAway}) finally yields
$$\sum_{\substack{\sigma\in \{-1,1\}^N\\ m_{k}> m^* + \delta }} \frac{1}{2^N}\exp\left\{ \frac{N}{s_N} x^TAm \right\} \le \exp \left\{ -\frac{N}{s_N}\gamma(\beta + 2\alpha)\frac{\delta}{2} \right\}\exp \left\{ \frac{N}{s_N}\sum_{l=1}^{s_N} \log\cosh\left(x^TAe_l\right) \right\}.
$$
Similarly, we obtain the same upper bound for $\{m_{k} < m^* - \delta\}$. Hence, by setting $\eps = \gamma(\beta + 2\alpha)\frac{\delta}{2}>0$, we arrive at
$$
I_N^c(x, \overrightarrow m^*, \delta) \le s_N \exp\left\{ -\frac{N}{s_N} \eps\right\}\exp\left\{ \frac{N}{s_N}\sum_{k=1}^{s_N} \log\cosh(x^TAe_k) \right\}
$$
and plugging this into $K_N(t)$ and applying a change of variables yields
\begin{align*}
    K_N(t) \le& s_N \exp\left\{ -\frac{N}{s_N} \eps\right\} \left( \frac{N}{s_N}\right)^{\frac{s_N}{2}}\\
    &\times\int_{[m^*-r,m^*+r]^{s_N}} \exp \left\{ -\frac{N}{s_N} \left(\varphi_N(x) - \varphi_N(\overrightarrow m^*)\right) + \sqrt{\frac{N}{s_N}}t^T(x-\overrightarrow m^*) \right\} \d^{s_N}x \\
    =&s_N \exp\left\{ -\frac{N}{s_N} \eps\right\}J_N(t).\qedhere
\end{align*}
\end{proof}
It follows from \eqref{eq:J(t)/J(0)2} that $\frac{J_N(t)}{J_N(0)}$ is bounded as $N\to \infty$, hence $\frac{K_N(t)}{J_N(0)} = \mathcal{O}\big(s_N \exp\{-\eps \frac{N}{s_N}\}\big)$ is negligible. 

\subsection{The negligible term L}\label{subsec:L}
At last, let us take care of the $L_N(t)$ term as defined in \eqref{eq:L}. We want to show that it is negligible in the limit in the sense that $\frac{L_N(t)}{J_N(0)} \to 0$. To that end, let us make the following observation.
\begin{lemma}\label{lem:L1}
For $0<\delta <2m^*$ and $ 0< r < m^*$ it holds
   \begin{align}
    &L_N(t,r,\overrightarrow m^*, \delta )\nonumber \\
    \le &K_N(t,r,-\overrightarrow m^*, (2m^* - \delta )) 
    + \left( \frac{N}{s_N}\right)^{\frac{s_N}{2}} \exp \left\{\frac{N}{s_N}\varphi_N(\overrightarrow m^*) - \sqrt{\frac{N}{s_N}}t^T\overrightarrow m^* \right\} \nonumber\\
    &\quad \times \int_{\R^{s_N}\setminus \left( \left[m^* - r,  m^* +r\right]^{s_N} \cup \left[ -m^* - r,  -m^* +r\right]^{s_N}\right) }\exp \left\{-\frac{N}{s_N} \varphi_N(x) + \sqrt{\frac{N}{s_N}}t^Tx \right\} \d^{s_N}x. \label{eq:LN_int}
\end{align}
\end{lemma}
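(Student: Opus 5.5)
The plan is to split the domain of integration in $L_N$, namely $\R^{s_N}\setminus[m^*-r,m^*+r]^{s_N}$, into two disjoint pieces: the cube $[-m^*-r,-m^*+r]^{s_N}$ around the other minimiser, and the remainder $\R^{s_N}\setminus\big([m^*-r,m^*+r]^{s_N}\cup[-m^*-r,-m^*+r]^{s_N}\big)$. The two cubes are disjoint because $r<m^*$. Each piece is then bounded separately by treating the factor $I_N(x,\overrightarrow m^*,\delta)$ differently.

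On the remainder I would use only the trivial bound $I_N\le I_N+I_N^c$ together with \eqref{eq:I+Ic}:
$$\exp\Big\{-\tfrac12\tfrac N{s_N}x^TAx\Big\}\,I_N(x,\overrightarrow m^*,\delta)\le \exp\Big\{-\tfrac N{s_N}\varphi_N(x)\Big\}.$$
This holds because $I_N$ is a sum of nonnegative terms, and by \eqref{def_phi}. Multiplying by the common prefactor $(\tfrac N{s_N})^{s_N/2}\exp\{\tfrac N{s_N}\varphi_N(\overrightarrow m^*)-\sqrt{\tfrac N{s_N}}t^T\overrightarrow m^*\}$ and by $\exp\{\sqrt{N/s_N}\,t^Tx\}$ gives exactly the integral term in \eqref{eq:LN_int}.

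On the cube around $-\overrightarrow m^*$ I would identify $I_N(x,\overrightarrow m^*,\delta)$ as part of a complement sum centred at $-\overrightarrow m^*$. The set $[m^*-\delta,m^*+\delta]^{s_N}$ lies in the complement of $[-m^*-(2m^*-\delta),-m^*+(2m^*-\delta)]^{s_N}$. Indeed, every coordinate of the first set satisfies $m_k\ge m^*-\delta$, which equals $-m^*+(2m^*-\delta)$; the boundary value is excluded by working with the strict/closed convention of \eqref{eq:I}. Here $2m^*-\delta>0$ since $\delta<2m^*$. Because all summands are nonnegative, this gives
$$I_N(x,\overrightarrow m^*,\delta)\le I_N^c(x,-\overrightarrow m^*,2m^*-\delta).$$

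Plugging this into the cube integral reproduces the definition \eqref{eq:K} of $K_N(t,r,-\overrightarrow m^*,2m^*-\delta)$, once the prefactors are matched. By the spin-flip symmetry $\varphi_N(-\overrightarrow m^*)=\varphi_N(\overrightarrow m^*)$, so the $\varphi_N$ prefactor agrees. The main obstacle is the linear factor $\exp\{-\sqrt{N/s_N}\,t^T\overrightarrow m^*\}$: in $K_N(t,r,-\overrightarrow m^*,\cdot)$ it appears as $\exp\{+\sqrt{N/s_N}\,t^T\overrightarrow m^*\}$. I expect to need one of two fixes. The first is to read \eqref{eq:K} with the prefactor evaluated at the fixed centre $\overrightarrow m^*$ (the convention of \eqref{eq:MGF_JKL}, where the shift is by $\overrightarrow m^*$ throughout). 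The second is to absorb the discrepancy $\exp\{2\sqrt{N/s_N}\,t^T\overrightarrow m^*\}$ by replacing $t$ with $-t$ through the reflection $x\mapsto -x$. Since $\varphi_N$ is even and $I_N^c(-x,-\overrightarrow m^*,\cdot)=I_N^c(x,\overrightarrow m^*,\cdot)$, this reflection lets me rewrite the cube integral with the correct sign conventions. I would check carefully which convention makes the two sides agree literally, and adopt the reflection argument if the direct one does not.

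Adding the two pieces then yields \eqref{eq:LN_int}.
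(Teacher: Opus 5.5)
Your decomposition of the domain and your two bounds ($I_N\le I_N+I_N^c$ via \eqref{eq:I+Ic} on the remainder, and $I_N(x,\overrightarrow m^*,\delta)\le I_N^c(x,-\overrightarrow m^*,2m^*-\delta)$ on the cube around $-\overrightarrow m^*$) are exactly the paper's proof. The prefactor mismatch you flag is real, and the paper passes over it. Since $\varphi_N$ is even, the prefactor of $L_N(t,r,\overrightarrow m^*,\delta)$ divided by that of $K_N(t,r,-\overrightarrow m^*,\cdot)$, with \eqref{eq:K} read literally, is $e^{-2\sqrt{N/s_N}\,t^T\overrightarrow m^*}$. So the argument gives the stated inequality only when $t^T\overrightarrow m^*=m^*\sum_i t_i\ge 0$.

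Your fallback via the reflection $x\mapsto -x$ cannot repair this. Combined with evenness of $\varphi_N$ and $I_N^c(-x,-\overrightarrow m^*,\cdot)=I_N^c(x,\overrightarrow m^*,\cdot)$, the reflection is an exact identity $K_N(t,r,-\overrightarrow m^*,\delta')=K_N(-t,r,\overrightarrow m^*,\delta')$. Both sides carry the same prefactor $e^{+\sqrt{N/s_N}\,t^T\overrightarrow m^*}$, so the ratio to the prefactor of $L_N$ is unchanged.

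Take your first option and state precisely what is proved: the bound with the first term replaced by $e^{-2\sqrt{N/s_N}\,t^T\overrightarrow m^*}K_N(t,r,-\overrightarrow m^*,2m^*-\delta)$. The extra factor is at most $e^{2m^*\|t\|_1\sqrt{N/s_N}}$. This is all that is used later, but Lemma \ref{lem:K} must then be combined with it rather than quoted verbatim. Lemma \ref{lem:K} at the centre $-\overrightarrow m^*$ gives $K_N(t,r,-\overrightarrow m^*,\delta')\le s_N e^{-\eps N/s_N}J_N(t,r,-\overrightarrow m^*)$. This is where the reflection does help: $J_N(t,r,-\overrightarrow m^*)=J_N(-t,r,\overrightarrow m^*)$, and $J_N(-t)/J_N(0)$ is bounded by \eqref{eq:J(t)/J(0)2}. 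Hence the factor $e^{\mathcal O(\sqrt{N/s_N})}$ is swamped by $e^{-\eps N/s_N}$.

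Separately, your handling of the boundary is not correct as the conventions stand. The cubes in \eqref{eq:I} and \eqref{eq:Ic} are both closed, and $[m^*-\delta,m^*+\delta]$ and $[-3m^*+\delta,m^*-\delta]$ share the point $m^*-\delta$. So the configuration with all $m_k=m^*-\delta$ is counted in $I_N(x,\overrightarrow m^*,\delta)$ but not in $I_N^c(x,-\overrightarrow m^*,2m^*-\delta)$. The clean fix is to use any radius $\delta''\in(0,2m^*-\delta)$ in place of $2m^*-\delta$; Lemma \ref{lem:K} applies equally. The paper's claimed inclusion has the same slip.
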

\begin{proof}
Recall that we defined $L_N$ in \eqref{eq:L} to be an integration over
\begin{flalign}\label{L_Nbound1}
   &\left(\left[ m^* - r,  m^* +r\right]^{s_N}\right)^c \nonumber \\
   = & \R^{s_N}\setminus \left( \left[m^* - r,  m^* +r\right]^{s_N} \cup \left[ -m^* - r,  -m^* +r\right]^{s_N}\right) \cup \left[ -m^* - r,  -m^* +r\right]^{s_N},
\end{flalign}
where we separated the area $ [ -m^* - r,  -m^* +r ]^{s_N}$ eventually leading to $K_N(t,r,-\overrightarrow m^*, (2m^* - \delta )) $.
The integrant of $L_N$ included $I_N$, which was defined in \eqref{eq:I} as a sum over spins within
$$
\left[ m^* - \delta,  m^* +\delta\right]^{s_N} \subset \left(\left[ -m^* - (2m^* - \delta), -m^*+ (2m^* -\delta)\right]^{s_N}\right)^c,
$$ 
where we used $\delta  < 2m^* $. By the definition \eqref{eq:Ic} of $I_N^c$, it follows 
\begin{equation}\label{L_Nbound2}
    I_N(x, \overrightarrow m^*, \delta ) \le I_N^c(x, - \overrightarrow m^*, 2m^* - \delta),
\end{equation}
which leads to $K_N$ as in \eqref{eq:K}. We bound the remaining integration by
\begin{equation}\label{L_Nbound3}
    I_N \le I_N + I_{N}^c = \exp \left\{\frac{N}{s_N}\sum_{k=1}^{s_N}\log \cosh\left( x^TAe_k\right) \right\},
\end{equation}
where we used \eqref{eq:I+Ic}. This $\log\cosh$-term completes the function $\varphi_N$, and the claim follows from the observations (\ref{L_Nbound1}), (\ref{L_Nbound2}) and (\ref{L_Nbound3}).

\end{proof}
The first term of Lemma \ref{lem:L1} is $K_N$, which we controlled in Lemma~\ref{lem:K}.
The integration area of the second term in \eqref{eq:LN_int} is bounded away from both of the global minimizers, hence we expect its contribution to be negligible in the limit. 
In the high temperature regime we neglected all integration areas that are "too far" from the minimum at the origin, using the subgaussian behavior of the density function. In the low temperature regime, clearly, the integration in \eqref{eq:LN_int} fails to have this behavior "close" to the origin. Far from the origin we can again make use of the fact that our distribution has subgaussian tails. 
\begin{lemma}\label{lem:subgaussianLT}
  Let $\lambda_{\min}, \lambda_{\max}>0$ be the smallest and largest eigenvalue of the matrix $A$. Let $R>2\frac{\lambda_{\max}}{\lambda_{\min}}$ such that $c_{11}= \frac{\lambda_{\min}}{2} - \frac{\lambda_{\max}}{R}>0$, then for any $\|x\|>{R\sqrt{N}}$ it holds
   $$
   \frac{N}{s_N}\varphi_N\left( \sqrt{\frac{s_N}{N}}x \right) \ge c_{11}\|x\|_2^2.
   $$
\end{lemma}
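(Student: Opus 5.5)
Unwinding the scaling with $y=\sqrt{s_N/N}\,x$, we have
$$\frac{N}{s_N}\varphi_N\Big(\sqrt{\tfrac{s_N}{N}}x\Big)=\frac12 x^TAx-\frac{N}{s_N}\sum_{k=1}^{s_N}\log\cosh\Big(\sqrt{\tfrac{s_N}{N}}(Ax)_k\Big).$$
The quadratic term is bounded below by $\frac{\lambda_{\min}}{2}\|x\|^2$, since $A$ is positive definite. So it suffices to show that the $\log\cosh$ term is at most $\frac{\lambda_{\max}}{R}\|x\|^2$ whenever $\|x\|>R\sqrt N$. Note that the usual quadratic bound $\log\cosh(y)\le y^2/2$ is useless here, because it would exactly cancel against the full quadratic form $\frac12 x^TA^2x$ and could exceed the first term. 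Instead I will use the linear bound $\log\cosh(y)\le |y|$, which follows from $\cosh y\le e^{|y|}$.

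With the linear bound, the $\log\cosh$ term is at most
$$\frac{N}{s_N}\sqrt{\frac{s_N}{N}}\sum_{k=1}^{s_N}|(Ax)_k|=\sqrt{\frac{N}{s_N}}\,\|Ax\|_1\le\sqrt{\frac{N}{s_N}}\sqrt{s_N}\,\|Ax\|_2\le\sqrt N\,\lambda_{\max}\|x\|,$$
using Cauchy–Schwarz $\|v\|_1\le\sqrt{s_N}\|v\|_2$ and $\|A\|_2=\lambda_{\max}$. If $\|x\|>R\sqrt N$, then $\sqrt N<\|x\|/R$, so this term is at most $\frac{\lambda_{\max}}{R}\|x\|^2$. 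Combining the two bounds gives
$$\frac{N}{s_N}\varphi_N\Big(\sqrt{\tfrac{s_N}{N}}x\Big)\ge\Big(\frac{\lambda_{\min}}{2}-\frac{\lambda_{\max}}{R}\Big)\|x\|^2=c_{11}\|x\|^2.$$
The hypothesis $R>2\lambda_{\max}/\lambda_{\min}$ is exactly what makes $c_{11}>0$.

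There is no real obstacle in this argument. The only points needing care are choosing the linear rather than the quadratic bound on $\log\cosh$, and tracking the factor $\sqrt{s_N}$ from the $\ell^1$–$\ell^2$ comparison, which cancels the prefactor $\sqrt{N/s_N}$ down to exactly $\sqrt N$. The constants $\lambda_{\min}\ge\beta-2\alpha$ and $\lambda_{\max}\le\beta+2\alpha$ are independent of $N$, so $c_{11}$ is uniform in $N$.
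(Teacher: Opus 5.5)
Your proof is correct and follows the paper's argument essentially line by line. Like the paper, you use the linear bound $\log\cosh(z)\le |z|$, the comparison $\|Ax\|_1\le\sqrt{s_N}\|Ax\|_2$, and $\sqrt N<\|x\|/R$ to absorb the $\log\cosh$ term into $\frac{\lambda_{\max}}{R}\|x\|^2$.
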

\begin{proof}
A simple calculation using that $\log \cosh(z) \le |z|$ and $\|x\|_1 \le \sqrt{s_N}\|x\|_2$ shows
\begin{align*}
    \frac{N}{s_N}\varphi_N\left( \sqrt{\frac{s_N}{N}}x \right) &= \frac{1}{2}x^TAx - \frac{N}{s_N}\sum_{k=1}^{s_N} \log \cosh\left( \sqrt{\frac{s_N}{N}}x^TAe_k \right) \ge \frac{1}{2}x^TAx - \frac{N}{s_N}\sum_{k=1}^{s_N} \sqrt{\frac{s_N}{N}} | x^TAe_k | \\
    &= \frac{1}{2}x^TAx - \sqrt{\frac{N}{s_N}} \| Ax \|_1 \ge  \frac{1}{2}x^TAx - \sqrt{N} \| Ax \|_2 \ge  \frac{1}{2}x^TAx - \frac{1}{R}\|x\|_2\| Ax \|_2 \\
    &\ge  \left(\frac{\lambda_{\min}}{2} - \frac{\lambda_{\max}}{R} \right)\|x\|_2^2.\qedhere
    \end{align*}
\end{proof}
 
Under our assumption $s_N = o\left( \frac{\sqrt{N}}{\log N}\right)$, the following crude bound will be sufficient.
\begin{lemma}\label{lem:LJ}
If $s_N = o\left( \frac{\sqrt{N}}{\log N}\right)$ and for $0<\delta <2m^*$ and $ 0< r < m^*$ it holds
\begin{align*}
\frac{L_N(t,r,\overrightarrow m^*, \delta )}{J_N(0,r,\overrightarrow m^*)}\le e^{\mathcal O(\sqrt N)} \int_{\Delta_N^r}  \exp \left\{-\frac{N}{s_N} \left(\varphi_N\left( \sqrt{\frac{s_N}{N}}x\right) - \varphi_N(\overrightarrow m^*) \right) \right\} \d^{s_N}x  +o(1) ,
\end{align*}
where 
   $$
   \Delta_N^r = B_{R\sqrt{N}} \setminus \left( \left[\sqrt{\frac{N}{s_N}}( m^*-r),\sqrt{\frac{N}{s_N}}( m^*+r)  \right]^{s_N} \cup \left[\sqrt{\frac{N}{s_N}}(- m^*-r),\sqrt{\frac{N}{s_N}}(- m^*+r)  \right]^{s_N}  \right)
   $$
\end{lemma}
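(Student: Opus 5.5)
We start from Lemma \ref{lem:L1}. Its first term is $K_N(t,r,-\overrightarrow m^*,2m^*-\delta)$. By Lemma \ref{lem:K}, applied at the minimizer $-\overrightarrow m^*$ (the model is symmetric under $x\mapsto -x$), this is at most $s_N e^{-\eps N/s_N}J_N(t,r,-\overrightarrow m^*)$. By the global flip symmetry of $\varphi_N$, $J_N(t,r,-\overrightarrow m^*)$ is $J_N(-t,r,\overrightarrow m^*)$ up to sign conventions in the linear term. By \eqref{eq:J(t)/J(0)2}, its ratio to $J_N(0)$ stays bounded. So the first term contributes $\mathcal O(s_N e^{-\eps N/s_N})=o(1)$ after division by $J_N(0)$; this is the $o(1)$ in the statement.

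For the second term in \eqref{eq:LN_int}, we substitute $x=\sqrt{s_N/N}\,y$. This produces the Jacobian $(s_N/N)^{s_N/2}$, which cancels the prefactor $(N/s_N)^{s_N/2}$. The linear term $\sqrt{N/s_N}\,t^T(x-\overrightarrow m^*)$ becomes $t^Ty-\sqrt{N/s_N}\,t^T\overrightarrow m^*$. The integration region becomes the complement of the two rescaled cubes.

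We then split $\R^{s_N}$ into $B_{R\sqrt N}$ and its complement. On $B_{R\sqrt N}$ we have $|t^Ty|\le\|t\|R\sqrt N$ and $\sqrt{N/s_N}\,|t^T\overrightarrow m^*|\le \|t\|_1\sqrt N$, so the whole linear term is $e^{\mathcal O(\sqrt N)}$ because $t$ is fixed with finitely many entries. To divide by $J_N(0)$ we use Corollary \ref{cor:J(0)not0}: $J_N(0)\ge \varepsilon^{s_N}$, hence $1/J_N(0)\le e^{\mathcal O(s_N)}=e^{o(\sqrt N)}$ since $s_N=o(\sqrt N/\log N)$. Together these give exactly the $\Delta_N^r$-integral with prefactor $e^{\mathcal O(\sqrt N)}$.

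Outside $B_{R\sqrt N}$ we use Lemma \ref{lem:subgaussianLT}. The integrand is bounded by
\[
\exp\big\{-c_{11}\|y\|^2+\|t\|\|y\|+\tfrac{N}{s_N}\varphi_N(\overrightarrow m^*)+\mathcal O(\sqrt N)\big\}.
\]
Here $\tfrac{N}{s_N}|\varphi_N(\overrightarrow m^*)|=N\,|F\text{-type constant}|=\mathcal O(N)$, because $\varphi_N(\overrightarrow m^*)=s_N\cdot c(\beta,\alpha)$. For $\|y\|\ge R\sqrt N$ we have $c_{11}\|y\|^2\ge c_{11}R^2N$. We therefore enlarge $R$ (equivalently, shrink the constant) so that $c_{11}R^2/2$ exceeds $|c(\beta,\alpha)|$ plus some margin. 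Integrating the Gaussian tail in $s_N$ dimensions, the volume factor is at most $e^{\mathcal O(s_N\log N)}=e^{o(\sqrt N)}$. Dividing by $J_N(0)\ge\varepsilon^{s_N}$, the whole contribution is $e^{-cN}$, hence $o(1)$.

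The main obstacle is exactly this dimension bookkeeping: every $s_N$-fold factor (the Gaussian volume and $1/J_N(0)$) must be $e^{o(\sqrt N)}$ or $e^{o(N)}$, and the constant $\varphi_N(\overrightarrow m^*)$ of order $s_N$ must be dominated by the Gaussian tail. That requires choosing $R$ large relative to $\beta,\alpha$, which Lemma \ref{lem:subgaussianLT} allows, since $c_{11}\to\lambda_{\min}/2$ as $R$ grows.
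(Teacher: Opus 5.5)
Your proof is correct and follows the paper's argument: decompose via Lemma~\ref{lem:L1}, control the $K_N$-term by Lemma~\ref{lem:K} (your identity $J_N(t,r,-\overrightarrow m^*)=J_N(-t,r,\overrightarrow m^*)$ spells out what the paper leaves implicit), rescale, bound the linear term by $e^{\mathcal O(\sqrt N)}$ on $\Delta_N^r$ together with Corollary~\ref{cor:J(0)not0}, and handle $B_{R\sqrt N}^c$ with Lemma~\ref{lem:subgaussianLT}. The only small difference is in the far region: you enlarge $R$ so the Gaussian tail dominates $\frac{N}{s_N}|\varphi_N(\overrightarrow m^*)|$, whereas the paper uses $\frac{N}{s_N}\varphi_N(\overrightarrow m^*)=N\big(\frac{\beta+2\alpha}{2}(m^*)^2-\log\cosh((\beta+2\alpha)m^*)\big)<0$, so this factor is favorable and every admissible $R$ works without enlargement.
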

\begin{proof}
First, note that Corollary \ref{cor:J(0)not0} gives $J_N(0)^{-1}=e^{\mathcal O(s_N)}=e^{\mathcal O(\sqrt N)}$.

Our goal is to bound the terms of Lemma \ref{lem:L1}, where the first term $K_N$ has been bounded in Lemma~\ref{lem:K}. By a change of variables for the integral \eqref{eq:LN_int}, it remains to bound
\begin{align*} 
\int_{ \Delta_N^r\cup B_{R\sqrt{N}} ^c} \exp\left\{ - \frac{N}{s_N}\left( \varphi_N\left(\sqrt{\frac{s_N}{N}}x \right) - \varphi_N(\overrightarrow m^*) \right) + t^T\bigg(x -\sqrt{\frac{N}{s_N}} \overrightarrow m^* \bigg)  \right\} \d^{s_N}x.
\end{align*}
The integration far away from the origin is negligible due to Lemma \ref{lem:subgaussianLT}, Cauchy-Schwarz and a Gaussian tail bound
   \begin{flalign*}
       &\int_{B_{R\sqrt{N}}^c} \exp\left\{ - \frac{N}{s_N}\left( \varphi_N\left(\sqrt{\frac{s_N}{N}}x \right) - \varphi_N(\overrightarrow m^*) \right) + t^T\left(x -\sqrt{\frac{N}{s_N}} \overrightarrow m^* \right)  \right\} \d x \\
        \le&  \exp\left\{ \frac{N}{s_N}\varphi_N(\overrightarrow m^*) + \sqrt{ {N} }\|t\|m^* \right\} \int_{B_{R\sqrt{N}}^c} \exp\left\{ - c_{11}\|x\|^2 + \|t\| \|x\| \right\}\d x \\
        \le& \exp\left\{ N\left(\frac{\beta + 2\alpha}{2}(m^*)^2 - \log\cosh((\beta + 2 \alpha)m^*) + o(1)\right)\right\} \exp\left\{2c_{11} R^2 N \right\} (2\pi 2c_{11})^{\frac{s_N}{2}}   =o(1),
   \end{flalign*} 
      since $\frac{\beta + 2\alpha}{2}(m^*)^2 - \log\cosh((\beta + 2 \alpha)m^*)<0$. On the other hand, on $\Delta_N^r$, we use a similar bound $t^T (x -\sqrt{\frac{N}{s_N}} \overrightarrow m^*)=\mathcal O(\sqrt N)$, yielding the claim.
\end{proof}

As a last step, we need the following uniform energy gap.

\begin{lemma}\label{lem:minimal}
    If $s_N = o\left( \frac{\sqrt{N}}{\log N}\right)$ and $0<r < m^*$, then there exists $\rho>0$ (that depends only on $r$ and not on $N$) such that  
    \begin{flalign*}
         \inf_{x\in \Delta_N^r}  \varphi_N\left( \sqrt{\frac{s_N}{N}}x\right) \ge \varphi_N(\overrightarrow m^*) +\rho .
    \end{flalign*}
\end{lemma}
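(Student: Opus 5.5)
The plan is to lower bound $\varphi_N$ by a sum of nonnegative local terms through a Legendre-duality rewriting of $\log\cosh$. The resulting bound is uniform in $N$ and in the dimension, so the restriction to $B_{R\sqrt N}$ and the growth condition on $s_N$ are not really needed.

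\emph{Step 1 (the rewriting).} Substitute $y=\sqrt{s_N/N}\,x$, so we must bound $\varphi_N(y)-\varphi_N(\overrightarrow m^*)$ from below for every $y$ outside $[m^*-r,m^*+r]^{s_N}\cup[-m^*-r,-m^*+r]^{s_N}$. Put $I(m)=\log 2-s(m)$. Then $\log\cosh z=\sup_m\{zm-I(m)\}$, and the supremum is attained at $m=\tanh z$, so $\log\cosh z=z\tanh z-I(\tanh z)$. Set $m_k:=\tanh((Ay)_k)\in(-1,1)$. Then
\begin{align*}
\varphi_N(y)=\tfrac12 y^TAy-y^TAm+\sum_k I(m_k)=\tfrac12 (y-m)^TA(y-m)-\tfrac12 m^TAm+\sum_k I(m_k).
\end{align*}
Next write $-\tfrac12 m^TAm=-\sum_k\big(\tfrac{\beta+2\alpha}2 m_k^2\big)+\tfrac\alpha2\sum_k(m_k-m_{k+1})^2$, which is the Cauchy--Schwarz bound \eqref{CauchySchwarz} with its exact remainder. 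This gives
\begin{align*}
\varphi_N(y)=\tfrac12 (y-m)^TA(y-m)+\tfrac\alpha2\sum_k(m_k-m_{k+1})^2-\sum_k F_{\beta+2\alpha}(m_k),
\end{align*}
with $F_{\beta+2\alpha}$ as in \eqref{F}. At $y=\overrightarrow m^*$ we have $m=\overrightarrow m^*$, since $\tanh((\beta+2\alpha)m^*)=m^*$. So $\varphi_N(\overrightarrow m^*)=-s_NF_{\beta+2\alpha}(m^*)$, and
\begin{align*}
\varphi_N(y)-\varphi_N(\overrightarrow m^*)\ge \tfrac{\lambda_{\min}}2\|y-m\|^2+\tfrac\alpha2\sum_k(m_k-m_{k+1})^2+\sum_k\big(F_{\beta+2\alpha}(m^*)-F_{\beta+2\alpha}(m_k)\big).
\end{align*}
Here $\lambda_{\min}\ge\beta-2\alpha>0$, and every term on the right is nonnegative because $\pm m^*$ are the global maximizers of $F_{\beta+2\alpha}$.

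\emph{Step 2 (case analysis).} Fix $\eta=\min\{r/2,m^*/2\}$.
\begin{itemize}
\item If some $m_k$ satisfies $\min\{|m_k-m^*|,|m_k+m^*|\}>\eta$, the last sum is at least $c_{\beta,\alpha}(\eta)>0$, the Curie--Weiss gap already used in Lemma \ref{LT1}.
\item Otherwise every $m_k$ is within $\eta$ of $\pm m^*$. If two cyclic neighbours have opposite signs, then $(m_k-m_{k+1})^2\ge(2m^*-2\eta)^2\ge (m^*)^2$, giving a contribution of at least $\alpha (m^*)^2/2$.
\item Otherwise all $m_k$ lie within $\eta$ of the same point, say $+m^*$. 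Since $y$ lies outside the cube around $\overrightarrow m^*$, some $k$ has $|y_k-m^*|>r$, hence $|y_k-m_k|>r-\eta\ge r/2$. The quadratic term is then at least $\lambda_{\min}r^2/8$. The case near $-m^*$ is symmetric, using the other excluded cube.
\end{itemize}
Hence $\rho:=\min\{c_{\beta,\alpha}(\eta),\alpha(m^*)^2/2,\lambda_{\min}r^2/8\}$ works, and it depends only on $r$ and the fixed parameters $\alpha,\beta$.

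\emph{Main obstacle.} The delicate point is Step 1: one needs an exact identity rather than a Taylor expansion, because the dimension $s_N$ grows and any per-coordinate error would accumulate. The identity must also make all local contributions nonnegative, so that a single bad coordinate already yields a dimension-free gap. The Legendre rewriting with $m=\tanh(Ay)$ achieves both, and the rest is the elementary three-case analysis above. I would double-check the sign bookkeeping in $-\tfrac12 m^TAm$ against the circulant structure of $A$, including the periodic identification $s_N+1=1$.
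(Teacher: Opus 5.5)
Your argument is correct, and it takes a genuinely different and cleaner route than the paper's. The paper \emph{localizes} the infimum. It combines the Section~4 analysis (the only one-signed critical points are $\pm\overrightarrow m^*$, and the origin is not a minimum) with the sign-flip inequality from the proof of Lemma~\ref{SameSign}, and concludes that the infimum over $\Delta_N^r$ is attained on $\partial\Delta_N^r$ or inside a flipped copy of the box. It then handles each piece separately: the outer sphere by Lemma~\ref{lem:subgaussianLT}, the boundary of the box around $\overrightarrow m^*$ by the uniform convexity of Lemma~\ref{lem:log-concaveLT}, and flipped boxes by comparison with the unflipped point. This yields $\rho=\min\{\lambda_{\min}r^2,2\beta(m^*-r)\}$.

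You instead write $\varphi_N(y)-\varphi_N(\overrightarrow m^*)$ \emph{exactly} as a sum of three nonnegative terms. You use the Legendre duality $\log\cosh z=\sup_m\{zm-I(m)\}$ evaluated at $m=\tanh(Ay)$, together with the exact form of \eqref{CauchySchwarz}. I checked the identity, including the cyclic bookkeeping, the case $s_N=2$ where the off-diagonal entry is $2\alpha$, and $\varphi_N(\overrightarrow m^*)=-s_NF_{\beta+2\alpha}(m^*)$. Your three cases are exhaustive, since $\eta<m^*$ forces every $m_k$ to have a definite sign. Two cosmetic points:
\begin{itemize}
\item Use the $N$-independent bound $\beta-2\alpha$ in place of $\lambda_{\min}$ in $\rho$.
\item Define the Curie--Weiss gap directly as $F_{\beta+2\alpha}(m^*)-\max\{F_{\beta+2\alpha}(m): m\in[-1,1],\ \min(|m-m^*|,|m+m^*|)\ge\eta\}$, which is positive by continuity and compactness, rather than quoting the constant of Lemma~\ref{LT1}.
\end{itemize}

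What your route buys is an explicit, dimension-free gap valid on all of $\R^{s_N}$ outside the two cubes. It needs neither the cut-off $B_{R\sqrt N}$ nor the assumption $s_N=o(\sqrt N/\log N)$; that assumption is then only used when combining with Lemma~\ref{lem:LJ}. It also avoids the most delicate step of the paper's argument, namely excluding mixed-sign critical points of $\varphi_N$ in $\Delta_N^r$ that do not lie in any flipped box. The paper's route reuses the energy-landscape analysis already done in Section~4. Yours shows that the Hubbard--Stratonovich functional $\varphi_N$ and the mean-field free energy $F_{\beta+2\alpha}$ from the law-of-large-numbers proofs are tied by an exact identity, which is arguably the more illuminating picture.
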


With this lemma at hand, the proof of $L_N(t)/J_N(0)=o(1)$ is complete. Indeed, Lemma \ref{lem:LJ} and Lemma \ref{lem:minimal} imply
\begin{align}
    \frac{L_N(t,r,\overrightarrow m^*, \delta )}{J_N(0,r,\overrightarrow m^*)}\le e^{-\rho N/s_N+\mathcal O(\sqrt N)} +o(1)\to 0.
\end{align}
  
     Note that this is the only step in the proof of Theorem \ref{CLT}\,(ii) using the assumption $s_N = o\left( \frac{\sqrt{N}}{\log N}\right)$. We believe that this assumption can be weakened with a much more tightened bound, but for convenience of the reader of page 42, we believe that a less technical argument under this stronger assumption still conveys the same message of the result.

\begin{proof}[Proof of Lemma \ref{lem:minimal}]
In order to find the minimum of $\varphi_N $ on $\Delta_N^r$, let us recall the following properties that we proved in Section 4:
\begin{itemize}
    \item The only critical points of $\varphi_N$ having either all coordinates strictly positive or all coordinates strictly negative, are the vectors $\pm \overrightarrow m^*$, see the proof of Lemma \ref{LowTemp}.
    \item The only critical point that has a zero coordinate, is the zero vector and it is not a minimum according to Lemma \ref{definiteness}.
    \item For any point $x$ that has only positive or only negative coordinates and any subset $J \subset \{1, \ldots, s_N\}$ with $|J| \le \frac{s_N}{2}$, if we define $y$ having flipped coordinates
    \begin{flalign}\label{eq:flip}
        &y_k = x_k \quad \text{for all } k \notin J \nonumber \\
       & y_k = - x_k \quad \text{for all } k \in J ,
    \end{flalign}
    then the proof of Lemma \ref{SameSign} showed the strict energy gap 
\begin{align}\label{eq:energygapflip}
    \varphi_N(x) - \varphi_N(y) \le - 2\beta \sum_{j\in J}|x_j|.
\end{align}
\end{itemize} 
These observations together imply that $\inf_{x\in \Delta_N^r}  \varphi_N\left( \sqrt{\frac{s_N}{N}}x\right)$  must be attained on the boundary  $\partial\Delta_N^r$ or in the interior of flipped copies of boxes
\begin{align}\label{eq:flippedbox}
    \prod_{k=1}^{s_N} \left[\sqrt{\frac{N}{s_N}}( \eps_k m^*-r),\sqrt{\frac{N}{s_N}}( \eps_k m^*+r)  \right],
\end{align}
    for all choices of $\eps_1, \ldots, \eps_{s_N} \in \{-1,+1\}$.

Clearly, the minimum cannot be attained at $\partial B_{R\sqrt N}^c$ by Lemma \ref{lem:subgaussianLT}. On the boundary $\partial\left[\sqrt{\frac{N}{s_N}}( m^*-r),\sqrt{\frac{N}{s_N}}( m^*+r)  \right]^{s_N}$, we use the strict convexity in its interior. Indeed, we have shown in Lemma \ref{lem:log-concaveLT} that $\Hess\frac{N}{s_N} \left( \varphi_N\left(\overrightarrow m^* + \sqrt{\frac{s_N}{N}}x \right) - \varphi_N( \overrightarrow m^*)\right)$ is uniformly strictly positive definite for $x\in \left[-\sqrt{\frac{N}{s_N}}r, \sqrt{\frac{N}{s_N}}r \right]^{s_N}$. For the smallest singular value $\lambda_{\min}>0$  of the Hessian, it follows
\begin{align}
         \inf_{x\in \partial\left[\sqrt{\frac{N}{s_N}}( m^*-r),\sqrt{\frac{N}{s_N}}( m^*+r)  \right]^{s_N}}  \varphi_N\left( \sqrt{\frac{s_N}{N}}x\right) -\varphi_N(\overrightarrow m^*) \ge \lambda_{\min}r^2 .
\end{align}
    
It remains to control possible critical points within flipped boxes \eqref{eq:flippedbox}. 
For any $y$ as in \eqref{eq:flip} in such a flipped box, there is $x \in \left[ \sqrt{\frac{N}{s_N}}(m^* - r), \sqrt{\frac{N}{s_N}} (m^*+r)\right]^{s_N}$ and $J \subset \{1,\ldots, s_N\}$ such that 
\eqref{eq:energygapflip} gives
    $$
    \varphi_N\left( \sqrt{\frac{s_N}{N}}x\right) - \varphi_N\left( \sqrt{\frac{s_N}{N}}y\right) \le - \sum_{j\in J}2\beta \sqrt{\frac{s_N}{N}} x_j \le -  2\beta|J|(m^*-r)\le -  2\beta (m^*-r)<0.
    $$
Consequently,
    \begin{align*}
    \varphi_N\left( \sqrt{\frac{s_N}{N}}y\right) - \varphi_N(\overrightarrow m^*)   \ge  2 \beta (m^* - r) + \varphi_N\left( \sqrt{\frac{s_N}{N}}x\right) - \varphi_N(\overrightarrow m^*)  \ge   2 \beta (m^* - r)   .
    \end{align*}
The claim follows for $\rho= \min\{\lambda_{\min}r^2, 2\beta(m^* - r)\}$.
\end{proof}

\appendix
\section{Proof of Proposition \ref{Ising}}
The statement of Proposition \ref{Ising} follows from Theorem 8.39 in \cite{Georgii_book}. 
Fix $0<c<1$ such that $\frac{1}{c}\in \N$ and assume that $s_N=cN \in \N$ for all $N$. Then each block contains $\frac{1}{c}$ many spins and $m_k$ can take the values in $\mathcal{A}_c = \left\{-1 + 2cl \, : \, l = 0, \cdots, \frac{1}{c}\right\}$, where for every $l$,
$$
\P(m_k = 1- 2lc) = \P\left(m_k = -(1-2lc)\right) = \P\left(\{\sigma_1, \ldots, \sigma_{\frac{1}{c}}\}\text{ contains $l$ many -1-spins}\right) =\frac{{\frac{1}{c} +l-1 \choose l}}{2^\frac{1}{c}}.
$$
Then, $\mu_{N, \beta,\alpha}(m \in \cdot)$ behaves like the distribution of the magnetization under the Gibbs measure of a 
one-dimensional Ising model with nearest-neighbor interacting spins $m_1, \ldots, m_{cN}$ and a-priori distribution given by the above weight. In the language of \cite{Georgii_book} Gibbs measures can be described in terms of potentials, which are the building blocks of 
the energy function (see \cite[Section 2.1]{Georgii_book}). For our system, the potential is given by $\Phi = (\Phi_E)_{E\subset \N}$, where 
$$\Phi_E (\sigma) = \begin{cases}
  \frac{\alpha}{c}m_im_{i+1} + \frac{\beta}{2c}m_i^2 & \text{if } E=\{i, i+1\}\\    
  0 & \text{otherwise}.
\end{cases}
$$
According to Theorem 8.39 in \cite{Georgii_book}, the spin model does not exhibit a phase transition if
\begin{equation}\label{GeorgiiCondition}
    \sup_{i \in \{1, \ldots, s_N\}} \left(\sup_{x,y} |\Phi_{\{i-1,i\}}(x) - \Phi_{\{i-1,i\}}(y)| + \sup_{x,y} |\Phi_{\{i,i+1\}}(x) - \Phi_{\{i,i+1\}}(y)|\right) < \infty.
\end{equation}
Note that for any $i\in \{1, \ldots, s_N\}$ and any $x,y \in \mathcal{A}_c$
$$
|\Phi_{\{i-1,i\}}(x) - \Phi_{\{i-1,i\}}(y)| \le 2 \frac{\alpha}{c} + \frac{\beta}{2c},
$$
hence, the left hand side in (\ref{GeorgiiCondition}) is at most $2 \frac{\alpha}{c} + \frac{\beta}{c} < \infty$. Therefore, the potential $\Phi$ admits a unique infinite volume Gibbs measure $\mu_m$. The weak convergence in product topology is defined as the convergence of convergence of all finite-dimensional marginals. This follows from the DLR equations for $\mu_m$.  This is our first assertion.

For the last statement note that Georgii (see Chapter 7 in \cite{Georgii_book}, in particular Section 7.3) shows that in the setting of Ising type interactions on compact spaces the set of all Gibbs measures $\mathcal(\Phi)$ is a (Choquet) simplex with the ergodic measures as its extreme points. Hence, uniqueness implies ergodicity of $\mu_m$. By the ergodic theorem, the total magnetization
$\frac 1N \sum_{i=1}^N \sigma_i = \frac{1}{s_N}\sum_{k=1}^{s_N}m_k$ converges almost surely to a constant under $\mu_m$. Since our system is symmetric under spin-flip, i.e.\ 
$\Phi_E (\sigma)=\Phi_E (-\sigma)$ for all $\sigma$ and $E$, this limit can only be 0.
\qed
\section*{Acknowledgements} The authors thank Thomas Lebl\'e for valuable suggestions and stimulating discussions. IL and ML were supported by the German Research Foundation under Germany’s Excellence Strategy EXC 2044 - 390685587, Mathematics M\"unster: Dynamics - Geometry - Structure. JJ was supported by the DFG priority program SPP 2265 Random Geometric Systems.


\begin{thebibliography}{KLSS20}
	
	\bibitem[BD01]{BrockDurlauf}
	William~A. Brock and Steven~N. Durlauf.
	\newblock Discrete choice with social interactions.
	\newblock {\em Rev. Econom. Stud.}, 68(2):235--260, 2001.
	
	\bibitem[BL00]{bobkov:ledoux:2000}
	S.G. Bobkov and M.~Ledoux.
	\newblock From brunn-minkowski to brascamp-lieb and to logarithmic sobolev
	inequalities.
	\newblock {\em Geometric and Functional Analysis}, 10:1028--1052, 2000.
	
	\bibitem[BM17]{basak_muk}
	Anirban Basak and Sumit Mukherjee.
	\newblock Universality of the mean-field for the potts model.
	\newblock {\em Probability Theory and Related Fields}, 168(3):557--600, 2017.
	
	\bibitem[BR07]{BeRa07}
	Michele Benzi and NADER RAZOUK.
	\newblock Decay bounds and o(n) algorithms for approximating functions of
	sparse matrices.
	\newblock {\em ETNA. Electronic Transactions on Numerical Analysis [electronic
		only]}, 28, 01 2007.
	
	\bibitem[BRS19]{BRS19}
	Quentin Berthet, Philippe Rigollet, and Piyush Srivastava.
	\newblock Exact recovery in the {I}sing blockmodel.
	\newblock {\em Ann. Statist.}, 47(4):1805--1834, 2019.
	
	\bibitem[CL10]{CoLo10}
	Rama Cont and Matthias L\"{o}we.
	\newblock Social distance, heterogeneity and social interactions.
	\newblock {\em J. Math. Econom.}, 46(4):572--590, 2010.
	
	\bibitem[Col14]{Col14}
	Francesca Collet.
	\newblock Macroscopic limit of a bipartite {C}urie-{W}eiss model: a dynamical
	approach.
	\newblock {\em J. Stat. Phys.}, 157(6):1301--1319, 2014.
	
	\bibitem[CS11]{Chatterjee_Shao}
	Sourav Chatterjee and Qi-Man Shao.
	\newblock Nonnormal approximation by {S}tein's method of exchangeable pairs
	with application to the {C}urie-{W}eiss model.
	\newblock {\em Ann. Appl. Probab.}, 21(2):464--483, 2011.
	
	\bibitem[DM23]{deb_mukherjee}
	Nabarun Deb and Sumit Mukherjee.
	\newblock Fluctuations in mean-field ising models.
	\newblock {\em The Annals of Applied Probability}, 33(3):1961--2003, 2023.
	
	\bibitem[EL10]{EL10}
	Peter Eichelsbacher and Matthias L{\"o}we.
	\newblock Stein's method for dependent random variables occurring in
	statistical mechanics.
	\newblock {\em Electron. J. Probab.}, 15:no. 30, 962--988, 2010.
	
	\bibitem[Ell06]{EllisEntropyLargeDeviationsAndStatisticalMechanics}
	Richard~S. Ellis.
	\newblock {\em Entropy, large deviations, and statistical mechanics}.
	\newblock Classics in Mathematics. Springer-Verlag, Berlin, 2006.
	\newblock Reprint of the 1985 original.
	
	\bibitem[EN78a]{Ellis_Newman_78b}
	Richard~S. Ellis and Charles~M. Newman.
	\newblock Limit theorems for sums of dependent random variables occurring in
	statistical mechanics.
	\newblock {\em Z. Wahrsch. Verw. Gebiete}, 44(2):117--139, 1978.
	
	\bibitem[EN78b]{Ellis_Newman_78a}
	Richard~S. Ellis and Charles~M. Newman.
	\newblock The statistics of {C}urie-{W}eiss models.
	\newblock {\em J. Statist. Phys.}, 19(2):149--161, 1978.
	
	\bibitem[ENR80]{EllisNewman_80}
	Richard~S. Ellis, Charles~M. Newman, and Jay~S. Rosen.
	\newblock Limit theorems for sums of dependent random variables occurring in
	statistical mechanics. {II}. {C}onditioning, multiple phases, and
	metastability.
	\newblock {\em Z. Wahrsch. Verw. Gebiete}, 51(2):153--169, 1980.
	
	\bibitem[EW90]{EllisWang}
	Richard~S. Ellis and Kongming Wang.
	\newblock Limit theorems for the empirical vector of the
	{C}urie-{W}eiss-{P}otts model.
	\newblock {\em Stochastic Process. Appl.}, 35(1):59--79, 1990.
	
	\bibitem[FC11]{FC11}
	Micaela Fedele and Pierluigi Contucci.
	\newblock Scaling limits for multi-species statistical mechanics mean-field
	models.
	\newblock {\em J. Stat. Phys.}, 144(6):1186--1205, 2011.
	
	\bibitem[FKT22]{Fleermann+Kirsch+Toth:2022}
	M.~Fleermann, W.~Kirsch, and G.~Toth.
	\newblock Local central limit theorem for multi-group {C}urie-{W}eiss models.
	\newblock {\em J. Theoret. Probab.}, 35(3):2009--2019, 2022.
	
	\bibitem[FU12]{FU12}
	Micaela Fedele and Francesco Unguendoli.
	\newblock Rigorous results on the bipartite mean-field model.
	\newblock {\em J. Phys. A}, 45(38):385001, 18, 2012.
	
	\bibitem[FV18]{Velenik_book}
	S.~Friedli and Y.~Velenik.
	\newblock {\em Statistical mechanics of lattice systems}.
	\newblock Cambridge University Press, Cambridge, 2018.
	\newblock A concrete mathematical introduction.
	
	\bibitem[GBC09]{GBC09}
	Ignacio Gallo, Adriano Barra, and Pierluigi Contucci.
	\newblock Parameter evaluation of a simple mean-field model of social
	interaction.
	\newblock {\em Math. Models Methods Appl. Sci.}, 19(suppl.):1427--1439, 2009.
	
	\bibitem[GC08]{GC08}
	Ignacio Gallo and Pierluigi Contucci.
	\newblock Bipartite mean field spin systems. {E}xistence and solution.
	\newblock {\em Math. Phys. Electron. J.}, 14:Paper 1, 21, 2008.
	
	\bibitem[Geo11]{Georgii_book}
	Hans-Otto Georgii.
	\newblock {\em Gibbs measures and phase transitions}, volume~9 of {\em De
		Gruyter Studies in Mathematics}.
	\newblock Walter de Gruyter \& Co., Berlin, second edition, 2011.
	
	\bibitem[Isi25]{Ising25}
	Ernst Ising.
	\newblock Beitrag zur theorie des ferromagnetismus.
	\newblock {\em Zeitschrift für Physik}, 25, 02 1925.
	
	\bibitem[JLS21]{JLS}
	Jonas Jalowy, Matthias Löwe, and Holger Sambale.
	\newblock Fluctuations of the magnetization in the block potts model, 2021.
	
	\bibitem[Kim22]{kim2potts}
	Daecheol Kim.
	\newblock Energy landscape of the two-component curie--weiss--potts model with
	three spins.
	\newblock {\em Journal of Statistical Physics}, 188(2):15, 2022.
	
	\bibitem[KL09]{KnLo07}
	Holger Kn\"{o}pfel and Matthias L\"{o}we.
	\newblock Zur {M}einungsbildung in einer heterogenen {B}ev\"{o}lkerung---ein
	neuer {Z}ugang zum {H}opfield {M}odell.
	\newblock {\em Math. Semesterber.}, 56(1):15--38, 2009.
	
	\bibitem[KLSS20]{KLSS20}
	Holger Kn\"{o}pfel, Matthias L\"{o}we, Kristina Schubert, and Arthur Sinulis.
	\newblock Fluctuation results for general block spin {I}sing models.
	\newblock {\em J. Stat. Phys.}, 178(5):1175--1200, 2020.
	
	\bibitem[KT20]{KT20a}
	Werner Kirsch and Gabor Toth.
	\newblock Two groups in a {C}urie-{W}eiss model with heterogeneous coupling.
	\newblock {\em J. Theoret. Probab.}, 33(4):2001--2026, 2020.
	
	\bibitem[KT22]{KT22}
	Werner Kirsch and Gabor Toth.
	\newblock Limit theorems for multi-group curie–weiss models via the method of
	moments.
	\newblock {\em Mathematical Physics, Analysis and Geometry}, 25, 09 2022.
	
	\bibitem[LS18]{LSblock1}
	Matthias L\"owe and Kristina Schubert.
	\newblock Fluctuations for block spin {I}sing models.
	\newblock {\em Electron. Commun. Probab.}, 23:12 pp., 2018.
	
	\bibitem[LS20]{LS20}
	Matthias L\"{o}we and Kristina Schubert.
	\newblock Exact recovery in block spin {I}sing models at the critical line.
	\newblock {\em Electron. J. Stat.}, 14(1):1796--1815, 2020.
	
	\bibitem[LSV20]{LSV20}
	Matthias Löwe, Kristina Schubert, and Franck Vermet.
	\newblock Multi-group binary choice with social interaction and a random
	communication structure—a random graph approach.
	\newblock {\em Physica A: Statistical Mechanics and its Applications},
	556:124735, 05 2020.
	
	\bibitem[MSB21]{MSB21}
	Somabha Mukherjee, Jaesung Son, and Bhaswar~B. Bhattacharya.
	\newblock Fluctuations of the magnetization in the {$p$}-spin {C}urie-{W}eiss
	model.
	\newblock {\em Comm. Math. Phys.}, 387(2):681--728, 2021.
	
	\bibitem[MSB25]{MSB25}
	Somabha Mukherjee, Jaesung Son, and Bhaswar~B. Bhattacharya.
	\newblock Phase transitions of the maximum likelihood estimators in the
	{$p$}-spin {C}urie-{W}eiss model.
	\newblock {\em Bernoulli}, 31(2):1502--1526, 2025.
	
	\bibitem[Mun26]{munpotts}
	Kyunghoo Mun.
	\newblock Dynamical {P}hase {T}ransition for the homogeneous multi-component
	{C}urie-{W}eiss-{P}otts model.
	\newblock {\em J. Stat. Phys.}, 193(2):Paper No. 16, 2026.
	
	\bibitem[Pre09]{Presutti}
	Errico Presutti.
	\newblock {\em Scaling limits in statistical mechanics and microstructures in
		continuum mechanics}.
	\newblock Springer, 2009.
	
	\bibitem[Tho79]{Thompson}
	Colin~J. Thompson.
	\newblock {\em Mathematical statistical mechanics}.
	\newblock Princeton University Press, Princeton, NJ, 1979.
	\newblock Reprinting of the 1972 original.
	
	\bibitem[Ver18]{Vershynin}
	Roman Vershynin.
	\newblock {\em High-dimensional probability}, volume~47 of {\em Cambridge
		Series in Statistical and Probabilistic Mathematics}.
	\newblock Cambridge University Press, Cambridge, 2018.
	\newblock An introduction with applications in data science, With a foreword by
	Sara van de Geer.
	
\end{thebibliography}



\end{document}